\renewcommand{\div}{\operatorname{div}}
\newtheorem{thm}{Theorem}[section]}
\newtheorem{prop}[thm]{Proposition}}
\newtheorem{lemme}[thm]{Lemma}}
\newtheorem{rem}{Remark}[section]}
\begin{document}

\title[$L^p$ estimates for the homogenization of the Stokes problem]{$L^p$ estimates for the homogenization of Stokes problem in a perforated domain}

\author{ Amina Mecherbet \& Matthieu Hillairet} 
\address{Institut Montpelli\'erain Alexander Grothendieck, Universit\'e de Montpellier, Place Eug\`ene Bataillon, 34095 Montpellier Cedex 5 France}
\email{amina.mecherbet@umontpellier.fr, matthieu.hillairet@umontpellier.fr}
\date{\today}

\maketitle

\begin{abstract}
In this paper, we consider the Stokes equations in a perforated domain. When the number of holes increases while their radius tends to $0$, it is proven in \cite{Desvillettes}, under suitable dilution assumptions, that the solution is well-approximated asymptotically by solving a Stokes-Brinkman equation. We provide here quantitative estimates in $L^p$-norms of this convergence.
\end{abstract}
\section{Introduction}
Let $\Omega$ be a  connected smooth bounded domain in $\mathbb{R}^3.$ Given $N\in \mathbb N,$ we  consider $ \left ( B_i^N\right )_{i\in \{1,\cdots,N\} }$ a family of $N$ balls in $\mathbb{R}^3$ such that: 
$$B_i^N:=B\left (x_i^N,\frac{r_i^N}{N} \right ) \subset \Omega, \text{ for all }i \in \{1,\ldots, N\}.$$ 
Defining the perforated set $\mathcal{F}^N$ by  
$$
 \mathcal F^{N} = \Omega  \setminus \bigcup_{i=1}^{N} B_{i}^{N},
$$
we denote $(u^N,\pi^N) \in H^1(\mathcal F^N) \times L^2_0(\mathcal F^N)$ (here the subscript $0$ fixes that $\pi^N$ has mean
$0$ on $\mathcal F^N$)  the unique solution to the Stokes problem:
\begin{equation} \label{eq_stokesN}
\left\{
\begin{array}{rcl}
- \Delta u^N + \nabla \pi^N &=& 0,  \\
{\div} u^N &= & 0 ,
\end{array}
\right.
\quad \text{ on $\mathcal F^{N}$},
\end{equation}
completed with boundary condtions:
\begin{equation} \label{cab_stokesN}
\left\{
\begin{array}{rcll}
u^N(x) &=& V_i^N  , &  \text{on $\partial  B_i^{N}$} , \\
u^N(x) &=& 0 , & \text{on $\partial \Omega$},
\end{array}
\right.
\end{equation}
where $(V_i^{N})_{i=1,\ldots,N} \in (\mathbb R^3)^{N}$ are given.  In \cite{Desvillettes}, the authors show that, if $r_i^N =1$ uniformly, if the holes are sufficiently dilute and the empirical measures associated to the distributions of $(x_i^N,V_i^N)_{i=1,\ldots,N}$ converge to a sufficiently smooth particle distribution function $f(x,v) {\rm d}x {\rm d}v$, 
then the associated sequence of velocity-fields $(u^N)_{N\in\mathbb N}$ converges weakly to the velocity-field $\bar u$ of the unique solution $(\bar{u},\bar{\pi}) \in H^1(\Omega) \times L^2_0(\Omega)$ 
to the Stokes-Brinkman problem:
\begin{equation} \label{eq_brinkman_intro}
\left\{
\begin{array}{rcl}
 - \Delta \bar{u} + \nabla \bar{\pi} &=&  (j-\rho \bar{u}) ,  \\
{\div} \bar{u} &= & 0 ,
\end{array}
\right.
\quad \text{ on $\Omega$},
\end{equation}
completed with boundary condition:
\begin{equation}\label{cab_brinkman_intro}
\bar{u}=0 \: \text{ on $\partial \Omega$.}
\end{equation}
In \eqref{eq_brinkman_intro}, the flux $j$ and density $\rho$ are computed respectively to the given particle distribution function $f$ by:
$$ 
j(x)= 6\pi \int_{\mathbb R^3} v f(x,v) {\rm d}v \qquad \rho(x) = 6\pi \int_{\mathbb R^3} f(x,v) {\rm d}v, \quad \forall  x \in \Omega.
$$ 
We emphasize that here and below (in the definition of discrete densities and fluxes), 
we include the factor $6\pi$ in the formulas. This factor is reminiscent of the Stokes law for the resistance of a viscous fluid on a moving sphere (see next section).
Via  a standard compact-embedding argument, it entails from \cite{Desvillettes} that we have also  strong convergence
of the $u^N$ to $\bar{u}$ in $L^p$-spaces (for $p<6$) up to the extraction of a subsequence. We are interested herein in providing a quantitative estimate 
of the convergence of $u^N$ to $\bar{u}.$ 

\medskip

This problem is related to the homogenization of Stokes problem in perforated domains with homogeneous boundary conditions and a forcing term. In this case, previous studies prove convergence of the sequence of $N$-hole solutions  to the solution of the Stokes-Brinkman problem (or other ones depending on the dilution regime of the holes) in the periodic as in the random setting \cite{Allaire,BeliaevKozlov96,Rubinstein}.
These results extend to the Stokes problem  previous analysis for the Laplace equations \cite{CioranescuMurat}. The problem with non-homogeneous boundary conditions that we consider herein is introduced by \cite{Desvillettes} in a tentative to justify
a Vlasov-Navier-Stokes or Vlasov-Stokes problem that is applied in spray theory \cite{BDMG,Hamdache98}. The strategy here is to couple the Stokes problem \eqref{eq_stokesN}-\eqref{cab_stokesN} by prescribing that the holes are particles
whose position/velocity $(x_i^N,V_i^N)_{i=1,\ldots,N}$ evolve according to Newton laws:
\begin{eqnarray}
\dfrac{\rm d}{{\rm d}t} x_i^N &= & V_i^N,  \label{eq_ode1}\\
m \dfrac{\rm d}{{\rm d}t} V_i^N &=& - \int_{\partial B_i^N} \left( \nabla u + \nabla u^{\top} - p \mathbb I_3 \right) n {\rm d}\sigma. \label{eq_ode2}
 \end{eqnarray}
Here we denote by $m$ the mass of the particles and $n$ the normal to $\partial B_i^N$ directed toward $B_i^N.$ Note that, contrary to the stationary problem we are studying in this paper, in this target system the
holes/particles are moving. As classical in these "many-particle systems", one crucial issue to complete a rigorous derivation
is to control the distance between the particles. Partial improvements have been obtained in this direction either by increasing the family of datas for which transition from the $N$-hole stationary Stokes problem to the Stokes-Brinkman problem hold  \cite{hillairet} or by completing successfully the kinetic program for the  odes \eqref{eq_ode1}-\eqref{eq_ode2} with singular forcing terms \cite{HaurayJabin15}. 
In this paper, we do not tackle this issue on the distance between particles. Keeping in mind that, in the full problem, one wants to couple the dynamical equations for the particles with the pde governing the fluid problem, we infer that a quantitative description 
of the convergence of the $N$-hole solutions to the solutions to the Stokes-Brinkman problem is necessary. So, we discuss in which norms such quantitative estimates may be computed.

\medskip

We make precise now the main assumptions  that are in force throughout the paper:\medskip 
\begin{itemize}
\item the balls are sufficiently spaced:
\begin{equation} \label{hyp1}\tag{H1}
\text{$\exists\, C_0 >0$ independent of $i\neq j,N$ s.t. } {\rm dist}(B_i^{N} ,B_j^{N}) \geq \dfrac{C_0}{N^{\frac 13}},  \quad {\rm dist}(B_i^N,\partial \Omega) \geq \dfrac{C_0}{N^{\frac 13}}; 
\end{equation}

\vskip 8pt

\item the normalized radii $r_i^N>0$ are uniformly bounded: 
\begin{equation}\label{hyp2}\tag{H2}
\exists\, R_0 >0 \text{ independent of $i,N$ s.t. } r_i^N \leq R_0 ;  
\end{equation}

\vskip 8pt

\item the kinetic energies of the data are uniformly bounded: 
\begin{equation}\label{hyp3} \tag{H3}
\exists\,  E_0 >0 \text{ independent of $N$ such that }\dfrac{1}{N} \sum_{i=1}^{N} |V_i^N|^2 \leq |E_0|^2 .  
\end{equation}
\end{itemize}

Then, following \cite{Desvillettes} and \cite{hillairet} we introduce empirical measures to describe the asymptotic behavior of the distribution $(x_i^N, V_i^N, r_i^N)_{i=1,\dots,N} $: 
$$
{S}_N(x,v,r)  = \dfrac{1}{N} \sum_{i=1}^N \delta_{x^N_i,V^N_i,r_i^N}(x,v,r) \in \mathcal{P}(\mathbb{R}^3 \times \mathbb{R}^3 \times ]0,\infty[).
$$
We denote then by $\rho^N$ and $j^N$ its two first momentums: 
$$
\rho^N:= 6\pi \int_{\mathbb R^3 \times ]0,\infty[} S_N({\rm d}v  {\rm d}r),
 \qquad 
 j^N:= 6\pi \int_{\mathbb R^3 \times ]0,\infty[} v S_N({\rm d}v  {\rm d}r). 
$$
The sequence of densities $\rho^N$ (resp. fluxes $j^N$) are then measures (resp. vectorial measures)  on $\mathbb R^3$ with support in $\Omega.$  Compared to \cite{Desvillettes}, the main new assumptions is that the radii of the holes may depend on $i,N.$
We restrict to the dilution regime of this previous reference for simplicity though it is likely that the result extends to the one of \cite{hillairet}.

\medskip

With the above assumptions, for arbitrary $N\in \mathbb N,$ the domain $\mathcal F^N$ has a smooth boundary and there exists a solution to \eqref{eq_stokesN}-\eqref{cab_stokesN}  (see \cite[Section IV]{Galdi}). We have thus at-hand a sequence $(u^N,\pi^N) \in H^1(\mathcal F^N) \times L^2_0(\mathcal F^N)$.
Under assumption \eqref{hyp1}-\eqref{hyp2}-\eqref{hyp3} one may prove that up to the extraction of a subsequence $\rho^N$ (resp. $j^N$) converges to some 
density $\rho \in L^{\infty}(\Omega)$ (resp. flux $j \in L^2(\Omega)$).  We have then a unique solution $(\bar{u},\bar{\pi})$ to the Stokes-Brinkman problem \eqref{eq_brinkman_intro}-\eqref{cab_brinkman_intro} for this density/flux pair (see next section for more details). In order to compute the distance between $u^N$ and $\bar{u}$
we extend $u^N$ to the whole $\Omega$ by setting:
$$
E_\Omega[u^N]:= \Big \{ \begin{array}{lr}
        u^N, & \text{on } \mathcal{F}^N,\\[4pt]
        V_j^N, & \text{on } B_j^N. \\
        \end{array}
$$
Because of boundary conditions \eqref{cab_stokesN}, these extended velocity-fields satisfy $E_{\Omega}[u^N] \in H^1_0(\Omega).$
With these notations, we state now our two results on the convergence of the sequence $(E_{\Omega}[u^N])_{N \in \mathbb N}$ 
towards $\bar{u}.$ 
\begin{thm}\label{theorem1}
Assume that $j \in L^q(\Omega)$ for some  $q>3$ and $p \in ]1,\frac 32[.$ If $R_0 /C_0^3$ is sufficiently small, there exists a constant $K >0$ depending only on $R_0,C_0,p,q,\Omega$ for which:
$$
 \|E_\Omega[u^N]-\bar{u}\|_{L^p(\Omega)} \leq K\left[
\|j^N-j\|_{(\mathcal{C}^{0,1}(\bar{\Omega}))^*}+\| \rho^N -\rho \|_{(\mathcal{C}^{0,1}(\bar{\Omega}))^* } + \frac{\|{j}\|_{L^q(\Omega)} + E_0}{N^{1/3}}\right], 
$$
for $N \geq (4R_0/C_0)^{3/2}.$
\end{thm}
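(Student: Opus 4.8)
The plan is to estimate the $L^p$-norm by duality and to analyze the resulting pairing through an auxiliary (dual) Stokes problem. Since $E_\Omega[u^N]-\bar u\in H^1_0(\Omega)$ is divergence free, for $p'=p/(p-1)\in]3,\infty[$ one has $\|E_\Omega[u^N]-\bar u\|_{L^p(\Omega)}\le C\sup\big\{\int_\Omega(E_\Omega[u^N]-\bar u)\cdot g:\; \div g=0,\; \|g\|_{L^{p'}}\le1\big\}$, the gradient part of $g$ dropping out against a divergence-free element of $H^1_0$. For each admissible $g$ I would introduce $(w,q)$ solving $-\Delta w+\nabla q=g$, $\div w=0$ in $\Omega$, $w=0$ on $\partial\Omega$. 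The role of $p<\frac32$ is precisely that $L^{p'}$-regularity for Stokes gives $w\in W^{2,p'}(\Omega)\hookrightarrow\mathcal C^{0,1}(\bar\Omega)$ with $\|w\|_{\mathcal C^{0,1}}\le C\|g\|_{L^{p'}}\le C$; symmetrically, $q>3$ and $W^{2,q}$-regularity for Brinkman make $\bar u\in\mathcal C^{0,1}(\bar\Omega)$. These two Lipschitz bounds are exactly what will allow pairing against the measures $j^N-j$ and $\rho^N-\rho$ in the $(\mathcal C^{0,1}(\bar\Omega))^*$-norm.

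Next I would turn $\int_\Omega(E_\Omega[u^N]-\bar u)\cdot g$ into an exact identity. Testing the Brinkman system for $\bar u$ against $w$ (and using $\div w=0$, $w|_{\partial\Omega}=0$) yields $\int_\Omega\bar u\cdot g=\int_\Omega j\cdot w-\int_\Omega\rho\,\bar u\cdot w$. For $E_\Omega[u^N]$ I would integrate by parts on $\Omega$: since $\nabla E_\Omega[u^N]$ vanishes inside each ball and $w=0$ on $\partial\Omega$, one gets $\int_\Omega\nabla E_\Omega[u^N]:\nabla w=\int_{\mathcal F^N}\nabla u^N:\nabla w$, and the Stokes equations \eqref{eq_stokesN} on $\mathcal F^N$ reduce this to a sum of boundary tractions $\sum_i\int_{\partial B_i^N}\big(\Sigma(u^N,\pi^N)n_i\big)\cdot w\,{\rm d}\sigma$, where $\Sigma(u^N,\pi^N)=\nabla u^N+(\nabla u^N)^\top-\pi^N\mathbb I_3$ is the stress from \eqref{eq_ode2} and $n_i$ the normal. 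Everything here is exact.

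The heart of the proof—and the step I expect to be the main obstacle—is to replace each boundary traction by the Stokes drag law. Writing $a_i=r_i^N/N$, I would first use $w\in\mathcal C^{0,1}$ to pass to $\int_{\partial B_i^N}(\Sigma n_i)\cdot w\,{\rm d}\sigma=F_i^N\cdot w(x_i^N)+O\big(a_i\,\|w\|_{\mathcal C^{0,1}}\int_{\partial B_i^N}|\Sigma n_i|\big)$, with $F_i^N$ the net force of \eqref{eq_ode2}, and then prove the quantitative single-particle estimate $F_i^N=6\pi a_i\big(V_i^N-\bar u(x_i^N)\big)+(\text{error})$, i.e. that to leading order each particle feels the homogenized field $\bar u$ and exerts the corresponding drag. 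Making this uniform in $i$ with errors summable to $O(N^{-1/3})$ is where all the hypotheses enter: one compares $u^N$ near $B_i^N$ to the explicit single-sphere Stokeslet and controls the flow generated by the remaining particles by a method-of-reflections/interaction estimate, whose convergence requires the diluteness $R_0/C_0^3\ll1$; the separation \eqref{hyp1} together with $N\ge(4R_0/C_0)^{3/2}$ guarantees the balls sit well inside disjoint cells of size $\sim N^{-1/3}$, \eqref{hyp2} bounds the radii, and \eqref{hyp3} together with $\|\bar u\|_{\mathcal C^{0,1}}\lesssim\|j\|_{L^q}$ controls the velocity sums—this is how $E_0$ and $\|j\|_{L^q}$ surface in the remainder.

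Finally I would assemble the discrete-to-continuous comparison. The leading drag sum $\sum_i 6\pi a_i\big(V_i^N-\bar u(x_i^N)\big)\cdot w(x_i^N)$ is exactly the pairing of $w$, resp. $\bar u\cdot w$, against the empirical flux $j^N$, resp. density $\rho^N$; subtracting the identity for $\bar u$ from the second paragraph collapses the continuous contributions and leaves $\langle j^N-j,w\rangle-\langle\rho^N-\rho,\bar u\cdot w\rangle$. Since products of $\mathcal C^{0,1}$ functions are $\mathcal C^{0,1}$, these are bounded by $\|j^N-j\|_{(\mathcal C^{0,1}(\bar\Omega))^*}\|w\|_{\mathcal C^{0,1}}$ and $\|\rho^N-\rho\|_{(\mathcal C^{0,1}(\bar\Omega))^*}\|\bar u\cdot w\|_{\mathcal C^{0,1}}$, the latter controlled via the regularity of $\bar u$. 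Gathering the Lipschitz-approximation errors and the single-particle remainders into the $O(N^{-1/3})$ term weighted by $\|j\|_{L^q}+E_0$, and taking the supremum over $g$, produces the stated estimate.
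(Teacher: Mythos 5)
Your scaffolding---duality through a $W^{2,p'}$ dual Stokes problem, the exact integration by parts reducing $\int_\Omega \nabla(E_\Omega[u^N]-\bar u):\nabla w$ to boundary tractions, and the final discrete-to-continuum pairing against $j^N-j$ and $\rho^N-\rho$---matches the paper's outline. But the step you yourself call the heart of the proof is a genuine gap, not a deferrable lemma: you assert that the true hydrodynamic force exerted through $\partial B_i^N$ by the unknown flow $(u^N,\pi^N)$ satisfies $F_i^N=6\pi a_i\big(V_i^N-\bar u(x_i^N)\big)+e_i$ with $\sum_i|e_i|=O(N^{-1/3})$, to be obtained ``by a method-of-reflections/interaction estimate.'' That statement is essentially a strong local version of the homogenization theorem itself: it presupposes that the background field seen by particle $i$ (that is, $u^N$ minus its own contribution, evaluated near $x_i^N$) is quantitatively close to $\bar u(x_i^N)$, which is exactly the kind of information the theorem is meant to establish; proving it under only \eqref{hyp1}--\eqref{hyp3} with errors summable to $O(N^{-1/3})$ is a substantial piece of analysis that your proposal does not supply. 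A secondary unproven ingredient sits inside the same step: your Lipschitz-approximation error requires $\sum_i a_i\int_{\partial B_i^N}\big|\Sigma(u^N,\pi^N)n_i\big|\,{\rm d}\sigma=O(N^{-1/3})$, i.e. per-particle control of the total variation of the traction of $u^N$, which does not follow from the energy bound and would itself need local regularity estimates with constants uniform over the particle configuration.

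The paper circumvents precisely this difficulty by putting the drag-law approximation on the dual side. In each cell it solves the auxiliary Stokes problem \eqref{eq_stokes3} with \emph{constant} boundary datum $w(x_j^N)$, for which explicit annulus formulas yield the Stokes law with a quantitative rate (Lemma \ref{lemma1}); the tractions that get approximated are those of these explicit, known solutions, while the unknown $u^N$ enters only through its prescribed boundary values $V_j^N$ and its annulus averages $\bar u_j^N$, both controlled by the data and the uniform energy bound of Proposition \ref{prop_unifbound}. In that strategy the density term produces $\langle \Pi_N,u^N-\bar u\rangle\lesssim (R_0/C_0^3)\,\|w\|_{2,p'}\|u^N-\bar u\|_p$ (Proposition \ref{prop3}), and the smallness of $R_0/C_0^3$ is used solely to absorb this term into the left-hand side---not, as in your sketch, to make a reflection series converge. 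Note also that in your route the $\bar u(x_i^N)$ appearing in the drag law makes the final assembly look clean only because all dependence on $u^N$ has been pushed into the unproven errors $e_i$. To salvage your approach you would either have to carry out the full quantitative method of reflections (a considerably harder task, and a different theorem), or dualize the drag-law step as the paper does.
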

\begin{thm}\label{theorem2}
Given $p \in ]1,\frac 32[$ there exists $K>0$ depending only on $R_0,C_0,p,\|\rho\|_{L^{\infty}(\Omega)},\Omega$ for which:
$$
\|E_\Omega[u^N]-\bar{u}\|_{L^p(\Omega)}\leq K \left[ \|j-j^N\|_{(\mathcal{C}^{0,1}(\bar{\Omega}))^*}+ \left ( \|\rho-\rho^N \|_{(\mathcal{C}^{0,1}(\bar{\Omega}))^*} + \frac{1}{N^{1/3}}  \right )^{1/3}E_0 \right],
$$
for $N \geq (4R_0/C_0)^{3/2}.$
\end{thm}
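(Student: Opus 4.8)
The plan is to deduce Theorem 2 from Theorem 1 by a velocity-truncation argument that trades the unavailable integrability $j\in L^q$ for the kinetic bound \eqref{hyp3}. Writing $\|\cdot\|_*$ for $\|\cdot\|_{(\mathcal C^{0,1}(\bar\Omega))^*}$ and setting $\delta:=\|\rho-\rho^N\|_*+N^{-1/3}$, I would first dispose of the trivial regime: since $\rho,\rho^N$ have uniformly bounded total mass, $\delta$ is bounded by a universal constant, and when $\delta\gtrsim 1$ the right-hand side dominates $E_0$, which already controls $\|E_\Omega[u^N]-\bar u\|_{L^p}$; so I may assume $\delta\le 1$. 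Now fix a threshold $M\ge 1$ and split the indices into slow ones ($|V_i^N|\le M$) and fast ones $F_M:=\{i:|V_i^N|>M\}$. Let $u^{N,M}$ be the perforated-Stokes solution obtained by keeping the slow boundary velocities and setting the fast ones to $0$, with associated empirical flux $j^N_M$ and Brinkman limit $\bar u_M$ (same density $\rho$, truncated flux $j_M$). Since $|j_M|\le M\rho$ pointwise and $\rho\in L^\infty$, one has $\|j_M\|_{L^q(\Omega)}\le M\|\rho\|_{L^q(\Omega)}\lesssim M$ for every $q$; in particular $j_M\in L^q$ with $q>3$, so Theorem 1 applies to the truncated configuration.

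Next I would write the triangle inequality
$$\|E_\Omega[u^N]-\bar u\|_{L^p}\le \|E_\Omega[u^N]-E_\Omega[u^{N,M}]\|_{L^p}+\|E_\Omega[u^{N,M}]-\bar u_M\|_{L^p}+\|\bar u_M-\bar u\|_{L^p}.$$
The middle term is bounded by Theorem 1 applied to the truncated data: it is at most $K[\|j^N_M-j_M\|_*+\|\rho^N-\rho\|_*+(\|j_M\|_{L^q}+E_0)N^{-1/3}]\lesssim \|j^N_M-j_M\|_*+M\delta$, using $\|j_M\|_{L^q}\lesssim M$, the bound $M\ge1$ to write $\|\rho^N-\rho\|_*\le M\|\rho^N-\rho\|_*$, and (at the optimum below) $M\ge E_0$ to absorb $E_0N^{-1/3}$. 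For the two truncation errors I would invoke \eqref{hyp3}: by Chebyshev $|F_M|/N\le E_0^2/M^2$, while $\tfrac1N\sum_{i\in F_M}|V_i^N|\le E_0^2/M$, so that $\|j^N-j^N_M\|_*$, $\|j-j_M\|_*$ and the total variation $\|j-j_M\|_{\mathrm{TV}}$ are all $O(E_0^2/M)$. The continuous error $\|\bar u_M-\bar u\|_{L^p}$ is then controlled by the $L^p$-stability of Stokes--Brinkman against its source, i.e. convolution with the Oseen tensor $G\sim|x|^{-1}\in L^p_{\mathrm{loc}}$ for $p<3$, giving $\|\bar u_M-\bar u\|_{L^p}\lesssim\|j-j_M\|_{\mathrm{TV}}\lesssim E_0^2/M$. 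Combining, and bounding $\|j^N_M-j_M\|_*\le\|j^N-j\|_*+O(E_0^2/M)$, the slow plus continuous contributions are $\lesssim\|j^N-j\|_*+M\delta+E_0^2/M$.

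It then remains to estimate the discrete truncation error: the field $w:=u^N-u^{N,M}$ solves the perforated Stokes system with boundary velocity $V_i^N$ on the fast holes and $0$ elsewhere. Its contribution on the holes themselves is negligible (total volume $O(N^{-3})$), and on the fluid part the method of reflections represents $w$ to leading order as a superposition of Stokeslets of strengths $\sim (r_i^N/N)V_i^N$; since $\|G\|_{L^p(\Omega)}<\infty$ for $p<\tfrac32$, the triangle inequality gives $\|w\|_{L^p}\lesssim\sum_{i\in F_M}\tfrac{r_i^N}{N}|V_i^N|\lesssim \tfrac{R_0}{N}\sum_{i\in F_M}|V_i^N|\lesssim R_0\,E_0^2/M$. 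Assembling the three pieces yields $\|E_\Omega[u^N]-\bar u\|_{L^p}\lesssim \|j^N-j\|_*+M\delta+E_0^2/M$, and optimizing over $M\sim E_0\,\delta^{-1/2}$ (which indeed satisfies $M\ge E_0$ and $M\ge1$ since $\delta\le1$) produces $\|j^N-j\|_*+E_0\,\delta^{1/2}\le\|j^N-j\|_*+E_0\,\delta^{1/3}$, the announced estimate.

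I expect the genuine difficulty to lie precisely in the discrete truncation step. One cannot bound $\|w\|_{L^p}$ by a naive energy estimate: the $H^1$-dissipation of $w$ is of order $\tfrac1N\sum_{i\in F_M}|V_i^N|^2$, which does \emph{not} decay in $M$, so $H^1\hookrightarrow L^p$ is far too lossy. Controlling $w$ in $L^p$ with the sharp $O(E_0^2/M)$ rate forces one to re-enter the reflection/Stokeslet machinery already used to prove Theorem 1, rather than to use that theorem as a pure black box; likewise the Stokes--Brinkman $L^p$-stability against a source that is only a bounded measure must be established through the mapping properties of the Oseen tensor. Once these two stability statements are in hand, the splitting and the scalar optimization above are routine.
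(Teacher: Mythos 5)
Your proposal has a fatal structural flaw: it deduces Theorem \ref{theorem2} from Theorem \ref{theorem1}, but Theorem \ref{theorem1} is only available when $R_0/C_0^3$ is sufficiently small, whereas Theorem \ref{theorem2} carries no such restriction --- the paper stresses that the two results are complementary precisely on this point (``the second result is valid for arbitrary data''). Truncating the boundary velocities does not change the geometry: the holes, their radii and their separations (hence $R_0$ and $C_0$) are the same for the truncated configuration $u^{N,M}$, so applying Theorem \ref{theorem1} to bound $\|E_\Omega[u^{N,M}]-\bar u_M\|_{L^p}$ silently imports the smallness hypothesis. At best your argument proves Theorem \ref{theorem2} under the extra assumption that $R_0/C_0^3$ is small, which is not the statement. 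The smallness in Theorem \ref{theorem1} originates in Proposition \ref{prop3}, where a term $K_{p,\Omega}\frac{R_0}{C_0^3}\|u^N-\bar u\|_p$ must be absorbed into the left-hand side; the paper's proof of Theorem \ref{theorem2} avoids creating such a term altogether by testing against the Stokes--Brinkman dual norm of Lemma \ref{lemma_secondaire}: the identity \eqref{eq_brinkmanfinale} replaces the problematic quantity $\sum_j F_j^N\cdot\bar u_j^N-\int\rho\,\bar u\cdot w$ by $\sum_j F_j^N\cdot\bar u_j^N-\int\rho\,E_\Omega[u^N]\cdot w$, which is then estimated in Proposition \ref{prop5} by mollifying $E_\Omega[u^N]$ at scale $\delta$ and optimizing $\delta=(\|\rho-\rho^N\|_{(\mathcal C^{0,1}(\bar\Omega))^*}+N^{-1/3})^{1/3}$ --- this is where the cube root comes from, and it uses only the uniform energy bound of Proposition \ref{prop_unifbound}, no regularity of $\bar u$ and no absorption.

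The same issue undermines your second key step, which you yourself flag as the ``genuine difficulty'': the bound $\|E_\Omega[u^N]-E_\Omega[u^{N,M}]\|_{L^p}\lesssim \frac{1}{N}\sum_{i\in F_M}r_i^N|V_i^N|$ via a Stokeslet superposition. In this dilution regime the interactions between particles are $O(1)$ --- each hole of radius $\sim 1/N$ contributes a Stokeslet of amplitude $\sim 1/N$ at distances $\sim N^{-1/3}$, and the sum over $N$ particles is of order one; this is exactly why a Brinkman term survives in the limit. Consequently the method of reflections does not converge by naive arguments without a smallness condition of the same type as in Theorem \ref{theorem1}, and the claimed $L^p$ bound on the difference field is an unproved statement of essentially the same depth as the theorem you are trying to establish. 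The remaining ingredients of your scheme (Chebyshev truncation of the velocities, the $L^1\to L^p$ stability of Stokes--Brinkman by duality through Theorem \ref{reg_brinkman}, the normalization $E_0=1$ needed to make the optimization in $M$ legitimate in all regimes) are sound, but they cannot compensate for these two gaps, which both trace back to the fact that Theorem \ref{theorem2} must be proved by a mechanism that never requires $R_0/C_0^3$ to be small.
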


The two previous theorems give a quantitative estimate of the weak-convergence obtained in \cite{Desvillettes}. They link the convergence of the sequence $(u^N)_{N\in \mathbb N}$ to $\bar{u}$ to the convergence of the fluxes and densities in the so-called bounded-lipschitz or Fortet-Mourier distance (see \cite[Section 6]{Villani}). As the $(\rho^N)_{N\in \mathbb N}$ are positive measures on $\Omega$ with the same finite mass, we may relate the bounded-lipschitz distance $\|\rho-\rho^N \|_{(\mathcal{C}^{0,1}(\bar{\Omega}))^*}$ to the Wasserstein distance between $\rho^N$ and $\rho$ thanks to the Kantorovich-Rubinstein formula \cite[Theorem 5.10]{Villani}.
The restriction on the values $N$ is irrelevant as our aim is to describe the asymptotics $N \to \infty$ of $u^N.$ It is due to the fact that our method requires
that $B(x_j^N,r^N_j/N) \subset B(x_j^N,C_0/4N^{1/3})$ for arbitrary $j\in \{1,\ldots,N\}.$ 

\medskip

The results we state are complementary one to the other.
The first one is limited to sufficiently small ratios $R_0/C_0^3$. This can be interpreted as configurations for which the holes are sufficiently small compared to their relative distances. In this case, the convergence estimate  is linear with respect to the convergence of the data $\rho^N$ and $j^N$.  The second result is valid for
arbitrary data. The counterpart is that the convergence estimate is now sublinear with respect to the convergence of the  densities $\rho^N.$ These results can be
extended in several directions. First, we may interpolate these convergences with crude uniform bounds on $E_{\Omega}[u^N]$ in $L^6(\Omega)$ to extend
the convergence to $L^p$ spaces with $p \geq 3/2$. But we can also generalize the result by considering convergence of the empirical measures in more general 
dual spaces. We comment at the end of the paper on the estimates we can attain with this method.

\medskip

The outline of the paper is as follows. In the next section, we state and prove some technical lemmas on the resolution of the Stokes problem and Stokes-Brinkman
problem. In particular, we state a regularity lemma in negative Sobolev spaces which is at the heart of our computations. Section \ref{sec_proofs} is devoted to the proof
of our main results and we provide a discussion on the possible extensions of our results in a closing section. 

\medskip

We list below some possible non-standard notations that we use during the proofs. First, we use extensively localizing procedures around
the balls $B_j^N$ so that we use repeatedly the shortcut $A(x,r_{int},r_{ext})$ for the annulus with center $x$ and internal (resp. external) radius $r_{int}$ (resp. $r_{ext}$). We also use the notations  $\oint_A u$ for the mean of $u$ on the set of positive measure $A$:
$$
\oint_A u(x){\rm d}x = \dfrac{1}{|A|} \int_{A} u(x){\rm d}x.
$$ 
We denote classically $L^p(\Omega)$ (resp. $W^{m,p}(\Omega)$ or $H^m(\Omega)$) Lebesgue spaces (resp. Sobolev spaces) on $\Omega.$ The index zero 
specifies zero mean when added to Lebesgue spaces and vanishing boundary-values when added to Sobolev spaces. For instance, we denote:
$$
L^2_0(\Omega):= \left\{  v\in L^2(\Omega), \oint_\Omega v =0 \right\} , \quad D_0(\Omega):= \left\{ v \in [H^1_0(\Omega)]^3, \div v = 0 \right\}.
$$
When there is no ambiguity concerning the definition domain, we only use exponents to denote norms: 
$$ \|\cdot \|_q:= \|\cdot\|_{L^q(\Omega)} , \quad \|\cdot \|_{m,q}:= \|\cdot\|_{W^{m,q}(\Omega)}.
$$
Given $\alpha \in (0,1]$ and $\Omega \subset \mathbb R^3,$   we also introduce ${\mathcal C}^{0,\alpha}(\bar{\Omega}),$ the set of $\alpha$-H\"older continuous functions on $\bar{\Omega}.$ When $\Omega$ is bounded, this is a Banach space endowed with the norm:
$$
\|f\|_{\mathcal C^{0,\alpha}(\bar{\Omega})} = \|f\|_{\infty} + \sup_{x \neq y} \dfrac{|f(x) - f(y)|}{|x-y|^{\alpha}}.
$$

\medskip

Given an arbitrary smooth domain $\Omega$ and $q \in (1,\infty),$ we denote $\mathfrak B: L^q_0(\Omega) \to W^{1,q}_0(\Omega)$ the so-called Bogovskii operator (see \cite[Section III.3]{Galdi}). It is a continuous linear map which, given $f \in L^q_0(\Omega)$ provides a solution $u$ to the problem:
$$
\left\{
\begin{array}{rcll}
{\rm div}  u &=& f, & \text{ on $\Omega,$}\\
u& =& 0, & \text{ on $\partial \Omega.$}
\end{array}
\right.
$$
If $\Omega= A(x_0,r_{int},r_{ext}),$ we specify the Bogovskii operator by indices: $\mathfrak B_{x_0,r_{int},r_{ext}}.$ 
Such operators have been extensively studied in \cite{Allaire}. The main results we apply here are summarized in \cite[Appendix A]{hillairet}.

\medskip

Finally, in the whole paper we use the symbol $\lesssim$ to express an inequality with a multiplicative constant depending on irrelevant parameters.

\section{Preliminary results on the  Stokes and Stokes-Brinkman equations}

In this section, we prove some lemmas concerning the resolution of the Stokes and Stokes-Brinkman equations
that will help in the the proofs of our main results. 

\subsection{{Analysis of the Stokes-Brinkman equation in a bounded domain}}


In this whole part $\Omega$ is a fixed smooth bounded domain. Given a boundary condition $u^* \in H^{1/2}(\Omega)$ and 
$\rho \in L^{\infty}(\Omega),$ we consider the Stokes-Brinkman problem: 
\begin{equation} \label{eq_brinkman1}
\left\{
\begin{array}{rcl}
 \rho u - \Delta u + \nabla \pi &=&  j,  \\
{\div} u &= & 0 ,
\end{array}
\right.
\quad \text{ on $\Omega$},
\end{equation}
completed with boundary condition:
\begin{equation}\label{cab_brinkman}
u=u^* \: \text{ on $\Omega$.}
\end{equation}

We assume below that $\rho \geq 0$ including possibly $\rho=0.$ In this latter case, the Stokes-Brinkman equations degenerate into the Stokes equations. 
We refer the reader to \cite[Section IV]{Galdi} for a comprehensive study of Stokes equations. Herein, we also apply the variational characterization of solutions that is provided in \cite[Section 2]{hillairet}. It is straightforward to extend the existence theory of these references to the Stokes-Brinkman equations with an arbitrary bounded weight $\rho \geq0$ yielding the following theorem:




\begin{thm}\label{theorem_existence}
Let $j \in L^{6/5}(\Omega;\mathbb{R}^3)$ and $\rho \in L^\infty(\Omega)$ such that $\rho \geq 0$. Given $u^*\in H^{1/2}(\Omega)$ satisfying:
$$
\int_{\partial \Omega} u^* \cdot n {\rm d}\sigma = 0,
$$
the following equivalent statements hold true and furnish a solution to  \eqref{eq_brinkman1}-\eqref{cab_brinkman}:
\begin{enumerate}
\renewcommand{\labelenumi}{\roman{enumi})}
\item There exists a unique pair $(u,\pi) \in H^1(\Omega)\times L^2_0(\Omega)$ satisfying \eqref{eq_brinkman1} in the sense of $\mathcal D'(\Omega)$
and \eqref{cab_brinkman} in the sense of traces;\\
\item There exists a unique divergence-free $u \in H^1(\Omega)$ satisfying   \eqref{cab_brinkman} in the sense of traces and: 
\begin{equation} \label{ff_brinkman}
\int_{\Omega} \nabla u: \nabla v =  \int_{\Omega} (j-\rho  u ) \cdot v ,\quad \text{ for all $v \in D_0(\Omega)$};
\end{equation}
\item if we assume furthermore that $j=0,$ there exists a unique solution to the minimisation problem:
\begin{equation}\label{formu_varia}
\inf \left \{ \frac{1}{2} \int_\Omega |\nabla v |^2+  \rho |v|^2  ,  v \in [H^1(\Omega)]^3, \div v =0, v = u^* \text{ on } \partial \Omega \right \}.
\end{equation} 
\end{enumerate}
\end{thm}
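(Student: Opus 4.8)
The plan is to follow the classical variational treatment of the Stokes problem, as in \cite[Section IV]{Galdi} and \cite[Section 2]{hillairet}, and simply to check that the zeroth-order term $\rho u$ with $\rho \geq 0$ does not spoil any of the arguments. First I would reduce to homogeneous boundary data: since $\int_{\partial \Omega} u^* \cdot n\, {\rm d}\sigma = 0$, there is a divergence-free lifting $U^* \in H^1(\Omega)$ with $U^* = u^*$ on $\partial\Omega$. Such a $U^*$ is obtained by taking any $H^1$-extension $\tilde U$ of $u^*$ (trace theorem), observing that $\div \tilde U \in L^2_0(\Omega)$ by the flux condition, and correcting by $\mathfrak B(\div \tilde U) \in W^{1,2}_0(\Omega)$. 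Writing $u = w + U^*$ reduces all three statements to statements about $w \in D_0(\Omega)$.

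For (ii), I would set up on the Hilbert space $D_0(\Omega)$ the bilinear form $a(w,v) := \int_\Omega \nabla w : \nabla v + \rho\, w\cdot v$ and the linear functional $\ell(v) := \int_\Omega (j - \rho U^*)\cdot v - \int_\Omega \nabla U^* : \nabla v$. The key points are that $a$ is continuous (using $\rho \in L^\infty$) and, crucially, coercive: since $a(w,w) = \|\nabla w\|_2^2 + \int_\Omega \rho |w|^2 \geq \|\nabla w\|_2^2$, the sign assumption $\rho \geq 0$ makes the zeroth-order term a harmless nonnegative addition, and the Poincar\'e inequality on $D_0(\Omega)\subset [H^1_0(\Omega)]^3$ upgrades this to coercivity in the full $H^1$-norm. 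The functional $\ell$ is bounded because $j \in L^{6/5}$ pairs with $v\in L^6$ through the embedding $H^1_0 \hookrightarrow L^6$, the remaining terms involving only $L^2$ data. Lax--Milgram (or the Riesz theorem, as $a$ is symmetric) then yields a unique $w$, hence a unique divergence-free $u$ satisfying \eqref{ff_brinkman}.

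To pass between (ii) and (i) I would invoke the pressure-recovery machinery of Stokes theory. Given the solution from (ii), the functional $v \mapsto \int_\Omega \nabla u : \nabla v - \int_\Omega (j-\rho u)\cdot v$ on $[H^1_0(\Omega)]^3$ annihilates every divergence-free test field; by de Rham's theorem in the quantitative form furnished by the Ne\v{c}as inequality (see \cite[Section III]{Galdi}) it is represented by a unique $\pi \in L^2_0(\Omega)$ via $v \mapsto \int_\Omega \pi\, \div v$, which is exactly \eqref{eq_brinkman1} in $\mathcal D'(\Omega)$. Conversely, testing \eqref{eq_brinkman1} against $v \in D_0(\Omega)$ eliminates the pressure and returns \eqref{ff_brinkman}, the mean-zero normalization pinning $\pi$ uniquely. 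Finally, for (iii) with $j=0$, I would note that $a$ is symmetric, so \eqref{ff_brinkman} (after the shift by $U^*$) is precisely the Euler--Lagrange equation of the strictly convex, coercive functional $v \mapsto \frac12 \int_\Omega |\nabla v|^2 + \rho |v|^2$ over the affine, weakly closed constraint set $\{v \in [H^1(\Omega)]^3 : \div v = 0,\ v = u^* \text{ on } \partial\Omega\}$; standard convex analysis identifies its unique minimizer with the solution of (ii).

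I expect the only genuine work, as opposed to transcription from the cited references, to be the verification that coercivity and the de Rham construction survive the addition of $\rho u$. Because $\rho \geq 0$ is bounded, this addition is a monotone, continuous perturbation that endangers neither ingredient, which is exactly why the extension from the pure Stokes case is routine.
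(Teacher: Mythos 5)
Your proposal is correct and coincides with the argument the paper intends: the paper's own ``proof'' consists of citing \cite[Section IV]{Galdi} and \cite[Section 2]{hillairet} and declaring the extension to a bounded weight $\rho \geq 0$ straightforward, and your write-up (divergence-free lifting of $u^*$ via the Bogovskii correction, Lax--Milgram for the coercive symmetric form $a$, de Rham/Ne\v{c}as recovery of the pressure in $L^2_0(\Omega)$, and convex minimization for the case $j=0$) is exactly that straightforward extension spelled out. The hypotheses $\rho \in L^\infty$, $\rho \geq 0$ and $j \in L^{6/5}$ enter precisely where you use them, so nothing further is needed.
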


The proof of this theorem is  a straightforward extension of \cite[Section IV]{Galdi} and \cite[Section 2]{hillairet} and is left to the reader.

\medskip

As stated in  \cite[Theorem IV.6.1]{Galdi}, in the case $\rho=0$ and $u^*=0$ we have also that, if $j \in W^{m,p}(\Omega)$ for some $m \in \mathbb N$ and $p \in (1,\infty)$
then the solution $u$ satisfies $u \in W^{m+2,p}(\Omega).$  We may  extend this regularity statement to our Stokes-Brinkman problem:
\begin{thm}\label{reg_brinkman}
Let $\rho \in L^\infty(\Omega)$ such that $\rho \geq 0$ and assume that $u^*=0,$ $j\in L^q(\Omega)$, for some $q \in [6/5,\infty)$. Then, there exists a unique pair $(u,\pi) \in W^{2,q}(\Omega)\times W^{1,q}(\Omega)$ satisfying \eqref{eq_brinkman1}-\eqref{cab_brinkman}. Moreover, there exists $C=C(\Omega,q,\|\rho\|_{\infty}) >0$ such that: 
$$ \|u\|_{2,q} \leq C \|j\|_{q}.
$$
\end{thm}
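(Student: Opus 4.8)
The plan is to treat the zeroth-order Brinkman term $\rho u$ as part of the right-hand side and to bootstrap the $L^p$ regularity theory for the pure Stokes system. First I would invoke Theorem~\ref{theorem_existence}: since $\Omega$ is bounded and $q \geq 6/5$, we have $j \in L^q(\Omega) \hookrightarrow L^{6/5}(\Omega)$, so there exists a unique weak solution $u \in D_0(\Omega)$ of \eqref{ff_brinkman}. Testing the weak formulation against $u$ itself and using Poincar\'e together with the three-dimensional Sobolev embedding $H^1_0(\Omega) \hookrightarrow L^6(\Omega)$ gives the energy bound $\|u\|_{H^1} \lesssim \|j\|_{6/5} \lesssim \|j\|_q$, the last step again by boundedness of $\Omega$. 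In particular $\|u\|_6 \lesssim \|j\|_q$.

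Next I would rewrite the system as a homogeneous Stokes problem $-\Delta u + \nabla \pi = J$, $\div u = 0$ in $\Omega$, $u = 0$ on $\partial \Omega$, with source $J := j - \rho u$, and apply the regularity statement of \cite[Theorem IV.6.1]{Galdi}, which asserts that $J \in L^s(\Omega)$ implies $(u,\pi) \in W^{2,s}(\Omega) \times W^{1,s}(\Omega)$ with $\|u\|_{2,s} \leq C(\Omega,s)\|J\|_s$. From $u \in L^6$ and $\rho \in L^\infty$ we get $\rho u \in L^6$, hence $J \in L^{\min(q,6)}(\Omega)$, and the bootstrap then splits into two cases. If $q \leq 6$, then $J \in L^q$ directly and one application yields $u \in W^{2,q}$ with $\|u\|_{2,q} \leq C(\Omega,q)\bigl(\|j\|_q + \|\rho\|_\infty \|u\|_q\bigr)$; since $\|u\|_q \lesssim \|u\|_6 \lesssim \|j\|_q$, the claimed bound follows. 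If $q > 6$, the first application gives $u \in W^{2,6}(\Omega)$, and because $2 \cdot 6 > 3$ the embedding $W^{2,6}(\Omega) \hookrightarrow L^\infty(\Omega)$ yields $\rho u \in L^\infty(\Omega) \subset L^q(\Omega)$, so that $J \in L^q$; a second application then gives $u \in W^{2,q}$, with $\|u\|_\infty \lesssim \|u\|_{2,6} \lesssim \|j\|_q$ controlling the feedback term.

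In both cases the bootstrap closes after at most two steps, the intermediate exponents being fixed by $q$ alone, so that the final constant depends only on $\Omega$, $q$ and $\|\rho\|_\infty$, as claimed. Uniqueness in $W^{2,q}(\Omega) \times W^{1,q}(\Omega)$ is inherited from the uniqueness of the $H^1(\Omega) \times L^2_0(\Omega)$ weak solution furnished by Theorem~\ref{theorem_existence}(i), since the regular pair we construct is in particular such a weak solution.

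The main obstacle is the bookkeeping of the constants through the bootstrap: each application of the Stokes estimate controls $\|u\|_{2,s}$ only in terms of $\|J\|_s \leq \|j\|_s + \|\rho\|_\infty \|u\|_s$, and one must feed back the lower-order norm of $u$ using the bounds already established and ultimately anchored to the energy estimate $\|u\|_{H^1} \lesssim \|j\|_q$, so that the right-hand side is finally controlled by $\|j\|_q$ with a constant independent of $u$. The only dimension-specific inputs are the three-dimensional embeddings $H^1_0(\Omega) \hookrightarrow L^6(\Omega)$ and $W^{2,6}(\Omega) \hookrightarrow L^\infty(\Omega)$, which are precisely what guarantee that the iteration terminates in finitely many (here at most two) steps.
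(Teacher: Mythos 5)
Your proof is correct and follows essentially the same route as the paper's: existence and an energy estimate from Theorem~\ref{theorem_existence}, then treating $\rho u$ as a source and bootstrapping the Stokes regularity theory of \cite[Theorem IV.6.1]{Galdi}, with the same case split at $q=6$. The only cosmetic differences are that you pair $j$ with $u$ in $L^{6/5}$--$L^6$ duality rather than $L^q$--$L^{q'}$, and you use $W^{2,6}(\Omega)\hookrightarrow L^\infty(\Omega)$ where the paper invokes $W^{2,6}(\Omega)\subset W^{1,\infty}(\Omega)$; both variants close the argument identically.
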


\begin{proof}  
Because $\Omega$ is bounded and $q\geq 6/5$ we have that $j\in L^{6/5}(\Omega)$. Theorem \ref{theorem_existence} yields the existence and uniqueness of the solution $(u,\pi) \in H^1(\Omega) \times L^2_0(\Omega)$. 
We recall that we focus on homogenous boundary conditions. In this case $u \in H^1_0(\Omega)$ so that Poincar\'e inequality entails that
$
\|u\|_{1,2} \lesssim \|\nabla u\|_{2}. 
$

\medskip

At first, let assume further that $q \leq 6.$ Because $H^1_0(\Omega) \subset L^q(\Omega),$ we remark that $(u,\pi)$ satisfies the Stokes equation with data $f= j-\rho \, u \in L^q(\Omega)$. 
The regularity theorem for Stokes equations implies that $(u,\pi) \in W^{2,q}(\Omega) \times W^{1,q}(\Omega)$ with:
$$\|u\|_{2,q} \leq C \|j-\rho u \|_{q} \leq C( \|j\|_q+ \|\rho\|_\infty \|u\|_q)
$$
for some positive constant $C>0$ depending only on $\Omega$ and $q.$  
Thus, we want to bound $\|u\|_q$ by $\|j\|_q$. To this end, we apply the weak-formulation of Stokes-Brinkman problem \eqref{ff_brinkman} with $v=u \in D_0(\Omega)$ to get that: 
\begin{align*}
\int_\Omega |\nabla u |^2 &=  \int_\Omega j\cdot u -  \int_\Omega \rho  |u|^2\\
& \leq  \|j\|_q  \|u\|_{q'} \\
& \lesssim  \|j\|_q  \| \nabla u \|_2
\end{align*}
where we applied again the embedding $H^1_0(\Omega) \subset L^{q'}(\Omega)$ since $q \geq 6/5.$ This entails that $\|u\|_{q} \lesssim  \|u \|_{1,2}\lesssim \|j\|_2$ and concludes the proof.

\medskip 

If we assume now $q>6$ we iterate the same argument. Indeed, because $\Omega$ is bounded,  we have in particular that $j \in L^6(\Omega)$ so that the previous reasoning applies yielding:
$$
\|u\|_{2,6} \leq C \|j\|_{L^6(\Omega)} \leq C' \|j\|_{L^q(\Omega)}.
$$
We may then apply the continuous embedding $W^{2,6}(\Omega) \subset W^{1,\infty}(\Omega).$  Hence, we obtain now again that $j- \rho  u \in L^q(\Omega)$ and we conclude by
application of the regularity theorem for Stokes equations as previously. 

\end{proof}



 
Keeping in mind that we want to compare the $N$-solution $E_{\Omega}[u^N]$ with $\bar{u}$ on $\Omega,$ we do not expect to be able to use a regular theory for the Stokes (or Stokes-Brinkman) equations
as above. Indeed, the $u^N$ are solutions to the Stokes equations on $\mathcal F^N$ only. Even if we were extending the pressure $\pi^N$ to $E_{\Omega}[\pi^N]$ by fixing a constant on the $B(x_i^N,r_i^N/N)$ (say $0$ for instance),  we expect that $\Delta E_{\Omega}[u^N] - \nabla E_{\Omega} [\pi^N]$ contains single layer distributions on the interfaces fluid/holes. Fortunately, these single layer distributions are  regular enough to 
compute $L^p$-estimates as depicted below.  These $L^p$-estimates are adapted from weak-regularity statements for stationary Stokes equations that have been obtained in the study of fluid-structure interaction problems \cite[Appendix 1]{Raymond07}.

\medskip

Given $p \in ]1,6[,$ we introduce the following norm of  $v \in H^1_0(\Omega)$: 
$$
[v]_{p,\Omega}:= \sup \left\{ \left|\int_\Omega \nabla v: \nabla w \right|, \; w \in W^{2,p'}(\Omega)\cap W^{1,p'}_0(\Omega), \; \div w=0, \; \|w\|_{2,p'}=1 \right\}.
$$
We have then:
\begin{lemme}\label{lemma_principal}
Let $p\in ]1,6[$. There exists a non-negative $C=C(\Omega,p)$ such that: $$\|v\|_{p} \leq C [v]_{p,\Omega} ,$$
for all divergence-free $v \in H^1_0(\Omega).$
\end{lemme}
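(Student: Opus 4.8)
The plan is to argue by $L^p$--$L^{p'}$ duality. Since $\|v\|_p = \sup\{ \int_\Omega v\cdot g : g \in L^{p'}(\Omega),\ \|g\|_{p'}\le 1\}$, it suffices to bound $|\int_\Omega v \cdot g|$ by $C[v]_{p,\Omega}$ uniformly over all such $g$. The idea is to convert the pairing $\int_\Omega v\cdot g$ into a Dirichlet pairing $\int_\Omega \nabla v:\nabla w$ against an admissible divergence-free test field $w$, so that the defining supremum of $[v]_{p,\Omega}$ applies directly.

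First I would solve an auxiliary (dual) Stokes problem. Fix $g\in L^{p'}(\Omega)$ with $\|g\|_{p'}\le 1$. As $p\in\,]1,6[$ we have $p'\in\,]6/5,\infty[\subset[6/5,\infty)$, so Theorem \ref{reg_brinkman} applied with $\rho=0$, $u^*=0$ and right-hand side $g$ furnishes a unique pair $(w,\pi)\in W^{2,p'}(\Omega)\times W^{1,p'}(\Omega)$ solving $-\Delta w+\nabla \pi=g$, $\div w=0$ on $\Omega$, $w=0$ on $\partial\Omega$, together with the bound $\|w\|_{2,p'}\le C\|g\|_{p'}\le C$ with $C=C(\Omega,p)$. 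Because $w$ vanishes on $\partial\Omega$ and is divergence-free, $w\in W^{2,p'}(\Omega)\cap W^{1,p'}_0(\Omega)$ with $\div w=0$; after normalization by $\|w\|_{2,p'}$ it is exactly an admissible competitor in the definition of $[v]_{p,\Omega}$.

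Next I would identify the pairing through integration by parts. Testing the dual equation against $v\in H^1_0(\Omega)$ with $\div v=0$, the boundary terms vanish since $v=0$ on $\partial\Omega$: one gets $\int_\Omega v\cdot(-\Delta w)=\int_\Omega \nabla v:\nabla w$ and $\int_\Omega v\cdot\nabla \pi=-\int_\Omega(\div v)\pi=0$, whence $\int_\Omega v\cdot g=\int_\Omega\nabla v:\nabla w$. This pairing is well defined because $\nabla v\in L^2$ while $\nabla w\in W^{1,p'}(\Omega)\subset L^2(\Omega)$ by Sobolev embedding, using precisely $p'\ge 6/5$. By the definition of $[v]_{p,\Omega}$ (applied to $w/\|w\|_{2,p'}$, the inequality being trivial when $w=0$) I then obtain $|\int_\Omega\nabla v:\nabla w|\le \|w\|_{2,p'}[v]_{p,\Omega}\le C[v]_{p,\Omega}$.

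Combining the two identities gives $|\int_\Omega v\cdot g|\le C[v]_{p,\Omega}$ for every $g$ with $\|g\|_{p'}\le1$, and taking the supremum over $g$ yields $\|v\|_p\le C[v]_{p,\Omega}$. The one genuinely substantial ingredient is the global $L^{p'}$ regularity of the dual Stokes problem (Theorem \ref{reg_brinkman} with $\rho=0$), which provides the $W^{2,p'}$ test field together with the quantitative bound; the remaining steps are integration by parts and the duality formula for $\|\cdot\|_p$. The restriction $p<6$, i.e. $p'>6/5$, is exactly what secures both the solvability of the dual problem in the required class and the embedding $\nabla w\in L^2$, so I expect no obstruction within the stated range.
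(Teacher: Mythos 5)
Your proof is correct and follows essentially the same route as the paper: the paper proves the weighted version (Lemma \ref{lemma_secondaire}) by exactly this duality argument --- solving the dual problem via Theorem \ref{reg_brinkman}, integrating by parts to identify $\int_\Omega v\cdot\phi$ with the Dirichlet pairing, and invoking the $W^{2,p'}$ bound --- and then obtains Lemma \ref{lemma_principal} as the special case $\rho=0$, which is precisely what you carried out directly. Your added care about the embedding $W^{1,p'}(\Omega)\subset L^2(\Omega)$ and the normalization of $w$ only makes explicit details the paper leaves implicit.
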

Similary, we define the following norm based on the weak-formulation for the Stokes-Brinkman equations. Given $\rho \in L^\infty(\Omega)$ such that $\rho \geq 0,$
we set: 
\begin{multline*}
[v]_{p,\Omega,\rho}:= \sup \Bigg\{ \left|\int_\Omega \nabla v: \nabla w +  \int_\Omega \rho  v \cdot w \right|, \\ w \in W^{2,p'}(\Omega)\cap W^{1,p'}_0(\Omega), \div w=0, \|w\|_{2,p'}=1 \Bigg  \}
\end{multline*}
Then, there holds:
\begin{lemme}\label{lemma_secondaire}
Let  $p\in ]1,6[$ and $\rho \in L^\infty(\Omega)$ such that $\rho \geq 0$.\
 There exists a non-negative $C=C(\Omega,p,\|\rho\|_{\infty})$ that satisfies: $$\|v\|_{p} \leq C [v]_{p,\Omega,\rho},$$
 for all divergence-free $v \in H^1_0(\Omega).$
\end{lemme}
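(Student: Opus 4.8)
The plan is to derive Lemma~\ref{lemma_secondaire} from the already-established Lemma~\ref{lemma_principal} by treating the zeroth-order term $\rho\,v$ as a controlled perturbation. The point is that the two bracket-norms differ only by the extra integral $\int_\Omega \rho\, v\cdot w$, and since $\rho \in L^\infty(\Omega)$ and the test function $w$ is normalized in $W^{2,p'}(\Omega)$, this extra term should be dominated by a low-order norm of $v$ that can ultimately be absorbed.

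Concretely, fix a divergence-free $v \in H^1_0(\Omega)$ and an admissible test function $w$ (so $w \in W^{2,p'}(\Omega)\cap W^{1,p'}_0(\Omega)$, $\div w=0$, $\|w\|_{2,p'}=1$). I would start from the triangle inequality
\begin{equation*}
\left| \int_\Omega \nabla v : \nabla w \right| \leq \left| \int_\Omega \nabla v : \nabla w + \int_\Omega \rho\, v\cdot w \right| + \left| \int_\Omega \rho\, v\cdot w \right|.
\end{equation*}
The first term on the right is at most $[v]_{p,\Omega,\rho}$ by definition. For the second term, I would estimate
\begin{equation*}
\left| \int_\Omega \rho\, v\cdot w \right| \leq \|\rho\|_\infty \, \|v\|_p \, \|w\|_{p'} \leq C(\Omega,p)\,\|\rho\|_\infty \, \|v\|_p,
\end{equation*}
using the continuous embedding $W^{2,p'}(\Omega)\subset L^{p'}(\Omega)$ together with $\|w\|_{2,p'}=1$. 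Taking the supremum over all admissible $w$ gives $[v]_{p,\Omega} \leq [v]_{p,\Omega,\rho} + C\|\rho\|_\infty \|v\|_p$, and then Lemma~\ref{lemma_principal} yields
\begin{equation*}
\|v\|_p \leq C_0 [v]_{p,\Omega} \leq C_0\big( [v]_{p,\Omega,\rho} + C\|\rho\|_\infty \|v\|_p \big).
\end{equation*}

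The main obstacle is that one cannot in general absorb the term $C_0 C \|\rho\|_\infty \|v\|_p$ into the left-hand side, since $\|\rho\|_\infty$ may be large and $C_0 C \|\rho\|_\infty$ need not be smaller than $1$. To get around this I would argue by contradiction/compactness rather than by naive absorption: suppose the claimed inequality fails, so there is a sequence $v_n$ of divergence-free fields in $H^1_0(\Omega)$ with $\|v_n\|_p=1$ but $[v_n]_{p,\Omega,\rho}\to 0$. The bound above then shows $[v_n]_{p,\Omega}$ stays bounded, and one recovers $H^1_0$-bounds on $v_n$ from the definition of the bracket-norm (testing against suitable $w$), so that up to a subsequence $v_n \rightharpoonup v$ weakly in $H^1_0(\Omega)$ and strongly in $L^p(\Omega)$ by Rellich, forcing $\|v\|_p=1$; passing to the limit in the weak formulation shows $v$ is divergence-free with $[v]_{p,\Omega,\rho}=0$, hence $\int_\Omega \nabla v:\nabla w + \int_\Omega \rho v\cdot w =0$ for all such $w$, which by the uniqueness part of Theorem~\ref{theorem_existence} (with $j=0$, $u^*=0$) forces $v=0$, contradicting $\|v\|_p=1$. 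Alternatively, and more in the spirit of this section, one can reread the proof of Lemma~\ref{lemma_principal}: if that proof proceeds by constructing, for each $v$, a good test function $w$ solving an adjoint Stokes problem, then the same construction applied to the adjoint \emph{Stokes-Brinkman} problem directly produces the estimate with the $\rho$-term built in, since the extra zeroth-order term is exactly matched. I expect the cleanest route is this second one, mirroring the proof of Lemma~\ref{lemma_principal} verbatim with the Stokes operator replaced by $-\Delta + \rho$, so that the constant picks up its dependence on $\|\rho\|_\infty$ through the regularity estimate of Theorem~\ref{reg_brinkman}.
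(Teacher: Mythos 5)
Your second, preferred route is exactly the paper's proof (with one small reversal of logic: the paper proves Lemma~\ref{lemma_secondaire} directly and obtains Lemma~\ref{lemma_principal} as the special case $\rho=0$, rather than going from the first lemma to the second as your premise suggests). Namely, one writes $\|v\|_{p}= \sup \left\{ \left|\int_{\Omega} v \cdot \phi \right| : \phi \in L^{p'}(\Omega), \ \|\phi\|_{p'}=1 \right\}$, solves for each $\phi$ the adjoint Stokes-Brinkman problem $-\Delta u_\phi + \nabla \pi_\phi + \rho u_\phi = \phi$, $\div u_\phi =0$, $u_\phi = 0$ on $\partial\Omega$, invokes Theorem~\ref{reg_brinkman} (legitimate since $p<6$ gives $p'\geq 6/5$) to get $\|u_\phi\|_{2,p'} \leq C(\Omega,p,\|\rho\|_\infty)$, and integrates by parts, using $\div v =0$ and $v \in H^1_0(\Omega)$, to obtain $\int_\Omega v \cdot \phi = \int_\Omega \nabla u_\phi : \nabla v + \int_\Omega \rho\, u_\phi \cdot v \leq [v]_{p,\Omega,\rho}\, \|u_\phi\|_{2,p'}$. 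As you say, the zeroth-order term is exactly matched, and the dependence of the constant on $\|\rho\|_\infty$ enters precisely through the regularity estimate.

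However, the first route — the one you actually develop in detail — has a genuine gap and should be discarded, not kept as a fallback. In the contradiction argument you claim that one "recovers $H^1_0$-bounds on $v_n$ from the definition of the bracket-norm". This is false: for divergence-free $v \in H^1_0(\Omega)$ and admissible $w$, integration by parts gives $\int_\Omega \nabla v : \nabla w = -\int_\Omega v \cdot \Delta w$, whence $[v]_{p,\Omega} \lesssim \|v\|_p$ and likewise $[v]_{p,\Omega,\rho} \lesssim (1+\|\rho\|_\infty)\|v\|_p$. The bracket-norms are therefore \emph{weaker} than the $L^p$-norm (the entire content of the lemmas is the reverse inequality), so a sequence with $\|v_n\|_p = 1$ and $[v_n]_{p,\Omega,\rho}\to 0$ carries no $H^1$ bound whatsoever; Rellich cannot be invoked, no subsequence converging strongly in $L^p$ can be extracted, and the contradiction never materializes. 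A lesser defect of the same route: even if it worked, a compactness argument would yield a constant depending on the particular function $\rho$, and non-quantitatively so, rather than only on $\|\rho\|_\infty$ as the statement requires. Only the duality argument goes through.
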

As Lemma \ref{lemma_principal} can be obtained by setting $\rho =0$ in Lemma \ref{lemma_secondaire}, we prove only the second one.
\begin{proof}
The idea is to use the following equality: 
$$\|v\|_{p}= \sup \left\{ \left|\int_{\Omega} v \cdot \phi \right|, \quad \phi \in L^{p'}(\Omega) \quad \|\phi\|_{L^{p'}(\Omega)}=1   \right\}.$$
Let $p \leq 6$ and $\phi \in L^{p'}(\Omega)$, $\|\phi\|_{p'}=1$. Because $p'\geq 6/5,$ we introduce the unique solution $(u_{\phi},\pi_{\phi})$ to the problem
\begin{equation} \label{stokes_phi_2}
\left\{
\begin{array}{rcl}
- \Delta u_\phi + \nabla \pi_\phi+ \rho u_\phi &=& \phi,  \\[4pt]
{\div}\, u_\phi &= & 0 ,
\end{array}
\right.
\quad \text{ on $\Omega$},
\end{equation}
completed with the boundary condition $u_\phi =0$ on $\partial 
\Omega$. According to Lemma \ref{reg_brinkman}, this  solution satisfies  $u_{\phi} \in W^{2,p'}(\Omega) \cap W^{1,p'}_0(\Omega)$, $ p_\phi \in W^{1,p'}(\Omega)$ and 
$$
 \|u_\phi\|_{2,p'} \leq C \|\phi\|_{p'} \leq C.
$$
Moreover, we have that $ W^{2,p'}(\Omega) \subset W^{1,2}(\Omega).$ This yields that, using an integration by parts:
\begin{align*}
 \int_{\Omega} v \cdot \phi  & = \int_{\Omega} v \cdot (- \Delta u_{\phi}+ \nabla \pi_{\phi}+  \rho  u_\phi ) \\
&= \int_{\Omega} \nabla u_{\phi}: \nabla v  + \int_\Omega \rho  u_\phi \cdot v .
\end{align*}
This entails:
$$
 \left| \int_{\Omega} v \cdot \phi \right| \leq [v]_{p,\Omega,\rho}  \|u_\phi\|_{2,p'}  \leq C [v]_{p,\Omega,\rho}.
$$
We obtain:
$$ 
\left|\int_{\Omega} v \cdot \phi \right| \leq C  [v]_{p,\Omega,\rho}, \quad \forall \phi \in L^{p'}(\Omega), \|\phi\|_{p'}=1 .
$$

\end{proof}


\subsection{{The Stokes problem in an exterior domain}}

In this part, we focus on the case $\Omega = \mathbb{R}^3 \setminus B(0,r)$ where $r >0$. 
Given $V \in \mathbb{R}^3,$ we consider the Stokes problem on $\Omega$: 
\begin{equation} \label{eq_stokesunbounded}
\left\{
\begin{array}{rcl}
- \Delta u + \nabla \pi &=& 0,  \\
{\div}\,  u &= & 0 ,
\end{array}
\right.
\quad \text{ on $\Omega$},
\end{equation}
completed with boundary conditions:
\begin{equation} \label{cab_stokesunbounded}
u(x) = V,\:   \text{on $\partial B(0,r),$} \quad \underset{|x|\to \infty}{\lim} |u(x)|=0.
\end{equation}




We investigate here the convergence of Stokes solutions on annuli to the Stokes solution on the exterior domain. Precisely, let $R>r$ and  $\Omega_{R}= B(0,R) \setminus \overline{B(0,r)}=A(0,r,R)$.
We denote by $(u_R,\pi_R)$ the solution to:
\begin{equation}  \label{eq_stokestruncated}
\left\{
\begin{array}{rcl}
- \Delta u_R + \nabla \pi_R &=& 0,  \\
{\div}\, u_R &= & 0 ,
\end{array}
\right.
\quad \text{ on $\Omega_R $},
\end{equation}
completed with boundary conditions:
\begin{equation} \label{cab_stokestruncated}
u(x) = V, \:   \text{on $\partial B(0,r),$} \quad u(x)=0, \:\text{on $\partial B(0,R).$}
\end{equation}
We emphasize that we only consider constant boundary conditions. In this particular case existence theory for \eqref{eq_stokesunbounded}-\eqref{cab_stokesunbounded} is well known since explicit formulas for the solutions
are part of the folklore (see \cite{LL} and  more recently \cite{Desvillettes}). Explicit solutions for \eqref{eq_stokestruncated}-\eqref{cab_stokestruncated} are also available following the same construction scheme as in the unbounded case. We refer here to \cite[Section 6.2]{Desvillettes} for more details. On the basis of these formulas, the convergence of $(u_R,\pi_R)$ to $(u,\pi)$ is studied in  \cite{Desvillettes}. For later purpose, we complement here this study with two supplementary properties of this convergence.

\medskip



First, we denote 
$$
F_R^r = \int_{\partial B(0,r)} (\nabla u_R - \pi_R \mathbb I)  n d \sigma, \quad 
F^r = \int_{\partial B(0,r)} (\nabla u - \pi \mathbb I)  n d \sigma.
$$ 
We use the symbol $\mathbb I$ here for the identity matrix in $\mathbb R^3.$
These quantities are related to the force exerted by the flow $(u_R,\pi_R)$ (resp. $(u,\pi)$) on the hole $B(0,R)$ (see Appendix \ref{force} for more details). 
We recall that Stokes law states that $F^r = 6\pi rV.$
The following lemma shows that the sequence $F_R^r$ converges to $F^r$. Moreover, explicit formulas for $u_R$ and $u$ allow to compute the rate of this convergence:
\begin{lemme}\label{lemma1}
There holds:
$$
|F^r_R-F^r| \lesssim r^2  \frac{|V|}{R}.
$$
\end{lemme}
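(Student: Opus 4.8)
The plan is to exploit the explicit formulas for the solutions of \eqref{eq_stokesunbounded}--\eqref{cab_stokesunbounded} and \eqref{eq_stokestruncated}--\eqref{cab_stokestruncated} recalled in \cite[Section 6.2]{Desvillettes}, and to reduce the computation of the forces $F^r_R$ and $F^r$ to the single coefficient governing the slowest-decaying part of the velocity fields. Both $u$ and $u_R$ are invariant under the symmetries fixing the direction $V$ and, being linear in $V$, may be written in the form $f(\rho)V + g(\rho)(V\cdot x)x$ with $\rho = |x|$, where the pair $(f,g)$ is a linear combination of the four elementary solutions associated respectively to the Stokeslet ($\rho^{-1}$), the potential dipole ($\rho^{-3}$), the uniform mode ($1$) and the growing mode ($\rho^2$). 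For the exterior problem the decay condition at infinity forces the last two to vanish, and the two scalar boundary conditions at $\rho=r$ fix the Stokeslet coefficient to the classical value $a_\infty = 3r/4$; for the truncated problem the four coefficients $(a_R,b_R,c_R,d_R)$ are instead determined by the four scalar conditions coming from $u_R = V$ on $\{\rho=r\}$ and $u_R=0$ on $\{\rho=R\}$.

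The first step is to observe that $F^r_R$ and $F^r$ depend only on the Stokeslet coefficients $a_R$ and $a_\infty$. Indeed, since $(u,\pi)$ (resp.\ $(u_R,\pi_R)$) solves the Stokes equations, the matrix field $\nabla u - \pi \mathbb I$ (resp.\ $\nabla u_R - \pi_R \mathbb I$) is divergence-free on the fluid domain, so the surface integrals defining $F^r$ and $F^r_R$ are unchanged when $\partial B(0,r)$ is replaced by any sphere $\partial B(0,\rho)$ contained in the fluid region. It then suffices to compute, on a generic sphere $\partial B(0,\rho)$, the contribution of each of the four elementary modes. A direct computation shows that the dipole, uniform and growing modes each give a vanishing net contribution — the dipole is the gradient of a harmonic function, the uniform mode carries a constant pressure, and the growing mode cancels exactly against its associated linear pressure — while the Stokeslet contributes $8\pi a\, V$. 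This yields $F^r = 8\pi a_\infty V = 6\pi r V$, recovering Stokes' law, and $F^r_R = 8\pi a_R V$, whence $|F^r_R - F^r| = 8\pi\,|a_R - a_\infty|\,|V|$.

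It then remains to estimate $|a_R - a_\infty|$. For this I would solve the $4\times 4$ linear system for $(a_R,b_R,c_R,d_R)$ imposed by the four boundary conditions. Setting $s = r/R \in (0,1)$ and factoring out the natural powers of $r$, this system appears as a perturbation of the exterior one, its determinant being bounded away from zero uniformly in $s$; an expansion (or a direct manipulation of Cramer's formulas) then gives $a_R = \tfrac{3r}{4}\bigl(1 + O(s)\bigr)$, the leading correction being of size $r\cdot s = r^2/R$. Consequently $|a_R - a_\infty| \lesssim r^2/R$, and the claimed bound on $|F^r_R - F^r|$ follows.

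The conceptual steps — the reduction of the force to the Stokeslet coefficient via the divergence-free structure of $\nabla u - \pi\mathbb I$, and the identification of the three force-free elementary modes — are short. I expect the main obstacle to be purely computational: solving the $4\times 4$ system explicitly and extracting the leading-order term of $a_R - a_\infty$ while keeping the multiplicative constant uniform, that is, verifying that the determinant of the system does not degenerate as the ratio $r/R$ ranges over $(0,1)$.
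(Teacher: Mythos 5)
Your argument is sound in substance, and it differs genuinely from the paper's proof. The paper reduces to $r=1$ by scaling, imports from \cite[Section 6.2]{Desvillettes} the expansions of the four coefficients $A(R),B(R),C(R),D(R)$ of the annular solution, and then verifies the estimate by brute force: it writes $u_R-u$ explicitly and bounds the two surface integrals $\int_{\partial B(0,1)}\nabla(u_R-u)\,n\,{\rm d}\sigma$ and $\int_{\partial B(0,1)}(\pi-\pi_R)\,n\,{\rm d}\sigma$ separately, each by $|V|/R$. You instead make the structural observation that, since $\nabla u-\pi\mathbb I$ is divergence-free for a Stokes flow, the force can be evaluated on any intermediate sphere, and that among the four elementary modes only the Stokeslet carries a nonzero net force (the dipole, uniform and growing modes being force-free, e.g.\ by the scaling-in-$\rho$ argument); hence $F^r_R-F^r$ equals $8\pi(a_R-a_\infty)V$ and the whole lemma reduces to the single coefficient estimate $|a_R-a_\infty|\lesssim r^2/R$. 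This buys you a cleaner and more conceptual treatment of the force computation (no pressure integral, no cancellation to track), but it shifts the computational burden onto deriving the coefficient asymptotics from the $4\times 4$ boundary system — which is exactly the content the paper outsources to \cite{Desvillettes}. In effect the two proofs are complementary: the paper cites the expansion and computes the integrals; you would recompute the expansion and skip the integrals.

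One caveat: your claim that the (normalized) determinant of the $4\times 4$ system is bounded away from zero \emph{uniformly} for $s=r/R\in(0,1)$ is not correct near $s=1$; as $R\to r^+$ the two boundary spheres merge, the system degenerates, and indeed $F^r_R$ blows up (thin-gap lubrication), so the inequality of Lemma \ref{lemma1} itself cannot hold with a uniform constant in that regime. This is not a defect relative to the paper — the expansions in $1/R$ used there carry the same implicit restriction, and in the application one always has $R\geq 2r$ (this is what the condition $N\geq(4R_0/C_0)^{3/2}$ guarantees after rescaling) — but you should state the uniformity of your Cramer's-rule expansion on $s\in(0,1-\epsilon]$ only, where it does hold: the limit system at $s=0$ is block-triangular and invertible, and for each fixed $s\in(0,1)$ the determinant is nonzero by well-posedness of the annular Stokes problem, so compactness gives the uniform lower bound on any such subinterval.
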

\begin{proof}
We show the inequality for $r=1$. The result extends to any $r>0$ by a standard scaling argument that we recall afterwards.

\medskip

We have that: 
$$F_R^1-F^1 = \int_{\partial B(0,1)} (\nabla (u_R-u))  n d \sigma + \int_{\partial B(0,1)} (\pi-\pi_R)  n d \sigma .
$$

Adopting the notations introduced in \cite{Desvillettes} we set  $r=|x|$, $\omega= \frac{x}{|x|}$ and $P_\omega V = (\omega \cdot V) \omega$. We have then, for arbitrary $x \in A(0,1,R)$ that:
\begin{align*}
u_R(x) &=-\left [4 A(R)r^2+2B(R)+\frac{C(R)}{r}-\frac{D(R)}{r^3} \right ](\mathbb{I}-P_\omega)V \\
&\; - 2\left [ A(R)r^2+B(R)+\frac{C(R)}{r}+\frac{D(R)}{r^3} \right ]P_\omega V
\end{align*}
where: 
$$
\begin{array}{rclcrcl}
A(R) &=& \displaystyle -\frac{3}{8R^3}+O \left (\frac{1}{R^4} \right ),  & &  B(R) &=& \displaystyle \frac{9}{8R}+O\left(\frac{1}{R^2}\right), \\[12pt] 
C(R) &=& \displaystyle -\frac{3}{4}+O\left (\frac{1}{R}\right ), & &  D(R)&=& \displaystyle \frac{1}{4}+O \left (\frac{1}{R} \right ).
\end{array}
$$
The formula for $u$ is obtained by replacing $A(R),B(R),C(R),D(R)$ by their limits when $R \to \infty$ in the formula
defining $u.$

\medskip

In the same spirit as on page 965 of \cite{Desvillettes}, we have that, for arbitrary $x \in A(0,1,R)$:
\begin{align*}
u_R(x) -  u(x) &\\
=&\left [\frac{3}{2R^3} r^2-r^2 O \left (\frac{1}{R^4} \right )-\frac{9}{4R}+O \left (\frac{1}{R}\right ) +O \left (\frac{1}{R}\right ) \dfrac 1r +\frac{1}{r^3}O\left (\frac{1}{R} \right ) \right ](\mathbb{I}-P_\omega)V \\
  + &\left [\frac{3}{4R^3} r^2+r^2 O \left (\frac{1}{R^4} \right )-\frac{9}{4R}+O\left (\frac{1}{R} \right )+O\left (\frac{1}{R} \right )\frac{1}{r}+\frac{1}{r^3}O \left (\frac{1}{R} \right ) \right ]P_\omega V .
\end{align*}
This yields
\begin{align*}
\int_{\partial B(0,1)}\nabla(u_R-u)  n d \sigma &   \\ 
= & \left(\frac{3}{2R^3}+ O \left( \frac{1}{R^4}\right)+O \left(\frac{1}{R}\right) \right)(4\pi^2 V- \int_{\partial B(0,1)} V\cdot x  x d \sigma )\\ 
+& \left(\frac{3}{2R^3}+O \left (\frac{1}{R^4}\right )+ O \left (\frac{1}{R}\right )\right)\int_{\partial B(0,1)} V\cdot x  x d \sigma,
\end{align*}
and consequently:
$$
\left|\int_{\partial B(0,1)}\nabla(u_R-u)  n d \sigma \right| \lesssim \frac{|V|}{R}.
$$

By using \cite[Section 6.2]{Desvillettes} we get a similary formula for the pressures:
$$
\pi_R(x)-\pi(x)=\left ( -20A(R)|x|+\frac{5A(R)+3B(R)}{|x|^2} \right )
 \frac{x\cdot V}{|x|}, \quad \forall x\in A(0,1,R).
$$

This entails that:

\begin{align*}
\left|\int_{\partial B(0,1)} (\pi-\pi_R)  n{\rm d}\sigma\right| &\lesssim (25 |A(R)|+3|B(R)|) \left| \int_{\partial B(0,1)} V\cdot x  x d\sigma \right|\\
& \lesssim   \frac{|V|}{R}.
\end{align*}
Finally, we get that:
$$
|F_R^1-F^1| \lesssim \frac{|V|}{R} .
$$

We obtain the inequality for arbitrary $r$ by remarking that, denoting $(\tilde{u},\tilde{\pi})$ the solution to the Stokes problem on $\mathbb{R}^3\setminus B(0,r)$ (resp. $(\tilde{u}_R,\tilde{\pi}_R)$ the solution to the Stokes problem on $A(0,r,R)$), we have:
$$
\begin{array}{rcll}
 (\tilde{u}(x),\tilde{\pi}(x))&=& \displaystyle \left (u\left (\frac{x}{r}\right ),\frac{1}{r}\pi \left (\frac{x}{r}\right ) \right ), & \text{ for all }x \in \mathbb{R}^3\setminus B(0,r), \\[12pt]
 (\tilde{u}_R(x),\tilde{\pi}_R(x)&=& \displaystyle \left (u_{R/r}\left (\frac{x}{r} \right ), \frac{1}{r}\pi_{R/r}\left (\frac{x}{r} \right ) \right ), &  \text{ for all }x \in A(0,r,R).
\end{array} 
$$
Introducing this scaling in the formulas for $F_R^r,$  we get that:
$$
F_R^r-F^r= r ( F_{R/r}^1-F^1).
$$
This entails finally that: 
\begin{align*}
|F_R^r-F^r| &= r |F_{R/r}^1-F^1| \\
& \lesssim r^2 \frac{|V|}{R}.
\end{align*}
 
\end{proof}



We conclude this section by an error estimate for the velocity gradient:
\begin{lemme}\label{lemma4}
There holds:
$$ \int_{A(0,R/2,R)} |\nabla u_R|^2 \lesssim  \frac{r^2} {R} |V|^2.
$$
\end{lemme}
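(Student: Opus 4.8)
The plan is to reuse the explicit representation of $u_R$ recorded in the proof of Lemma~\ref{lemma1} and to reduce to the normalized case $r=1$ by the same scaling argument used there. Writing $\tilde u_R(x)=u_{R/r}(x/r)$ one has $\nabla\tilde u_R(x)=r^{-1}(\nabla u_{R/r})(x/r)$, so the change of variables $y=x/r$ gives
\begin{equation*}
\int_{A(0,R/2,R)}|\nabla\tilde u_R|^2 = r\int_{A(0,\tilde R/2,\tilde R)}|\nabla u_{\tilde R}|^2, \qquad \tilde R:=\frac{R}{r}.
\end{equation*}
Hence it suffices to prove $\int_{A(0,R/2,R)}|\nabla u_R|^2\lesssim |V|^2/R$ when $r=1$; inserting $\tilde R=R/r$ then produces the factor $r\cdot r/R=r^2/R$ claimed in the statement.

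For $r=1$ I would write $u_R=f(\rho)(\mathbb I-P_\omega)V+g(\rho)P_\omega V$ with $\rho=|x|$ and $\omega=x/|x|$, where
\begin{align*}
f(\rho) &= -\Big[4A(R)\rho^2+2B(R)+\tfrac{C(R)}{\rho}-\tfrac{D(R)}{\rho^3}\Big], \\
g(\rho) &= -2\Big[A(R)\rho^2+B(R)+\tfrac{C(R)}{\rho}+\tfrac{D(R)}{\rho^3}\Big],
\end{align*}
using the coefficient asymptotics $A(R)=O(1/R^3)$, $B(R)=O(1/R)$, $C(R)=O(1)$, $D(R)=O(1)$ from Lemma~\ref{lemma1}. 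Decomposing the gradient into a radial part, which carries $f'(\rho)$ and $g'(\rho)$, and an angular part, coming from the derivatives of the projections; since $\nabla\omega=\rho^{-1}(\mathbb I-\omega\otimes\omega)$, one gets $|\nabla u_R|\lesssim\big(|f'(\rho)|+|g'(\rho)|\big)|V|+\rho^{-1}\big(|f(\rho)|+|g(\rho)|\big)|V|$. The target is the pointwise bound $|\nabla u_R(x)|\lesssim |V|/R^2$ throughout $A(0,R/2,R)$.

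To establish this I would evaluate each elementary term on the range $\rho\in[R/2,R]$. The radial derivatives $f'$ and $g'$ are linear combinations of $A(R)\rho$, $C(R)/\rho^2$ and $D(R)/\rho^4$, all of which are $O(1/R^2)$ there. For the angular part, $f(\rho)$ and $g(\rho)$ are each sums of the terms $A\rho^2$, $B$, $C/\rho$ (each $O(1/R)$) and $D/\rho^3$ (smaller), hence $O(1/R)$ on $\rho\sim R$, so $\rho^{-1}(|f|+|g|)=O(1/R^2)$ as well. Combining gives $|\nabla u_R|\lesssim |V|/R^2$ on the outer annulus, and integrating against the volume $|A(0,R/2,R)|\lesssim R^3$ yields $\int_{A(0,R/2,R)}|\nabla u_R|^2\lesssim R^3\cdot |V|^2/R^4=|V|^2/R$, as desired.

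The routine-but-delicate part, and the one I expect to require most care, is the bookkeeping of orders: one must track not only the leading coefficients but also the $O(\cdot)$ remainders in $A,B,C,D$, and verify that the apparently $O(1)$ quantities $C(R)$ and $D(R)$ enter $f,g$ and their derivatives only through the decaying combinations $C/\rho$, $C/\rho^2$, $D/\rho^3$, $D/\rho^4$, which are all at most $O(1/R^2)$ once $\rho\geq R/2$. Conceptually the smallness is forced by the outer boundary condition $u_R=0$ at $\rho=R$, which makes $u_R$ itself of size $O(|V|/R)$ and varying on the scale $R$ near $\partial B(0,R)$; everything else is elementary.
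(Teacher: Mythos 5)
Your proposal is correct and follows exactly the route the paper intends: its proof of Lemma~\ref{lemma4} consists precisely of plugging the explicit formula for $u_R$ with the coefficients $A(R),B(R),C(R),D(R)$ from the proof of Lemma~\ref{lemma1} and then rescaling to general $r>0$, with the details left to the reader. Your write-up supplies those details (the pointwise bound $|\nabla u_R|\lesssim |V|/R^{2}$ on $A(0,R/2,R)$ for $r=1$, integration over the annulus of volume $\lesssim R^{3}$, and the correct scaling factor $r\cdot r/R=r^{2}/R$) accurately.
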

\begin{proof}
We obtain the result for $r=1$ by plugging the explicit formulas
for $u^R$ and the coefficients $A(R),B(R),C(R),D(R)$ in the previous proof and generalize it to arbitrary $r >0$ by a scaling argument. The details are left to the reader.
\end{proof}



\section{Proofs of theorem \ref{theorem1} and \ref{theorem2}} \label{sec_proofs}

We proceed in this section with the proofs of our main theorems. In this section, we fix $\Omega,R_0,C_0,$ and $p \in ]1,3/2[,$ $q \in (3,\infty)$ as in the assumptions of our theorems. When using the symbol $\lesssim,$ we allow the implicit constant to depend on theses values $R_0,C_0,p,q,\Omega.$ 

\medskip

Let $N \geq N_0:=  (4R_0/C_0)^{\frac 32}.$ we recall that $E_{\Omega}[u^N]$ is the solution to the Stokes problem \eqref{eq_stokesN}-\eqref{cab_stokesN}  on the perforated domain $\mathcal F^N$ with boundary data $V_1,\ldots,V_N.$ With similar arguments to \cite[Section 3]{hillairet}
we have:
\begin{prop} \label{prop_unifbound}
There exists a constant $K$ depending only on $R_0$ and $C_0$
for which:
$$
\|E_{\Omega}[u^N]\| \leq K E_0, \quad \forall  N \geq N_0.
$$
\end{prop}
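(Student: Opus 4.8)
The plan is to bound the energy of $E_\Omega[u^N]$ by exhibiting a cheap divergence-free competitor and invoking the variational characterisation of $u^N$. I interpret $\|\cdot\|$ as the Dirichlet norm $\|\nabla\cdot\|_{2}$ (the $H^1_0$-case then follows by Poincar\'e's inequality, since $E_\Omega[u^N]\in H^1_0(\Omega)$). As $\nabla E_\Omega[u^N]$ vanishes on each ball $B_j^N$, the claim reduces to $\int_{\mathcal F^N}|\nabla u^N|^2 \lesssim R_0 E_0^2$. For this I use that, by Theorem \ref{theorem_existence}(iii) applied on $\mathcal F^N$ with $\rho=0$ and $j=0$, the field $u^N$ minimises $v\mapsto \int_{\mathcal F^N}|\nabla v|^2$ over all divergence-free $v\in H^1(\mathcal F^N)$ with $v=V_i^N$ on $\partial B_i^N$ and $v=0$ on $\partial\Omega$. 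Hence any admissible competitor $w$ yields $\int_{\mathcal F^N}|\nabla u^N|^2\le \int_{\mathcal F^N}|\nabla w|^2$, and the whole problem becomes the design of a low-energy $w$.

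I build $w$ ball by ball on thin shells. Write $a_i:=r_i^N/N$ and set $A_i:=A(x_i^N,a_i,2a_i)$. Assumption \eqref{hyp1} together with $N\ge N_0=(4R_0/C_0)^{3/2}$ guarantees that the enlarged balls $B(x_i^N,2a_i)$ are pairwise disjoint and stay away from $\partial\Omega$, so the shells $A_i$ are disjoint. On each shell I choose a cutoff $\chi_i$ with $\chi_i\equiv1$ on $B_i^N$, $\chi_i\equiv0$ outside $B(x_i^N,2a_i)$ and $|\nabla\chi_i|\lesssim 1/a_i$. The field $V_i^N\chi_i$ is not divergence-free, so I correct it with the Bogovskii operator: since $V_i^N\cdot\nabla\chi_i=\div(V_i^N\chi_i)$ has zero mean on $A_i$ (its integral reduces to $V_i^N\cdot\int_{\partial B_i^N}n\,{\rm d}\sigma=0$), I may set $w_i:=V_i^N\chi_i-\mathfrak B_{x_i^N,a_i,2a_i}[V_i^N\cdot\nabla\chi_i]$. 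Then $w_i$ is divergence-free, equals $V_i^N$ on $\partial B_i^N$ and vanishes on $\partial B(x_i^N,2a_i)$, so $w:=\sum_i w_i$, extended by $V_i^N$ inside $B_i^N$ and by $0$ elsewhere, is an admissible competitor.

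It remains to estimate the energy shell by shell. The cutoff contributes $\int_{A_i}|V_i^N\nabla\chi_i|^2\lesssim |V_i^N|^2 a_i^{-2}|A_i|\lesssim |V_i^N|^2 a_i$, using $|A_i|\lesssim a_i^3$. For the correction I use that the bound $\|\nabla\mathfrak B_{x_i^N,a_i,2a_i}[g]\|_2\lesssim\|g\|_2$ is invariant under scaling of the annulus, the constant depending only on the (fixed) ratio of the radii; with $g=V_i^N\cdot\nabla\chi_i$ this gives $\|\nabla\mathfrak B_{x_i^N,a_i,2a_i}[g]\|_2\lesssim |V_i^N|\,a_i^{1/2}$, i.e. the same bound $\lesssim|V_i^N|^2 a_i$. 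Summing over the disjoint shells and using $a_i\le R_0/N$ with \eqref{hyp3},
$$
\int_{\mathcal F^N}|\nabla w|^2=\sum_{i=1}^N\int_{A_i}|\nabla w_i|^2\lesssim \sum_{i=1}^N |V_i^N|^2 a_i\le \frac{R_0}{N}\sum_{i=1}^N|V_i^N|^2\le R_0 E_0^2.
$$
Combined with the minimality of $u^N$ this gives $\|E_\Omega[u^N]\|\lesssim \sqrt{R_0}\,E_0$, with an implicit constant built only from the universal cutoff and Bogovskii constants, hence depending only on $R_0$ and $C_0$.

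The delicate point is the choice of scale in the competitor. If one instead spread the transition of $\chi_i$ over the full gap $\sim N^{-1/3}$ between the holes, the cutoff energy per ball would be of order $N^{-1/3}|V_i^N|^2$ and the sum would diverge like $N^{2/3}E_0^2$. The correct Stokes scaling $a_i|V_i^N|^2$ is recovered only by concentrating the transition in a shell of width comparable to the radius $a_i=r_i^N/N$ itself; the scale-invariance of the Bogovskii constant on fixed-ratio annuli is exactly what keeps the divergence correction harmless at this scale.
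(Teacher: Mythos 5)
Your proof is correct and follows essentially the approach the paper intends: the paper gives no self-contained proof of Proposition~\ref{prop_unifbound}, deferring to \cite[Section 3]{hillairet}, where the uniform bound is obtained precisely by the strategy you use — the variational characterization of the Stokes solution (Theorem~\ref{theorem_existence}(iii) with $\rho=0$, $j=0$ on $\mathcal F^N$) combined with a divergence-free competitor concentrated in shells at the scale of the hole radii, whose per-hole energy scales like $(r_i^N/N)|V_i^N|^2$. Your implementation (cutoff at scale $2a_i$, zero-mean verification for the divergence, Bogovskii correction with a scale-invariant constant on fixed-ratio annuli, disjointness of the shells from \eqref{hyp1} and $N\geq N_0$) is sound, and the resulting bound $\|\nabla E_\Omega[u^N]\|_{L^2(\Omega)}\lesssim \sqrt{R_0}\,E_0$ is exactly what is needed.
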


We also introduce $\bar{u}$  the solution to the Stokes-Brinkman problem \eqref{eq_brinkman_intro}-\eqref{cab_brinkman_intro}
associated with the data $j,\rho$ that may be computed from the particle distribution function to which the sequence of empirical measures describing the $N$-configurations converges. 

\medskip
  
The main idea is common to both proofs: we apply duality arguments reported in Lemma \ref{lemma_principal} or in Lemma \ref{lemma_secondaire} in order to estimate the $L^p$-norm of the vector-field $v^N:= E_\Omega[u^N]-\bar{u}$. 
Hence, the core of the proof is the computation of 
$$\left| \int_{\Omega} \nabla v^N : \nabla w \right|,$$ 
for an arbitrary divergence-free vector-field 
$ w \in W^{2,p'}(\Omega) \cap W^{1,p'}_0(\Omega)$. 

\medskip

In the two next parts, we prepare these computations by fixing a divergence-free vector-field $w \in W^{2,p'}(\Omega) \cap W^{1,p'}_0(\Omega).$ We compute equivalent formulas for
$$
\int_{\Omega} \nabla v^N: \nabla w ,
$$ 
and provide some bounds that are relevant for both proofs. 

\medskip

We remind the classical embedding that we use repeatedly below: since $p\in [1,\frac{3}{2}[,$ there holds:
$$
W^{2,p'}(\Omega) \hookrightarrow  \mathcal{C}^{0,1}(\bar{\Omega}).
$$

\subsection{\textbf{Extraction of first order terms}}
Let $N\geq N_0$ and  $w \in W^{2,p'}(\Omega) \cap W^{1,p'}_0(\Omega)$ be divergence-free.  We have:
 $$\int_{\Omega} \nabla v^N: \nabla w = \int_{\Omega} \nabla E_{\Omega}[u^N]: \nabla w - \int_{\Omega} \nabla \bar u: \nabla w,$$
where 
\begin{eqnarray*}
\int_{\Omega} \nabla \bar u: \nabla w &=& \int_{\Omega}(j(x)- \rho(x) \bar u(x)) \cdot w(x) {\rm d}x, \\
\int_{\Omega} \nabla E_{\Omega}[u^N]: \nabla w &=& \int_{\mathcal{F}^N} \nabla u^N: \nabla w. 
\end{eqnarray*}
In what follows we use the shortcuts:
\begin{equation}
\label{couronnes}
 A_j^N:= A(x_j^N,C_0/4N^{1/3},C_0/2N^{1/3}), \quad {\bar{u}}_j^N := \oint_{A_j^N} u^N.
\end{equation}
and 
$$
\Omega_j^N = B\left (x_j^N,\frac{C_0}{2N^{\frac{1}{3}}} \right )\setminus B_j^N \quad \forall  j,N.
$$
Because of the definition \eqref{hyp1} of $C_0,$  the sets $\Omega_j^N$ are disjoint and cover a subset of $\Omega$. These sets are also annuli but they play a special role to our proof hence the different name. Because $N \geq N_0,$ the sets $\Omega_{j}^N$ are not empty so that their boundaries are made of two concentric spheres. The internal sphere is $\partial B_j^N$ while we denote below $\partial_e \Omega_j^N$ the external sphere.
 
\medskip

We first decompose the scalar product $ \int_{\mathcal{F}^N} \nabla u^N: \nabla w$ into $N$ integrals on the disjoint annuli $\Omega_j^N$. 
To this end, given $j\in \{1,\ldots,N\},$ we define $(\widehat{w}_j^N,\widehat{\pi}_j^N)$ the unique solution to the Stokes problem \begin{equation} \label{eq_stokes2}
\left\{
\begin{array}{rcl}
-\Delta \widehat{w}_j^N + \nabla \widehat{\pi}_j^N &=& 0 , \\[8pt]
{\rm div}\,  \widehat{w}_j^N &= & 0 ,
\end{array}
\right.
\quad \text{ on $\Omega^{N}_{j}$},
\end{equation}
completed with boundary conditions:
\begin{equation} \label{cab_stokes2}
\left\{
\begin{array}{rcll}
\widehat{w}_j^N(x) &=& w(x) , &  \text{on $\partial  B_j^{N},$}  \\[8pt]
\widehat{w}_j^N(x) &=& 0 , & \text{on $\partial_e \Omega_j^N$}.
\end{array}
\right.
\end{equation}
We still denote $\widehat{w}^N_j$ the trivial extension of $\widehat{w}^N_j$ to $\mathcal{F}^N$ and we set 
$$ 
w^N:=\overset{N}{\underset{j=1}{\sum}} \widehat{w}^N_j.
$$ 
We remark then that $w^N$ satisfies:
$$
\left\{
\begin{array}{ll}
 w^N \in H^1(\mathcal F^N), & \\[4pt]
 {\rm div} \, w^N = 0, & \text{ on $\mathcal F^N$,}\\[4pt]
 w^N = w, & \text{ on $\partial \mathcal F^N.$}
\end{array}
\right.
$$
We have then: 
$$
\int_{\mathcal{F}^N}\nabla u^N: \nabla w = \int_{\mathcal{F}^N} \nabla u^N: \nabla ( w - w^N) + \int_{\mathcal{F}^N}\nabla u^N: \nabla w^N.
$$
Because $u^N$ is the solution to the Stokes problem on $\mathcal{F}^N$ and  $w-w^N \in D_0(\mathcal{F}^N)$, the first term on the right-hand side vanishes:
$$
\int_{\mathcal{F}^N}\nabla u^N: \nabla w =\overset{N}{\underset{j=1}{\sum}} \int_{\mathcal{F}^N} \nabla u^N: \nabla \widehat{w}^N_j:= \overset{N}{\underset{j=1}{\sum}}  I_j^N. 
$$

Let denote now by $(w_j^N,\pi_j^N)$ the unique solution to: 
\begin{equation} \label{eq_stokes3}
\left\{
\begin{array}{rcl}
-\Delta w_j^N + \nabla {\pi_j^N} &=& 0 , \\[8pt]
{\rm div} \, w_j^N &= & 0 ,
\end{array}
\right.
\quad \text{ on $\Omega^{N}_{j},$}
\end{equation}
completed with boundary conditions:
\begin{equation} \label{cab_stokes3}
\left\{
\begin{array}{rcll}
w_j^N(x)= &=& w(x^N_j) , &  \text{on $\partial  B_j^{N},$}  \\[8pt]
w_j^N(x) &=& 0 , & \text{on $\partial_{e}\Omega_{j}^N$}.
\end{array}
\right.
\end{equation}

For arbitrary $j=1,\dots,N,$ we have 
$$
I_j^N =  \int_{\Omega_j^N} \nabla u^N: \nabla ( \widehat{w}^N_j - w^N_j) + \int_{\Omega_j^N}\nabla u^N: \nabla w^N_j. 
$$
and we set 
$$
R1_j^N:= \int_{\Omega_j^N} \nabla u^N: \nabla ( \widehat{w}^N_j - w^N_j).
$$
Because $w_j^N$ is a solution to \eqref{eq_stokes3} and  $ u \in H^1(\Omega_j^N)$ is divergence-free we have also that: 
$$
I_j^N=\int_{ \partial B_j^N} [(\nabla w_j^N - \pi_j^N \mathbb{I})\cdot n] \cdot u^N d\sigma + \int_{\partial_e \Omega_{j}^N} [(\nabla w_j^N - \pi_j^N \mathbb{I}) \cdot n] \cdot u^N d\sigma +R1_j^N.
$$
In the first integral, we note that $u^N = V_j^N$ on $\partial B_j^N.$ We then introduce: $$ F_j^N=\int_{ \partial B_j^N} (\nabla w_j^N - p \mathbb{I})\cdot n d\sigma$$
to rewrite the first term: $$ \int_{ \partial B_j^N} [(\nabla w_j^N - \pi_j^N \mathbb{I})\cdot n] \cdot u^N d\sigma = F_j^N \cdot V_j^N.$$

\medskip

As for the second term, we have (recall \eqref{couronnes} for the definition of $\bar{u}_j^N$): 
$$
\int_{\partial_e \Omega_{j}^N} [(\nabla w_j^N - \pi_j^N \mathbb{I}) \cdot n] \cdot u^N d\sigma   = \int_{\partial_e \Omega_{j}^N} [(\nabla w_j^N - \pi_j^N \mathbb{I}) \cdot n] \cdot {\bar{u}}_j^N d\sigma + R2^N_j,
$$
where 
$$
R2_j^N= \int_{\partial_e \Omega_{j}^N} [(\nabla w_j^N - \pi_j^N \mathbb{I}) \cdot n] \cdot (u^N-{\bar u}_j^N) d\sigma. 
$$

\medskip

At this point, we remark that the Stokes system is the divergence form of the conservation of the normal stresses. This yields that:
$$ 
\int_{ \partial B_j^N} (\nabla w_j^N - \pi_j^N \mathbb{I})\cdot n  d\sigma + \int_{\partial_e \Omega_{j}^N} (\nabla w_j^N - \pi_j^N \mathbb{I}) \cdot n  d\sigma = 0.
$$
Consequently, we obtain that:
$$
\int_{\partial_e \Omega_j^N} [(\nabla w_j^N - \pi_j^N \mathbb{I}) \cdot n] \cdot u^N d\sigma  =  R2_j^N - F_j^N \cdot \bar{u}_j^N .
$$
Eventually, plugging the identities above in $\int_{\mathcal F^N} \nabla u^{N}: \nabla w$ yields that:
\begin{eqnarray} \label{egalite_finale}
\int_{\Omega} \nabla v^N: \nabla w 
& =&   \underset{j=1}{\overset{N}{\sum}}  F_{j}^{N} \cdot V^N_j -  \int_{\Omega} j(x) \cdot w(x) {\rm d}x \\
& &-  \left[  \underset{j=1}{\overset{N}{\sum}}  F_{j}^{N} \cdot \bar{u}^N_{j}
 -  \int_{\Omega} \rho(x) \bar{u}(x) \cdot w(x){\rm d}x \right]   \notag \\[8pt]
& &+   R1^{N} + R2^{N}, \notag 
\end{eqnarray}
where:
$$
R1^{N}:= \sum_{j=1}^{N}R1_{j}^{N},  \quad R2^{N}:= \sum_{j=1}^N R2_{j}^{N}.
$$

\subsection{Estimates applied in both proofs.}
We state and prove here several propositions that are useful in the proof of both theorems. 
\begin{prop}\label{prop2}
There holds:
$$ \Big | \underset{k=1}{\overset{N}{\sum}}F_k^N \cdot v_k^N-  \int_\Omega j(x)\cdot w(x) {\rm d}x\Big| \lesssim \left ( \frac{E_0}{N^{2/3}}+ \|j-j^N\|_{(\mathcal{C}^{0,1}(\bar{\Omega}))^*} \right ) \|w\|_{2,p'}.
$$
\end{prop}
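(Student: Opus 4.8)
The plan is to replace each discrete force $F_k^N$ by its Stokes-law value and then to recognize the resulting principal sum as the action of the empirical flux $j^N$ on the test-field $w$. First I would observe that, after translating to the center $x_j^N$, the pair $(w_j^N,\pi_j^N)$ solving \eqref{eq_stokes3}-\eqref{cab_stokes3} is exactly the truncated exterior Stokes problem \eqref{eq_stokestruncated}-\eqref{cab_stokestruncated} with inner radius $r=r_j^N/N$, outer radius $R=C_0/(2N^{1/3})$ and constant boundary velocity $V=w(x_j^N)$ (the point value is licit since $W^{2,p'}(\Omega)\hookrightarrow \mathcal C^{0,1}(\bar\Omega)$). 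Thus $F_j^N$ is the quantity $F_R^r$ of Lemma \ref{lemma1}, whose Stokes-law limit is $F^r=6\pi r V=6\pi (r_j^N/N)\, w(x_j^N)$. Lemma \ref{lemma1} then gives
\[
\left| F_j^N - 6\pi \frac{r_j^N}{N}\, w(x_j^N)\right| \lesssim \left(\frac{r_j^N}{N}\right)^2 \frac{|w(x_j^N)|}{C_0/(2N^{1/3})} \lesssim \frac{(r_j^N)^2}{N^{5/3}}\,|w(x_j^N)|,
\]
which by \eqref{hyp2} is bounded by $N^{-5/3}|w(x_j^N)|$ up to a constant depending only on $R_0,C_0$.

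Second, I would split the sum into a principal part and a remainder,
\[
\sum_{k=1}^N F_k^N\cdot V_k^N = \frac{6\pi}{N}\sum_{k=1}^N r_k^N\, w(x_k^N)\cdot V_k^N + \sum_{k=1}^N \left(F_k^N - 6\pi \frac{r_k^N}{N}\, w(x_k^N)\right)\cdot V_k^N,
\]
and estimate the remainder by the previous step:
\[
\left|\sum_{k=1}^N \left(F_k^N - 6\pi \frac{r_k^N}{N}\, w(x_k^N)\right)\cdot V_k^N\right| \lesssim \frac{\|w\|_\infty}{N^{5/3}}\sum_{k=1}^N |V_k^N|.
\]
Cauchy--Schwarz combined with the energy bound \eqref{hyp3} gives $\sum_{k} |V_k^N| \leq N^{1/2}\bigl(\sum_k |V_k^N|^2\bigr)^{1/2}\leq N E_0$, and the embedding $W^{2,p'}(\Omega)\hookrightarrow L^\infty(\Omega)$ converts $\|w\|_\infty$ into $\|w\|_{2,p'}$, so that the remainder is $\lesssim (E_0/N^{2/3})\|w\|_{2,p'}$.

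Finally, the principal term $\tfrac{6\pi}{N}\sum_k r_k^N\, w(x_k^N)\cdot V_k^N$ is, by the very definition of the empirical flux, nothing but the pairing $\langle j^N,w\rangle$; its difference with $\int_\Omega j\cdot w$ is therefore $\langle j^N-j,w\rangle$, which I bound in the bounded-Lipschitz duality:
\[
\left|\langle j^N-j,w\rangle\right| \leq \|j^N-j\|_{(\mathcal C^{0,1}(\bar\Omega))^*}\, \|w\|_{\mathcal C^{0,1}(\bar\Omega)} \lesssim \|j^N-j\|_{(\mathcal C^{0,1}(\bar\Omega))^*}\,\|w\|_{2,p'},
\]
using once more $W^{2,p'}(\Omega)\hookrightarrow \mathcal C^{0,1}(\bar\Omega)$. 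Adding the two contributions yields the claim. I expect the main obstacle to be the first step: one must correctly match the localized problem \eqref{eq_stokes3} to the scaled truncated exterior problem and carefully track the powers of $N$ through Stokes' law and Lemma \ref{lemma1}, since a slip in the scaling of $r=r_j^N/N$ against $R=C_0/(2N^{1/3})$ would change the final exponent $N^{-2/3}$. The two remaining steps are the routine pairing of Cauchy--Schwarz with \eqref{hyp3} and of the duality $(\mathcal C^{0,1})^*$ with the Sobolev embedding.
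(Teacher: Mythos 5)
Your proof is correct and follows essentially the same route as the paper's: both split each $F_k^N$ into the Stokes-law principal term $\tfrac{6\pi}{N} r_k^N w(x_k^N)$ plus an error controlled by Lemma \ref{lemma1}, identify the principal sum with the pairing $\langle j^N,w\rangle$, and conclude via Cauchy--Schwarz with \eqref{hyp3} and the embedding $W^{2,p'}(\Omega)\hookrightarrow \mathcal C^{0,1}(\bar{\Omega})$. The only cosmetic difference is that the paper first rescales to the fields $(W_j^N,\Pi_j^N)$ with inner radius $r_j^N$ and outer radius of order $C_0 N^{2/3}$ before invoking Lemma \ref{lemma1}, whereas you apply the lemma directly at the original scale $r=r_j^N/N$, $R=C_0/(2N^{1/3})$; the two are equivalent since the lemma holds for arbitrary $r<R$ and yields the same exponent $N^{-5/3}$ per particle, hence $N^{-2/3}$ after summation.
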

\begin{proof}
We define $(W_j^N,\Pi_j^N)$  by: 
\begin{equation} \label{defW}
(w^N_{j}(x),\pi^N_{j}(x)) = (W^N_{j}(N(x - x^N_{j})),N \Pi^N_{j}(N(x-x^N_j))), \quad \forall  x \in \Omega_{j}^N.
\end{equation}
We note that, substituting in the integral yields:
$$
\int_{\partial  B_{j}^{N}} (\nabla w^N_{j} - \pi^N_{j} \mathbb{I}) \cdot n d \sigma = \frac{1}{N} \int_{\partial  B(0,r_j^N)} (\nabla W^N_{j} - \Pi^N_{j} \mathbb{I}) \cdot n d \sigma, 
$$
and:
 \begin{align*} F_j^N & =  \frac{1}{N} \int_{\partial  B(0,r_j^N)}(\nabla W^N_{j}- \Pi^N_{j} \mathbb{I}) \cdot n d \sigma \\ 
& =  \frac{1}{N} \left(\int_{\partial  B(0,r_j^N)}(\nabla W^N_{j}- \Pi^N_{j} \mathbb{I}) \cdot n d \sigma - 6\pi  r_j^Nw(x_j^N) \right) + \frac{6\pi}{N} r^N_j w(x_j^N).
\end{align*}
We remark then that $(W_j^N,\Pi_j^N)$ is solution to:
\begin{equation} \label{eq_stokesunboundedproof}
\left\{
\begin{array}{rcl}
- \Delta W_j^N + \nabla \Pi_j^N &=& 0,  \\[8pt]
{\div} \, W_j^N &= & 0 ,
\end{array}
\right.
\quad \text{ on $B(0,C_0/N^{2/3}) \setminus B(0,r_j^N)$},
\end{equation}
completed with boundary conditions:
\begin{equation} \label{cab_stokesunboundedW}
W_j^N(x) = w(x_j^N),\:   \text{on $\partial B(0,r_j^N),$} \qquad 
W_j^N(x) = 0, \: \text{ on $\partial B(0,C_0/N^{2/3}),$}
\end{equation}
so that Lemma \ref{lemma1} applies. Assumptions \eqref{hyp2} and \eqref{hyp3} then entail that:
\begin{align*} \left | \underset{k=1}{\overset{N}{\sum}} F_k^N \cdot v_k^N  -  \int_{\Omega} j \cdot w \right | & = \Bigg |  \frac{1}{N} \left( \sum_{j=1}^N\int_{\partial  B(0,r_j^N)}(\nabla W^N_{j}- P^N_{j} \mathbb{I}) \cdot n d \sigma -  6\pi r_j^Nw(x_j^N) \right)\cdot v_k^N  \\ 
& +  \frac{6\pi}{N}\underset{k=1}{\overset{N}{\sum}} r_k^N [w(x_k^N)] \cdot v_k^N -  \int_{\Omega} j \cdot w \Bigg | \\ 
& \lesssim  \frac{1}{N} \underset{k=1}{\overset{N}{\sum}} \frac{|r_k^N|^2}{N^{2/3}}|w(x_k^N)||v_k^N|  +  | \langle j^N - j  ,w  \rangle | \\ 
& \lesssim  \frac{E_0}{N^{2/3}}  \|w\|_{{\infty}} +  \|j^N - j \|_{(\mathcal{C}^{0,1}(\bar{\Omega}))^*} \|w\|_{\mathcal{C}^{0,1}(\bar{\Omega})}.
\end{align*}
We conclude the proof by applying the embedding $W^{2,p'}(\Omega) \subset C^{0,1}(\bar{\Omega}).$ 

\end{proof}

\begin{rem}\label{rem_prop2}
A more general estimate can be proved when $p\in ]3/2,3[$. Indeed, in this case we have the Sobolev embedding  $W^{2,p'}(\Omega)\hookrightarrow \mathcal{C}^{0,\min(1, \alpha_p)}(\bar{\Omega})$ with $\alpha_p:= 2-\frac{3}{p'}=-1+\frac{3}{p} \in (0,1)$. Hence, in the last list of inequality, we may 
bound:
$$ 
| \langle j^N - j  ,w  \rangle | \leq \|j^N-j\|_{(\mathcal C^{0,\alpha_p}(\bar{\Omega})) ^*} \|w\|_{2,p'}.
$$

\end{rem}

We complete the joint part of our main proofs by showing that both $R1^N$ and $R2^N$ vanish when $N \to \infty$. First, we have the following proposition: 
\begin{prop}\label{prop4}
There holds
$$ |R1^N| \lesssim \frac{E_0\|w\|_{2,p'}}{N}.
$$
\end{prop}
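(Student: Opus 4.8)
The plan is to control each term $R1_j^N$ by Cauchy--Schwarz, and then to sum the resulting bounds exploiting the disjointness of the annuli $\Omega_j^N$ together with the uniform energy bound of Proposition \ref{prop_unifbound}. Writing
$$
|R1_j^N| \leq \|\nabla u^N\|_{L^2(\Omega_j^N)}\, \|\nabla(\widehat{w}_j^N - w_j^N)\|_{L^2(\Omega_j^N)},
$$
a second application of Cauchy--Schwarz over $j$ gives
$$
|R1^N| \leq \Big(\sum_{j=1}^N \|\nabla u^N\|_{L^2(\Omega_j^N)}^2\Big)^{1/2} \Big(\sum_{j=1}^N \|\nabla(\widehat{w}_j^N - w_j^N)\|_{L^2(\Omega_j^N)}^2\Big)^{1/2}.
$$
Since the $\Omega_j^N$ are pairwise disjoint subsets of $\mathcal F^N$ on which $\nabla E_\Omega[u^N] = \nabla u^N$, the first factor is bounded by $\|\nabla u^N\|_{L^2(\mathcal F^N)} = \|E_\Omega[u^N]\| \lesssim E_0$ by Proposition \ref{prop_unifbound}. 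It therefore remains to establish the per-hole estimate
$$
\|\nabla(\widehat{w}_j^N - w_j^N)\|_{L^2(\Omega_j^N)} \lesssim \|w\|_{2,p'}\,\Big(\frac{r_j^N}{N}\Big)^{3/2},
$$
since summing the squares of these $N$ bounds and using $r_j^N \leq R_0$ yields $\big(\sum_j \|\nabla(\widehat w_j^N - w_j^N)\|_{L^2}^2\big)^{1/2} \lesssim \|w\|_{2,p'}/N$, which closes the argument.

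The heart of the matter is this per-hole gradient estimate, which I would obtain variationally. The field $\widehat{w}_j^N - w_j^N$ is the Stokes flow on $\Omega_j^N$ with boundary values $w - w(x_j^N)$ on $\partial B_j^N$ and $0$ on $\partial_e \Omega_j^N$, so by the minimisation characterisation of Theorem \ref{theorem_existence}(iii) (applied with $j=0$, $\rho=0$) its Dirichlet energy is bounded by that of any divergence-free competitor sharing these boundary values. I would build such a competitor supported in the thin annulus $A_j := A(x_j^N, r_j^N/N, 2r_j^N/N)$. The crucial geometric point is that $A_j \subset \Omega_j^N$ precisely because $N \geq N_0 = (4R_0/C_0)^{3/2}$ forces $2r_j^N/N \leq C_0/(2N^{1/3})$. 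Setting $a := r_j^N/N$, I would pick a radial cutoff $\chi$ equal to $1$ on $\partial B_j^N$, supported in $A_j$, with $|\nabla\chi| \lesssim 1/a$, and let $\psi_0 := \chi\,(w - w(x_j^N))$. On the support of $\chi$ one has $|x - x_j^N| \leq 2a$, so the embedding $W^{2,p'}(\Omega) \hookrightarrow \mathcal C^{0,1}(\bar\Omega)$ gives $|w(x) - w(x_j^N)| \lesssim \|w\|_{2,p'}\,a$; combined with $|\nabla\chi| \lesssim 1/a$ and $|\nabla w| \lesssim \|w\|_{2,p'}$ this yields $|\nabla\psi_0| \lesssim \|w\|_{2,p'}$ pointwise on $A_j$, whence $\|\nabla\psi_0\|_{L^2(\Omega_j^N)}^2 \lesssim \|w\|_{2,p'}^2\, a^3$.

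The field $\psi_0$ carries the correct boundary values but is not divergence-free; since $w$ is divergence-free, $\div\psi_0 = \nabla\chi\cdot(w - w(x_j^N))$, which obeys the same $L^2$ bound and has zero mean on $A_j$ (because $\int_{\partial B_j^N}(w - w(x_j^N))\cdot n = \int_{B_j^N}\div w = 0$ and $\int_{\partial B_j^N} n\, {\rm d}\sigma = 0$). I would then set $\psi := \psi_0 - \mathfrak B_{x_j^N, a, 2a}[\div\psi_0]$. The scale-invariance of the $L^2 \to \dot W^{1,2}$ estimate for the Bogovskii operator on the fixed-shape annulus $A(0,1,2)$ (see \cite[Appendix A]{hillairet}) gives $\|\nabla\mathfrak B_{x_j^N,a,2a}[\div\psi_0]\|_{L^2} \lesssim \|\div\psi_0\|_{L^2} \lesssim \|w\|_{2,p'}\,a^{3/2}$, with a constant independent of $j,N$. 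Since the corrector vanishes on $\partial A_j$ it does not alter the boundary values once extended by zero, so $\psi$ is an admissible divergence-free competitor with $\|\nabla\psi\|_{L^2(\Omega_j^N)} \lesssim \|w\|_{2,p'}\,a^{3/2}$; minimality then transfers this bound to $\widehat{w}_j^N - w_j^N$ and proves the per-hole estimate. The only delicate point is guaranteeing that the Bogovskii constant is uniform in $j$ and $N$, which is exactly what the scaling argument on the normalised annulus supplies; everything else is bookkeeping of powers of $a = r_j^N/N$.
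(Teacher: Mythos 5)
Your proposal is correct and follows essentially the same route as the paper's proof: Cauchy--Schwarz combined with the uniform energy bound of Proposition \ref{prop_unifbound}, then a per-hole estimate on $\|\nabla(\widehat{w}_j^N - w_j^N)\|_{L^2(\Omega_j^N)}$ obtained variationally from a cutoff lifting of the boundary data $w - w(x_j^N)$, corrected to be divergence-free by a Bogovskii operator on a thin annulus with scale-invariant constant. The only cosmetic difference is that you localize at scale $r_j^N/N$ while the paper uses the uniform scale $R_0/N$ (with cutoff $\chi(N(x-x_j^N))$), which changes nothing since $r_j^N \leq R_0$.
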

\begin{proof}
We remind that: 
 $$
 R1^{N} = \sum_{j=1}^N \int_{\Omega} \nabla E_{\Omega}(u^N): \nabla   (\widehat{w}^N_{j}- w^N_{j}).
 $$
We set  $\tilde{w}^N_{j}$ the difference $\widehat{w}^N_{j}- w^N_{j}$, hence: 

 \begin{eqnarray} \notag
 |R1^{N}| &\leq& \|\nabla E_{\Omega}(u^N)\|_ {L^2(\Omega)} \left ( \sum_{j=1}^N \|\nabla \tilde{w}^N_{j} \|^2_{L^2(\Omega^N_{j})} \right )^{1/2} \\
       & \lesssim & E_0 \left (\sum_{j=1}^N \|\nabla \tilde{w}^N_{j} \|^2_{L^2(\Omega^N_{j})} \right )^{1/2} \label{eq_R1first},
 \end{eqnarray}
because of the bound on $E_\Omega[u^N]$ that we obtained in Proposition \ref{prop_unifbound}. 

\medskip

At this point, we remark that, for $j\in \{1,\dots,N\} $ the $\tilde{w}^N_j$ can be associated with a pressure $\tilde{\pi}^N_j$ (namely $\widehat{\pi}^N_j-\pi^N_j$) to get
the unique solution to the Stokes problem:   
\begin{equation} \label{eq_stokes5}
\left\{
\begin{array}{rcl}
- \Delta  \tilde{w}^N_j + \nabla \tilde{\pi}^N_j &=& 0 , \\[8pt]
{\div}\,\tilde{w}^N_j &= & 0 ,
\end{array}
\right.
\quad \text{ on $\Omega^N_{j}$},
\end{equation}
completed with boundary conditions:
\begin{equation} \label{cab_stokes5}
\left\{
\begin{array}{rcll}
\tilde{w}^N_j(x) &=& w(x) - w(x_j) , &  \text{on $\partial B_j^{N}$} , \\[4pt]
\tilde{w}^N_j&=& 0  & \text{on $\partial_e \Omega_j^N$}.
\end{array}
\right.
\end{equation}
The aim is to bound the $H^1_0(\Omega^N_{j})$-norm of $\tilde{w}^N_j$ by constructing a lifting of boundary conditions \eqref{cab_stokes5} and using the variational characterization of $\tilde{w}^N_j$ solution to \eqref{eq_stokes5}-\eqref{cab_stokes5}.

\medskip 

Let $\chi$ be a truncation function equal to 1 on $B(0,R_0)$ and vanishing outside $B(0, 2R_0)$. We set
$\chi^{N}:= \chi(N (x-x^N_j))$ and we denote $v = v_1  +v_2$ where: 
\begin{eqnarray*}
v_1(x) &=& \chi^N(x) (w(x) - w(x^N_j)), \quad \forall  x \in \Omega^N_{j}, \\
v_2  & = & \mathfrak B_{x_j,R_0/N,2R_0/N}[-{\rm div}(v_1)],
\end{eqnarray*}
with $\mathfrak B$ the bogovskii operator (see \cite[Section III.3]{Galdi}). Because $ \div(v_1) = \nabla \chi^{N} \cdot (w - w(x^N_j)) $  has mean $0$ on $\Omega_j^N,$ the vector-field $v_2$ is well-defined. We may then apply \cite[Appendix A, Lemma 15]{hillairet} to get that: 
\begin{align*} 
\int_{\Omega^N_j} |\nabla v|^2  
&  \lesssim  \int_{A(x_j^N,R_0/N,2R_0/N)} |\nabla w(x)|^2 \\
& + N^2 \int_{A(x_j^N,R_0/N,2R_0/N)} |w(x)-w(x_j^N)|^2 \underset{x \in B(0,2R_0)}{sup} |\nabla \chi (x)|^2 \\
& \lesssim  \frac{1}{N^3} \|w\|^2_{W^{2,p'}(\Omega)} .
\end{align*}
We applied here again the embedding $W^{2,p'}(\Omega) \hookrightarrow \mathcal{C}^{0,1}(\bar{\Omega}) $ for $p' > 3$. Finally, we have
$$
 |R1^N| \lesssim \frac{E_0}{N} \|w\|_{W^{2,p'}(\Omega)}.
$$
\medskip
This ends the proof of our estimate.
\end{proof}

\begin{rem}\label{rem_R1^N}
As in Remark \ref{rem_prop2}, a more general result can be obtained for all $p\in]3/2,3[$. In this case, we have that $W^{2,p'}(\Omega)\hookrightarrow \mathcal{C}^{0,\alpha_p}(\bar{\Omega})$, which provides a more general bound for the error term $R1^N$ of the form $\frac{1}{N^{\alpha_p}}$.
\end{rem}

In order to compute the second error term, we need the following lemma. 
We recall that the annuli $A_j^N$ are defined in \eqref{couronnes}. We keep the
convention that $\partial_e A_j^N$ stands for the external sphere bounding $A_{j}^{N}.$ 
\begin{lemme}
For $j=1,\ldots,N,$ let $v^N_{j} \in H^{1}(A_j^N)$ 
satisfy:
 \begin{itemize}
 \item $\div v^N_{j} = 0 $ on $A_j^N;$
 \item the flux of $v^N_{j}$ through the exterior boundary of $A_j^N$  vanishes:
 $$
 \int_{\partial_e A_{j}^N} v^N_{j} \cdot n {\rm d}\sigma =0;
 $$
 \item the mean of $v^N_{j}$ on $A_j^N$vanishes. 
 \end{itemize}
 Then, there holds: 
$$ \label{eq_casA}
 \left| \sum_{j=1}^{N} \int_{\partial_e \Omega_j^N}(\nabla w^N_{j} - \pi^N_{j}\mathbb I)n\cdot v^N_{j} {\rm d}\sigma \right| \lesssim  
  \left( \sum_{j=1}^{N} \|\nabla v^N_{j}\|_{L^2(A_j^N)}^2\right)^{\frac 12} 
  \left(\sum_{j=1}^N \|\nabla w^N_{j}\|_{L^2(A_j^N)}^2 \right)^{\frac 12} .
 $$
  \end{lemme}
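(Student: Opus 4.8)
We have, for each $j$, a Stokes solution $(w_j^N,\pi_j^N)$ on the annulus $\Omega_j^N$ (boundary value $w(x_j^N)$ on the inner sphere, $0$ on the outer), and a divergence-free field $v_j^N$ on the smaller annulus $A_j^N \subset \Omega_j^N$ with zero flux through $\partial_e A_j^N$ and zero mean. We want to bound the sum of boundary integrals of the normal stress of $w_j^N$ against $v_j^N$ on $\partial_e\Omega_j^N$.

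The plan and the key steps. The boundary integral is currently written on the outer sphere $\partial_e\Omega_j^N$ of the large annulus $\Omega_j^N$, but $v_j^N$ is only defined on the inner annulus $A_j^N$, so as a first step I would rewrite each term so that everything lives on $A_j^N$. The natural move is to observe that on the region $\Omega_j^N\setminus \overline{A_j^N}$ lying between the outer sphere of $A_j^N$ and $\partial_e\Omega_j^N$, the pair $(w_j^N,\pi_j^N)$ still solves the homogeneous Stokes system, and I would integrate by parts to transport the surface integral from $\partial_e\Omega_j^N$ onto $\partial_e A_j^N$. Concretely, I would test the Stokes equation for $w_j^N$ against a suitable divergence-free extension of $v_j^N$ into that annular shell: since $-\Delta w_j^N+\nabla\pi_j^N=0$ and $\operatorname{div} w_j^N=0$, integrating by parts converts the difference of the two boundary integrals (on $\partial_e\Omega_j^N$ and on $\partial_e A_j^N$) into a volume integral $\int \nabla w_j^N:\nabla(\text{ext. of }v_j^N)$ over the shell. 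The zero-flux hypothesis on $v_j^N$ is exactly what guarantees such a divergence-free extension exists.

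Bounding the transported term. Once the integral is expressed on $A_j^N$ (or on the shell), I would use that the surface integral $\int_{\partial_e A_j^N}(\nabla w_j^N-\pi_j^N\mathbb I)n\cdot v_j^N\,{\rm d}\sigma$ can, by the same integration-by-parts on the shell, be recast as $\int \nabla w_j^N : \nabla v_j^N$ over $A_j^N$ plus a pressure contribution. The pressure term is controlled because $v_j^N$ is divergence-free and has zero mean — subtracting the mean of $\pi_j^N$ costs nothing against a mean-zero, divergence-free field, and a Poincaré-type estimate handles the rest. This reduces the single-term estimate to
$$\left| \int_{\partial_e \Omega_j^N}(\nabla w^N_{j} - \pi^N_{j}\mathbb I)n\cdot v^N_{j} \,{\rm d}\sigma \right| \lesssim \|\nabla w_j^N\|_{L^2(A_j^N)}\,\|\nabla v_j^N\|_{L^2(A_j^N)},$$
where the implicit constant must be shown scale-invariant (independent of $N$). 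The final summation over $j$ is then just Cauchy--Schwarz, producing the stated product of $\ell^2$-norms.

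The main obstacle. The delicate point is uniformity in $N$: each annulus $A_j^N$ has radii of order $N^{-1/3}$, so the constants in the Poincaré inequality and in the trace/integration-by-parts estimates a priori degenerate as $N\to\infty$. I would handle this exactly as the rest of the paper does — by the rescaling $x\mapsto N(x-x_j^N)$ introduced in \eqref{defW}, which maps $A_j^N$ to a \emph{fixed} annulus independent of $N$ (with inner/outer radii of order $C_0$), turning every $N$-dependent constant into a universal one. On the rescaled fixed annulus the estimate is a standard elliptic/trace bound, and I would check that the gradient $L^2$-norms on both sides scale identically under $x\mapsto N(x-x_j^N)$ so that the inequality passes back to $A_j^N$ with an $N$-independent constant. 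The zero-mean and zero-flux hypotheses on $v_j^N$ are precisely the conditions that make the rescaled estimate hold on the model annulus.
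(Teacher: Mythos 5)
There is a genuine gap, and it originates in a misreading of the geometry. By definition $A_j^N = A(x_j^N, C_0/4N^{1/3}, C_0/2N^{1/3})$ while $\Omega_j^N = B(x_j^N, C_0/2N^{1/3})\setminus B_j^N$, so the two external spheres \emph{coincide}: $\partial_e A_j^N = \partial_e \Omega_j^N$. There is no shell ``between the outer sphere of $A_j^N$ and $\partial_e\Omega_j^N$''; the region $\Omega_j^N\setminus\overline{A_j^N}$ is the annulus $A(x_j^N, r_j^N/N, C_0/4N^{1/3})$, which sits against $\partial B_j^N$. Hence your first step (transporting the integral from $\partial_e\Omega_j^N$ to $\partial_e A_j^N$) is vacuous, and your second step inherits the real difficulty without resolving it: integrating by parts over $A_j^N$ against $v_j^N$ itself does produce $\int_{A_j^N}\nabla w_j^N:\nabla v_j^N$ (and the pressure contribution vanishes identically because $\div v_j^N=0$ pointwise --- no zero-mean argument is involved there), but it also produces the boundary term $\int_{\partial_i A_j^N}(\nabla w_j^N-\pi_j^N\mathbb I)n\cdot v_j^N\,{\rm d}\sigma$ on the \emph{internal} sphere of $A_j^N$, which your proposal never addresses. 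Estimating that term directly would require trace bounds on $\nabla w_j^N$ and $\pi_j^N$ on a sphere that can be arbitrarily close to $\partial B_j^N$ (at $N=N_0$ with $r_j^N=R_0$ the gap closes entirely), and nothing in your argument supplies them.

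The paper's proof eliminates this inner boundary term by construction rather than estimation: it tests the Stokes system on all of $\Omega_j^N$ against a lifting $\tilde v_j = \chi_j^N v_j^N + \mathfrak B[-\div(\chi_j^N v_j^N)]$, where $\chi_j^N=\chi(N^{1/3}(x-x_j^N))$ vanishes near the internal region and equals $1$ on the external sphere, and the Bogovskii correction --- well defined precisely because of the zero-flux hypothesis --- restores the divergence-free condition. Then only the desired external boundary term survives, and it equals $\int_{A_j^N}\nabla w_j^N:\nabla\tilde v_j$. The price is the cutoff term $\nabla\chi_j^N\otimes v_j^N$, of size $N^{1/3}\|v_j^N\|_{L^2(A_j^N)}$, and \emph{this} is where the zero-mean hypothesis enters, via the Poincar\'e--Wirtinger inequality $\|v_j^N\|_{L^2(A_j^N)}\lesssim N^{-1/3}\|\nabla v_j^N\|_{L^2(A_j^N)}$ --- not via any pressure normalization, as you suggest. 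Finally, your uniformity argument also needs repair: the change of variables $x\mapsto N(x-x_j^N)$ that defines $W_j^N$ maps $A_j^N$ to $A(0, C_0N^{2/3}/4, C_0N^{2/3}/2)$, which is not a fixed annulus; the scaling adapted to $A_j^N$ is $x\mapsto N^{1/3}(x-x_j^N)$, consistent with the cutoff and the Poincar\'e constant above.
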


\begin{proof}
We begin by introducing a suitable lifting  of ${v^N_{j}}{|_{\partial_e A_{j}^N}}.$

Namely, we introduce a truncation function $\chi$ such that $\chi$ vanishes on $B(0,C_0/4)$ and is equal to $1$ outside $B(0,C_0/3)$. For $j \in \{1,\ldots,N\},$ we denote
$\chi^N_{j} = \chi(N^{1/3}(x-x_j^N))$ and we set:
$$
\tilde{v}_{j} = \tilde{v}_{j,1} + \tilde{v}_{j,2},
$$
where:
$$
\tilde{v}_{j,1} = \chi^N_{j} v^N_{j},
\qquad
\tilde{v}_{j,2} = \mathfrak B_{x^N_j,C_0/4N^{1/3},C_0/2N^{1/3}}[-\div \tilde{v}_{j,1}].
$$

As, by assumption, we have that $v^N_j$ has flux zero on $\partial_e A_{j}^N$ we obtain that $\div \tilde{v}_{j,1}$ has mean zero on $A_j^N$ 
and $\tilde{v}_{j,2}$ is well-defined. For convenience, we also set:
$$
w^N = \sum_{j=1}^{N} \mathbf{1}_{\Omega^N_{j}} w^N_{j}, \quad 
D\tilde{v} = \sum_{j=1}^{N} \mathbf{1}_{\Omega^N_{j}} \nabla \tilde{v}_{j}.
$$ 

At this point, we note that: 
\begin{itemize}
\item on $\partial B_j^N \subset B(x_j^N,C_0/4N^{1/3})$ we have $\chi^N_j=0$ so that $\tilde{v}_{j,1}=0$. As $\tilde{v}_{j,2}=0$ by construction, we get $\tilde{v}_j=0,$ \\[-8pt]
\item on $\partial_e A_{j}^N = \partial_{e} \Omega_j^N,$ we have $\chi^N_j=1$ so that $\tilde{v}_{j,1}=v_j^N$. As, by construction, $\tilde{v}_{j,2}=0,$ we get $ v_j^N=\tilde{v}_j.$ 
\end{itemize}
These remarks entail that:
\begin{align*}
\sum_{j=1}^{N}  \int_{\partial_e A_j^N}(\nabla w^N_{j} - \pi^N_{j}\mathbb I)n\cdot v^N_{j} {\rm d}\sigma&= \sum_{j=1}^{N}  \int_{\partial \Omega^N_{j}}(\nabla w^N_{j} - \pi^N_{j}\mathbb I)n\cdot \tilde{v}_{j} \, {\rm d}\sigma \\ 
&= \sum_{j=1}^{N} \int_{\Omega^N_{j}} \nabla w^N_{j}: \nabla \tilde{v}_{j},\\ 
&= \int_{\Omega} \mathbf{1}_{Supp(D\tilde{v})}\nabla w^N: D\tilde{v} .
\end{align*}

Consequently, we have:
$$
\left| \sum_{j=1}^{N}  \int_{\partial_e \Omega_j^N}(\nabla w^N_{j} - \pi^N_{j}\mathbb I)n\cdot v_{j}^N {\rm d}\sigma \right|
\leq \left( \int_{\Omega}  \mathbf{1}_{Supp(D\tilde{v})} |\nabla  w^N|^2\right)^{\frac{1}{2}} \left(  \int_{\Omega} |D\tilde{v}|^2\right)^{\frac{1}{2}}.
$$
Due to the fact that the supports of the $\Omega_j^N$ are disjoint and cover the support of $D\tilde{v}$, we have:
$$
\int_{\Omega} \mathbf{1}_{Supp(D\tilde{v})} |\nabla  w^{N}|^2 = \sum_{j=1}^{N} \int_{\Omega^N_{j}}  \mathbf{1}_{Supp(D\tilde{v})}  |\nabla w^N_{j}|^2 
$$
where $Supp(D\tilde{v}) \cap \Omega^N_{j}=A_j^N $ so that
$$\int_{\Omega} \mathbf{1}_{Supp(D\tilde{v})} |\nabla  w^N|^2 = 
\sum_{j=1}^{N} \int_{A_j^N} |\nabla w^N_{j}|^2.
$$
With a similar decomposition, we obtain also:
\begin{align*}
\int_{\Omega}  |D\tilde{v}|^2 &= \sum_{j=1}^{N}\int_{A_j^N} |\nabla \tilde{v}_j|^2, \\
&\leq  2\sum_{j=1}^N \int_{A_j^N} |\nabla \tilde{v}_{j,1}|^2 + |\nabla \tilde{v}_{j,2}|^2. 
\end{align*}

As in the proof of the previous proposition, we compute the terms $\nabla \tilde{v}_{j,1}$, $\nabla \tilde{v}_{j,2}$ and use estimates on Bogovskii operator (see \cite[Appendix A, lemma 15]{hillairet}) to get that there exists a positive constant $K_\chi$ such that: 
$$
\int_{A_j^N} |\nabla \tilde{v}_{j,1}|^2 + |\nabla \tilde{v}_{j,2}|^2
 \\
 \leq K_{\chi} \left( \int_{A_j^N} N^{2/3} |v^N_{j}|^2 +
\int_{A_j^N}  |\nabla v^N_{j}|^2\right).
$$
Finally, we apply the Poincar\'e-Wirtinger inequality in the case of annuli (see \cite[Appendix A, Lemma 13]{hillairet}):
there exists a constant $C>0$ independent of $N$ for which:
$$
\int_{A_j^N} |v^N_{j}|^2  \leq C  \left ( \frac{C_0}{2N^{1/3}} \right )^2
\int_{A_j^N} |\nabla v^N_{j}|^2  .
$$ 
Finally we get that 
$$
 \int_{\Omega} |D\tilde{v}|^2 \lesssim  \sum_{j=1}^N \int_{A_j^N}  |\nabla v^N_{j}|^2.
$$
\end{proof}

We may now state the result on the control of the second error term $R_2^N:$
\begin{prop}\label{prop42}
There holds
$$
|R2^N| \lesssim \frac{E_0\|w\|_{2,p'}}{N^{1/3}}.
$$
\end{prop}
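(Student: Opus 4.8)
The plan is to bring $R2^N$ into a form where the preceding lemma applies directly, and then to close the estimate using Lemma \ref{lemma4} and the uniform energy bound of Proposition \ref{prop_unifbound}. Recall that
$$
R2^N = \sum_{j=1}^N \int_{\partial_e \Omega_j^N} [(\nabla w_j^N - \pi_j^N \mathbb{I}) \cdot n] \cdot (u^N - \bar{u}_j^N)\, {\rm d}\sigma,
$$
where $\bar{u}_j^N = \oint_{A_j^N} u^N$. The first step is to verify that the vector-fields $v_j^N := u^N - \bar{u}_j^N$, viewed on the annulus $A_j^N$, satisfy the three hypotheses of the lemma just proved. Indeed $v_j^N$ is divergence-free because $u^N$ is; its mean over $A_j^N$ vanishes by the very definition of $\bar{u}_j^N$; and its flux through $\partial_e A_j^N$ vanishes since $u^N$ is divergence-free and $\bar{u}_j^N$ is constant (so $\int_{\partial_e A_j^N} \bar{u}_j^N \cdot n\,{\rm d}\sigma = 0$ by the divergence theorem on the ball, while $\int_{\partial_e A_j^N} u^N \cdot n = 0$ likewise). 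This lets me rewrite $R2^N$ as a sum over $\partial_e A_j^N$ of exactly the shape handled by the lemma.

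Applying the lemma then yields
$$
|R2^N| \lesssim \left( \sum_{j=1}^N \|\nabla v_j^N\|_{L^2(A_j^N)}^2 \right)^{1/2} \left( \sum_{j=1}^N \|\nabla w_j^N\|_{L^2(A_j^N)}^2 \right)^{1/2}.
$$
Since $\nabla v_j^N = \nabla u^N$ on $A_j^N$ and the annuli $A_j^N$ are pairwise disjoint subsets of $\mathcal{F}^N$, the first factor is bounded by $\|\nabla E_\Omega[u^N]\|_{L^2(\Omega)} \lesssim E_0$ via Proposition \ref{prop_unifbound}. The second factor is where the rate is generated: I rescale each $w_j^N$ to the fixed-profile $W_j^N$ of \eqref{defW}, apply Lemma \ref{lemma4} (which controls the gradient energy of such a Stokes flow on the outer half-annulus by $(r_j^N/R)|V|^2$ with here $V = w(x_j^N)$ and $R \sim C_0/N^{1/3}$), and use that $A_j^N$ is precisely the outer portion $A(x_j^N, C_0/4N^{1/3}, C_0/2N^{1/3})$ of $\Omega_j^N$. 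After undoing the scaling this gives $\|\nabla w_j^N\|_{L^2(A_j^N)}^2 \lesssim N^{-1/3} |w(x_j^N)|^2$ uniformly in $j$, using \eqref{hyp2}. Summing over the $N$ annuli and invoking $\|w\|_\infty \lesssim \|w\|_{2,p'}$ (from $W^{2,p'}(\Omega) \hookrightarrow \mathcal{C}^{0,1}(\bar{\Omega})$) produces a second factor of order $(N \cdot N^{-1/3})^{1/2} \|w\|_{2,p'} = N^{1/3}\|w\|_{2,p'}$.

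Combining the two factors gives $|R2^N| \lesssim E_0 \cdot N^{1/3} \|w\|_{2,p'}$, which is off by a factor of $N^{2/3}$ from the claim, so the bookkeeping of the scaling exponents is the step I expect to be delicate. The resolution must come from a sharper accounting: the correct outcome of Lemma \ref{lemma4} after scaling is $\|\nabla w_j^N\|_{L^2(A_j^N)}^2 \lesssim N^{-1} \cdot N^{-1/3}|w(x_j^N)|^2$ (the extra $N^{-1}$ arising because the outer radius in the rescaled problem \eqref{eq_stokesunboundedproof} is $C_0/N^{2/3}$, not $O(1)$, so $R \sim N^{2/3}$ in the notation of Lemma \ref{lemma4} while $r_j^N = O(1)$), making the second factor of order $(N \cdot N^{-4/3})^{1/2}\|w\|_{2,p'} = N^{-1/6}\|w\|_{2,p'}$ — still not matching. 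The honest plan is therefore to carry out the scaling in \eqref{defW} with explicit care, track the Jacobian factors from both the change of variables on $A_j^N$ and the normalization of $W_j^N$, and pin down the precise power of $N$ that Lemma \ref{lemma4} contributes in these rescaled variables; the main obstacle is getting exactly this exponent bookkeeping right so that the two factors multiply to $E_0 \, N^{-1/3}\|w\|_{2,p'}$ as asserted.
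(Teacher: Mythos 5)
Your strategy is exactly the paper's: apply the preceding lemma to $v_j^N = \mathbf{1}_{A_j^N}\bigl[u^N - \oint_{A_j^N} u^N\bigr]$ (checking the three hypotheses as you do), bound the first factor by $\|\nabla u^N\|_{L^2(\mathcal F^N)} \lesssim E_0$ via Proposition \ref{prop_unifbound}, and control $\sum_j \|\nabla w_j^N\|^2_{L^2(A_j^N)}$ through Lemma \ref{lemma4}. But the proof is not complete: the one quantitative step that produces the rate --- the per-ball bound on $\|\nabla w_j^N\|^2_{L^2(A_j^N)}$ --- is wrong in both of your attempts ($N^{-1/3}$, then $N^{-4/3}$, per ball), and you end by conceding the exponents do not match. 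That concession is a genuine gap. The sources of the miscount are concrete: first, the hole $B_j^N$ has radius $r_j^N/N$, not $r_j^N$, so in physical variables the inner radius carries a factor $N^{-1}$; second, Lemma \ref{lemma4} gives a bound $\frac{r^2}{R}|V|^2$, quadratic in $r$, whereas you invoked it as $(r_j^N/R)|V|^2$; third, in your rescaled attempt you evaluated $r^2/R$ with $r = r_j^N = O(1)$, $R \sim C_0 N^{2/3}$ as $N^{-1/3}$ instead of $N^{-2/3}$.

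The correct bookkeeping closes the argument at once. Since Lemma \ref{lemma4} holds for arbitrary $r<R$ (it is scale-invariant), apply it directly in physical variables: $w_j^N$ solves the Stokes problem on $\Omega_j^N = A(x_j^N, r_j^N/N, C_0/2N^{1/3})$ with constant data $w(x_j^N)$ inside and $0$ outside, and $A_j^N$ is precisely the outer half-annulus $A(x_j^N,R/2,R)$ with $R = C_0/2N^{1/3}$. Hence
$$
\|\nabla w_j^N\|^2_{L^2(A_j^N)} \lesssim \frac{(r_j^N/N)^2}{C_0/2N^{1/3}}\,|w(x_j^N)|^2 \lesssim \frac{R_0^2}{C_0}\,\frac{|w(x_j^N)|^2}{N^{5/3}},
$$
so that $\sum_{j=1}^N \|\nabla w_j^N\|^2_{L^2(A_j^N)} \lesssim N^{-2/3}\|w\|_\infty^2$, whose square root is $N^{-1/3}\|w\|_\infty \lesssim N^{-1/3}\|w\|_{2,p'}$; multiplying by $E_0$ gives the claim. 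Equivalently, along the paper's rescaled route: the change of variables $y=N(x-x_j^N)$ contributes $N^2\cdot N^{-3}=N^{-1}$, and Lemma \ref{lemma4} applied to $W_j^N$ on $A(0,C_0N^{2/3}/4, C_0N^{2/3}/2)$ with $r=r_j^N=O(1)$, $R=C_0N^{2/3}/2$ gives $r^2/R \sim N^{-2/3}$ (note the rescaled outer radius is $C_0N^{2/3}/2$; the expression $C_0/N^{2/3}$ appearing in \eqref{eq_stokesunboundedproof} is a typo), so the product is again $N^{-1}\cdot N^{-2/3}=N^{-5/3}$ per ball.
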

 
\begin{proof}
The main idea to compute $R2^N$ is to apply the previous lemma to
$$
v^N_{j} = \mathbf{1}_{A_j^n} \left[ u^N - \oint_{A(x^N_j,C_0/4N^{1/4},C_0/2N^{1/3})} u^N \right].
$$
This entails that 
\begin{equation}\label{R2_fin}
|R2^N| \leq  K   \|\nabla u^N\|_{L^2(\mathcal F^N)} \left(\sum_{i=1}^N \|\nabla w^N_{j}\|^2_{L^2(A_j^N)}\right)^{\frac 12}.
\end{equation}
At this point, we recall the definition of $W_j^N$ (see \eqref{defW}) and use the change of variable $y=N(x-x_j^N)$:
\begin{align*}
\| \nabla w_j^N \|_{L^2(A_j^N)}^2 & = \int_{A_j^N} N^2 | \nabla W_j^N(N(x-x_j^N))|^2 dx \\ 
& =  \frac{1}{N} \int_{A(0,\frac{C_0N^{2/3}}{4},\frac{C_0N^{2/3}}{2}))} | \nabla W_j^N(y)|^2 dy \\ 
& =  \frac{1}{N}\| \nabla W_j^N \|_{L^2(A(0,\frac{C_0N^{2/3}}{4},\frac{C_0N^{2/3}}{2}))}^2.
\end{align*}

\medskip

We may then apply Lemma \ref{lemma4} to get that:
$$\| \nabla W^N_j\|_{L^2(A(0,\frac{C_0N^{2/3}}{4},\frac{C_0N^{2/3}}{2}))}^2 \lesssim |{r_j^N}|^2 \frac{|w(x_j^N)|^2}{C_0N^{2/3}}.$$
Plugging these identities into \eqref{R2_fin}, applying the fact that $E_\Omega(u^N)$ is bounded for the ${D}_0(\Omega)$-norm and assumption \eqref{hyp2}, we obtain: 
$$ |R2^N| \lesssim E_0 \left ( \frac{1}{N} \underset{j=1}{\overset{N}{\sum}}|{r_j^N}|^2 \frac{|w(x_j^N)|^2}{C_0N^{2/3}} \right )^{1/2} \lesssim \frac{E_0}{N^{1/3}}\|w\|_{\infty}.$$
\end{proof}
\subsection{Proof of Theorem \ref{theorem1}}
We now turn to the proof of the theorem including a smallness assumption on the size of the holes.
For this proof, we first complement the computations in the previous section by estimating the term
on the second line of \eqref{egalite_finale}:
\begin{prop}\label{prop3}
Under the further assumption that $j\in L^q(\Omega)$ for some $q >3,$ there exists $K_{p,\Omega}$ depending only on $p$ and $\Omega$ such that:
\begin{multline*}
\Big | \underset{k=1}{\overset{N}{\sum}}F_k^N \cdot \bar{u}_k^N -  \int_\Omega \rho(x) w(x) \cdot \bar{u}(x) \Big | - K_{p,\Omega} \dfrac{R_0}{C_0^3} \|w\|_{2,p'} \|u^N-\bar{u}\|_{p}  \\ \lesssim  \left[ \dfrac{E_0}{N^{2/3}} + \Big (  \| \rho^N - \rho \|_{(\mathcal{C}^{0,1}(\bar{\Omega}))^*}  + \frac{1}{N^{1/3}} 
 \Big ) \|\bar{u}\|_{2,3}\right] \|w\|_{2,p'}
\end{multline*}
\end{prop}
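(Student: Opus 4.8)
The plan is to mirror the treatment of $\sum_k F_k^N\cdot V_k^N$ carried out in Proposition~\ref{prop2}, now with the annular means $\bar u_k^N$ in place of the boundary velocities, and to peel off the three obstructions separating the discrete sum from the continuous integral. Concretely, I would start from the decomposition
\begin{align*}
\sum_{k=1}^N F_k^N\cdot\bar u_k^N - \int_\Omega \rho\, w\cdot\bar u
&= \sum_{k=1}^N\Bigl(F_k^N - \tfrac{6\pi r_k^N}{N}w(x_k^N)\Bigr)\cdot\bar u_k^N \\
&\quad + \frac{6\pi}{N}\sum_{k=1}^N r_k^N\, w(x_k^N)\cdot\bigl(\bar u_k^N-\bar u(x_k^N)\bigr) \\
&\quad + \Bigl(\langle \rho^N,w\cdot\bar u\rangle-\langle\rho,w\cdot\bar u\rangle\Bigr),
\end{align*}
where the last bracket uses that $\tfrac{6\pi}{N}\sum_k r_k^N (w\cdot\bar u)(x_k^N)$ is exactly the pairing of $w\cdot\bar u$ against the empirical density $\rho^N$, and then I would split the middle term once more by inserting the averages $\oint_{A_k^N}\bar u$.

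For the first term I would reproduce the rescaling $(w_k^N,\pi_k^N)\mapsto(W_k^N,\Pi_k^N)$ of \eqref{defW} and apply Lemma~\ref{lemma1}, which gives $|F_k^N-\tfrac{6\pi r_k^N}{N}w(x_k^N)|\lesssim (r_k^N)^2|w(x_k^N)|/N^{5/3}$; combined with \eqref{hyp2}, a Cauchy--Schwarz in $k$, and the bound $\tfrac1N\sum_k|\bar u_k^N|^2\le \tfrac1N\sum_k\oint_{A_k^N}|u^N|^2\lesssim \|u^N\|_2^2\lesssim E_0^2$ (Jensen on each annulus of volume $\sim C_0^3/N$ together with Proposition~\ref{prop_unifbound}), this contributes $\lesssim E_0\|w\|_\infty/N^{2/3}$. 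The last bracket is handled by the bounded-Lipschitz duality, $|\langle\rho^N-\rho,w\cdot\bar u\rangle|\le\|\rho^N-\rho\|_{(\mathcal C^{0,1}(\bar\Omega))^*}\|w\cdot\bar u\|_{\mathcal C^{0,1}(\bar\Omega)}$, using $\|w\cdot\bar u\|_{\mathcal C^{0,1}}\lesssim\|w\|_{\mathcal C^{0,1}}\|\bar u\|_{\mathcal C^{0,1}}$. Likewise, the part $\oint_{A_k^N}(\bar u-\bar u(x_k^N))$ of the middle term is controlled by $\mathrm{diam}(A_k^N)\sim N^{-1/3}$, yielding $\lesssim N^{-1/3}\|w\|_\infty\|\bar u\|_{\mathcal C^{0,1}}$. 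Both of these rely on $\bar u$ being Lipschitz, which is precisely where the hypothesis $q>3$ enters: Theorem~\ref{reg_brinkman} gives $\bar u\in W^{2,q}(\Omega)$, and Morrey's embedding then puts $\bar u$ in $\mathcal C^{0,1}(\bar\Omega)$; finishing with $W^{2,p'}(\Omega)\hookrightarrow\mathcal C^{0,1}(\bar\Omega)$ for $w$ produces the stated $\|w\|_{2,p'}$ factors.

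The crux is the remaining piece $\tfrac{6\pi}{N}\sum_k r_k^N w(x_k^N)\oint_{A_k^N}(u^N-\bar u)$. Since $A_k^N\subset\mathcal F^N$, on these annuli $E_\Omega[u^N]=u^N$, so this equals $\tfrac{6\pi}{N}\sum_k r_k^N w(x_k^N)\oint_{A_k^N}v^N$ with $v^N=E_\Omega[u^N]-\bar u$. The decisive geometric fact is that $|A_k^N|\sim C_0^3/N$, so the prefactor $\tfrac1N\cdot|A_k^N|^{-1}$ is of order $C_0^{-3}$ and the $N$'s cancel: bounding $|\oint_{A_k^N}v^N|\le |A_k^N|^{-1}\int_{A_k^N}|v^N|$, using $r_k^N\le R_0$, $|w(x_k^N)|\le\|w\|_\infty$, and the disjointness of the annuli, I obtain $\lesssim \tfrac{R_0}{C_0^3}\|w\|_\infty\int_\Omega|v^N|\le \tfrac{R_0}{C_0^3}\|w\|_\infty|\Omega|^{1/p'}\|v^N\|_p$, which is exactly $K_{p,\Omega}\tfrac{R_0}{C_0^3}\|w\|_{2,p'}\|u^N-\bar u\|_p$.

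The main obstacle is this last contribution, because it carries the very quantity $\|u^N-\bar u\|_p$ we are trying to estimate. It is harmless only because its prefactor is $R_0/C_0^3$, which Theorem~\ref{theorem1} assumes small, so that after taking the supremum over $w$ and invoking Lemma~\ref{lemma_principal} it can be absorbed into the left-hand side; this is why obtaining the \emph{clean} constant $R_0/C_0^3$, and not a looser geometric bound, is essential. A secondary point to flag is that the linearity of the whole estimate rests on $\bar u$ being Lipschitz, i.e.\ on $q>3$; without it (the setting of Theorem~\ref{theorem2}) only the modulus of continuity of $\bar u$ is available and the control degrades to a sublinear one.
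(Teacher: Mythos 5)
Your proposal is correct and follows essentially the same route as the paper: the same three-way splitting (force error via Lemma \ref{lemma1} and Stokes' law, the empirical-density term via bounded-Lipschitz duality and the Lipschitz regularity of $\bar{u}$ from Theorem \ref{reg_brinkman} with $q>3$, and the absorption term with the clean $R_0/C_0^3$ constant coming from $|A_k^N|\sim C_0^3/N$). The only cosmetic differences are that the paper packages your terms through the auxiliary functionals $\Pi_N$ and $\Pi$ and bounds the force-error sum by an $L^1$ argument rather than Cauchy--Schwarz, neither of which changes the substance.
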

\begin{proof}
We may write:
$$\underset{k=1}{\overset{N}{\sum}}F_k^N \cdot \bar{u}_k^N = \underset{k=1}{\overset{N}{\sum}} \Big(F_k^N -\frac{6\pi}{N} r_k^N w(x_k^N)\Big) \cdot \bar{u}_k^N + \frac{6 \pi}{N} r_k^N w(x_k^N) \cdot \bar{u}_k^N$$
We remind that given $k \in \{1,\dots,N\}$:
$$ |A_k^N|=|B(x_k^N,C_0/2N^{1/3})|-|B(x_k^N,C_0/4N^{1/3})|=\frac{4}{3} \pi \left ( \frac{C_0^3}{8N}-\frac{C_0^3}{64N} \right ) = \frac{7 C_0^3 \pi}{48 N}. $$ 
According to the same computations as in the proof of Proposition \ref{prop2}:
\begin{align*}
\left | \underset{k=1}{\overset{N}{\sum}} \Big(F_k^N -\frac{6\pi}{N} r_k^Nw(x_k^N)\Big) \cdot \bar{u}_k^N \right | & \lesssim \frac{1}{N^{2/3}}
 \underset{k=1}{\overset{N}{\sum}}\frac{1}{N}|\bar{u}_k^N| \|w\|_{\infty} \\
& \lesssim  \frac{1}{N^{2/3}} \|w\|_{\infty}  \frac{1}{N} \underset{k=1}{\overset{N}{\sum}} \frac{1}{|A_k^N|}\int_{A_k^N} |u^N|\\ 
& \lesssim \frac{E_0}{N^{2/3}} \|w\|_{\infty}.
\end{align*}
In order to compute the remaining term we introduce the linear mapping:

\begin{displaymath}
\Pi_N:
\left\{
  \begin{array}{rcl}
   \mathcal{C}^\infty_c(\bar{\Omega}) & \longrightarrow &\mathbb{R} \\
    \phi & \longmapsto &\displaystyle \langle \Pi_N, \phi \rangle:= \frac{6 \pi }{N}\underset{k=1}{\overset{N}{\sum}} r_k^N  w(x_k^N) \cdot \oint_{A_k^N }\phi. \\
  \end{array}
\right.
\end{displaymath}
We also set:
$$ \langle \Pi, \phi \rangle:= \int_{\Omega} \rho(x)w(x) \cdot \phi(x){\rm d}x,$$
to rewrite the term: 
\begin{align*} \frac{1}{N} \underset{k=1}{\overset{N}{\sum}} 6 \pi r_k^N  w(x_k^N) \cdot \bar{u}_k^N - \int_{\Omega} \rho(x) w(x) \cdot \bar{u}(x) {\rm d}x & =  \langle \Pi_N,u^N \rangle - \langle \Pi,\bar{u} \rangle \\
& =  \langle \Pi_N,u^N-\bar{u} \rangle + \langle \Pi_N-\Pi,\bar{u} \rangle.
\end{align*}

By straightforward computations, we show that $(\Pi_N)_N$ is a bounded family of linear mappings on $L^p(\Omega)$.
Indeed, recalling the definition of  $R_0$ in \eqref{hyp2} and the above computation of $|A_k^N|,$ we obtain: 
\begin{align*} | \langle \Pi_N, \phi \rangle | 
& \leq  \|w\|_{\infty} \frac{6\pi}{N|A_1^N|} \underset{k}{\max}r_k^N  \int_{ \underset{k}{\bigsqcup} A_k^N} |\phi| \\
& \leq  K_{p,\Omega}  \dfrac{R_0}{C_0^3} \|w\|_{\infty} \|\phi\|_{p},
\end{align*}
with $K_{p,\Omega}$ depending only on $\Omega$ and $p.$ 
Hence, applying the embedding $W^{2,p'}(\Omega) \subset L^{\infty}(\Omega)$ (with a constant depending only on $p,\Omega$) we obtain with a possibly different constant $K_{p,\Omega},$ keeping the same dependencies:
$$
 | \langle \Pi_N,u^N-\bar{u} \rangle|  \leq K_{p,\Omega} \dfrac{R_0}{C_0^3}\|w\|_{2,p'}\|u^N-\bar{u}\|_{p}.
$$
To compute the last term we use the regularity of $\bar{u}$ solution to the Brinkman problem. Indeed, if $j\in L^q$ for some $q>3$ then Theorem \ref{reg_brinkman} shows that
$\bar{u} \in  W^{2,q}(\Omega) \hookrightarrow \mathcal{C}^{0,1}(\bar{\Omega})$, thus, there holds: 
\begin{align*}
| \langle \Pi_N - \Pi, \bar{u} \rangle | & = 
\Big | \frac{6\pi}{N} \underset{k=1}{\overset{N}{\sum}} r_k^N  w(x_k^N)\cdot \bar{u}(x_k^N) - \int_{\Omega} \rho(x) w(x) \cdot \bar{u}(x){\rm d}x \\
& +  \frac{6\pi}{N}\underset{k=1}{\overset{N}{\sum}} r_k^Nw(x_k^N)\cdot \oint_{A_k^N} (\bar{u}-\bar{u}(x_k^N))  \Big | \\ \\
& \lesssim  | \langle \rho^N - \rho  ,  w \cdot \bar{u} \rangle | \\
& +   \frac{6\pi}{N}\underset{k=1}{\overset{N}{\sum}} r_k^N  |w(x_k^N)|  \frac{C_0}{2N^{1/3}} \| \bar{u}\|_{\mathcal{C}^{0,1}(\bar{\Omega})} \\ \\ 
& \lesssim  \| \rho^N - \rho \|_{(\mathcal{C}^{0,1}(\bar{\Omega}))^*} \|w\|_{\mathcal{C}^{0,1}(\bar{\Omega})} \|\bar{u}\|_{\mathcal{C}^{0,1}(\bar{\Omega})} \\ 
& + \frac{1}{N^{1/3}} \|w\|_{\infty}\|\bar{u}\|_{\mathcal{C}^{0,1}(\bar{\Omega})}.
\end{align*}

\end{proof}
\begin{rem}\label{rem_prop4}
When $j\in L^q(\Omega)$ with $q\in ]3/2,3[$, a similar estimate holds involving the distance between $\rho$ and $\rho^N$ in the dual of $\mathcal{C}^{0,\alpha_q}(\bar{\Omega})$. This restriction is due to the embedding  $\bar{u}\in W^{2,2}(\Omega) \hookrightarrow \mathcal{C}^{0,\alpha_q}(\bar{\Omega})$. The case $q\in ]3/2,3[$ involves also  a remainder term that converges to zero like $\frac 1{N^{\alpha_q/3}}$.\\
\end{rem}

To complete the proof of Theorem \ref{theorem1}, we remind that we introduced an exponent $p \in ]1,3/2[,$ and an arbitrary divergence-free test-function $w \in W^{1,p'}_0(\Omega) \cap W^{2,p'}(\Omega);$ inspired by Lemma \ref{lemma_principal}, we computed  \eqref{egalite_finale} which we recall here: 
\begin{multline} \label{eq_egalitefinale2}
\int_\Omega \nabla (E_{\Omega}[u^N] - \bar{u}):\nabla w =  \left(\underset{j=1}{\overset{N}{\sum}}  F_{j}^{N} \cdot v^N_j -  \int_\Omega j(x) \cdot w(x) dx \right)\\
  +\left(\int_\Omega \rho(x) \bar{u}(x)\cdot w(x)dx - \sum_{j=1}^N F_j^N \cdot \bar{u}^N_{j}\right)
 + R1^{N} + R2^{N}.
\end{multline}

At this point, we apply now propositions \ref{prop2}, \ref{prop4}, \ref{prop42} and \ref{prop3}. This entails that there exists a  constant $K$ depending only on $p,\Omega,R_0,C_0$
for which 
\begin{multline*}
\left| \int_\Omega \nabla (E_{\Omega}[u^N] - \bar{u}):\nabla w\right|
  \leq\|w\|_{2,p'} \\ \Bigg( K   \left[ \|j^N-j\|_{(\mathcal{C}^{0,1}(\bar{\Omega}))^*}  + \frac{E_0+\|\bar{u}\|_{2,q}}{N^{1/3}} + \| \rho^N -\rho \|_{(\mathcal{C}^{0,1}(\Omega))^* } \right]  
+  K_{p,\Omega} \dfrac{R_0}{C_0^3}\|u^N-\bar{u}\|_{p}  \Bigg).
\end{multline*}
Consequently, applying Lemma \ref{lemma_principal} and regularity theory for Stokes-Brinkman problem, we obtain a constant $K_{p,\Omega}$ which may  differ from the previous ones, but still depending only on $p$ and $\Omega,$
such that:
$$
\left(1-K_{p,\Omega} \dfrac{R_0}{C_0^3} \right)\|u^N-\bar{u}\|_{p} \lesssim  \|j^N-j\|_{(\mathcal{C}^{0,1}(\bar{\Omega}))^*}+\| \rho^N -\rho \|_{(\mathcal{C}^{0,1}(\Omega))^* }+\frac{E_0+\|j\|_{q}}{N^{1/3}}.
$$
This yields the expected result assuming that $R_0/C_0^3$ is sufficiently small.

\subsection{Proof of Theorem \ref{theorem2}}
We proceed with the proof of our second main result. We do not consider in this case any particular restriction on the ratio $R_0/C_0^3.$  We want to apply now Lemma \ref{lemma_secondaire}.
So, we remind that for a fixed divergence-free test-function $w \in W^{1,p'}_0(\Omega) \cap W^{2,p'}(\Omega),$ by using again formula \eqref{egalite_finale}, we get:
\begin{multline} \label{eq_brinkmanfinale}
\int_\Omega \nabla [E_{\Omega}[u^N] - \bar{u}]: \nabla w +\int_\Omega \rho [E_{\Omega}[u^N] - \bar{u}]\cdot w  = 
\left(\underset{j=1}{\overset{N}{\sum}}  F_{j}^{N} \cdot v^N_j -  \int_\Omega j\cdot w\right) \\ 
 +\left( \int_\Omega \rho  E_{\Omega}[u^N] \cdot w - \underset{j=1}{\overset{N}{\sum}}  F_{j}^{N} \cdot \bar{u}^N_j\right) 
 +  R1^{N} + R2^{N} .
\end{multline}
In order to treat the new term 
$$ \int_\Omega \rho E_{\Omega}[u^N] \cdot w - \underset{j=1}{\overset{N}{\sum}}  F_{j}^{N} \cdot \bar{u}^N_j,
$$ 
we apply the following proposition:
\begin{prop}\label{prop5}
There holds:
$$
\left |  \int_\Omega \rho  E_{\Omega}[u^N] \cdot w - \underset{j=1}{\overset{N}{\sum}}  F_{j}^{N} \cdot \bar{u}^N_j \right | \lesssim \left ( \|\rho-\rho^N\|_{(\mathcal{C}^{0,1}(\bar{\Omega}))^*}+\frac{1}{N^{1/3}} \right )^{1/3}E_0 \|w\|_{W^{2,p'}(\Omega)}.
$$
\end{prop}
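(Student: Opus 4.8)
The plan is to follow the proof of Proposition \ref{prop3} almost verbatim, up to one decisive change: the comparison field $\bar u$ is replaced by $E_\Omega[u^N]$, which is only of class $H^1$. As in Proposition \ref{prop3}, I would first split off the Stokes-law defect,
$$
\sum_{j=1}^N F_j^N\cdot\bar u_j^N=\sum_{j=1}^N\Big(F_j^N-\tfrac{6\pi}{N}r_j^N w(x_j^N)\Big)\cdot\bar u_j^N+\frac{6\pi}{N}\sum_{j=1}^N r_j^N w(x_j^N)\cdot\bar u_j^N,
$$
and bound the first sum by $E_0 N^{-2/3}\|w\|_\infty$ exactly as before, applying Lemma \ref{lemma1} to the rescaled fields $(W_j^N,\Pi_j^N)$ from \eqref{defW} together with the bound $\tfrac1N\sum_j\oint_{A_j^N}|u^N|\lesssim E_0$ coming from Proposition \ref{prop_unifbound}. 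Since $\bar u_j^N=\oint_{A_j^N}u^N$ and $\int_\Omega\rho\,E_\Omega[u^N]\cdot w=\int_\Omega\rho\,w\cdot u^N$, this reduces the statement to estimating
$$
D:=\frac{6\pi}{N}\sum_{j=1}^N r_j^N\,w(x_j^N)\cdot\oint_{A_j^N}u^N-\int_\Omega\rho\,w\cdot E_\Omega[u^N].
$$

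I would then read $D$ as the action of the signed measure $\rho^N-\rho$ on the scalar field $h:=w\cdot E_\Omega[u^N]$. Replacing $w(x_j^N)\cdot\oint_{A_j^N}u^N$ by $\oint_{A_j^N}(w\cdot u^N)$ costs only $\oint_{A_j^N}(w(x_j^N)-w)\cdot u^N$, which, since $w\in W^{2,p'}\hookrightarrow\mathcal C^{0,1}$ and $\mathrm{diam}(A_j^N)\lesssim N^{-1/3}$, is controlled after summation against the weights $\tfrac{6\pi}{N}r_j^N$—using the disjointness of the $A_j^N$ and $|A_j^N|\sim C_0^3/N$ from Proposition \ref{prop3}—by $E_0 N^{-1/3}\|w\|_{2,p'}$; this feeds the $N^{-1/3}$ term. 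One notes that $h\in H^1(\Omega)$ with $\|h\|_{H^1}\lesssim E_0\|w\|_{2,p'}$, because $\nabla h=(\nabla w)\,u^N+w\,\nabla u^N$ lies in $L^2$. It thus remains to estimate $\langle\rho^N-\rho,h\rangle$, with the discrete part interpreted through the annular averages $\oint_{A_j^N}$.

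The crux—and the sole genuine departure from Proposition \ref{prop3}—is that $h$ is merely $H^1$, hence not Hölder continuous in dimension three, so the bounded-Lipschitz norm cannot be paired with $h$ directly (it is precisely the extra regularity $\bar u\in W^{2,q}\hookrightarrow\mathcal C^{0,1}$ that gave the linear rate in Theorem \ref{theorem1}). I would regularize at a scale $\delta$: set $h_\delta=h\ast\eta_\delta$ and write $\langle\rho^N-\rho,h\rangle=\langle\rho^N-\rho,h_\delta\rangle+\langle\rho^N-\rho,h-h_\delta\rangle$. For the smooth piece, $|\langle\rho^N-\rho,h_\delta\rangle|\lesssim\|\rho^N-\rho\|_{(\mathcal C^{0,1}(\bar\Omega))^*}\,\|h_\delta\|_{\mathcal C^{0,1}(\bar\Omega)}$ with $\|h_\delta\|_{\mathcal C^{0,1}(\bar\Omega)}\lesssim\delta^{-a}\|h\|_{H^1}$ for a suitable exponent $a$ issuing from the mollifier estimates, plus a $\delta^{-a}N^{-1/3}\|h\|_{H^1}$ correction when passing from the annular averages to point values $h_\delta(x_j^N)$. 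For the remainder, both the absolutely continuous part $\int_\Omega\rho\,|h-h_\delta|\lesssim\|\rho\|_\infty\|h-h_\delta\|_{L^1}$ and the discrete part—again by disjointness of the annuli and $|A_j^N|\sim C_0^3/N$—are bounded by $\|h-h_\delta\|_{L^1}\lesssim\delta\,\|\nabla h\|_{L^1}\lesssim\delta\,\|h\|_{H^1}$. Optimizing
$$
\big(\delta+(\|\rho^N-\rho\|_{(\mathcal C^{0,1}(\bar\Omega))^*}+N^{-1/3})\,\delta^{-a}\big)\|h\|_{H^1}
$$
over $\delta$ (with $\delta\gtrsim N^{-1/3}$, so that the annular-averaging corrections stay subordinate) produces the announced sublinear rate. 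The main obstacle is exactly this balancing: one must select the regularity norms of $h$ entering the mollifier bounds so that the two competing powers of $\delta$ combine, under the book-keeping of the paper, into the cube-root exponent $\left(\|\rho-\rho^N\|_{(\mathcal C^{0,1}(\bar\Omega))^*}+N^{-1/3}\right)^{1/3}$ appearing in the statement.
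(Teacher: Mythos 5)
Your proposal is correct and is essentially the paper's own proof: the paper performs the identical splitting into the Stokes-law defect (estimated as in Proposition \ref{prop2}) plus the functional difference $\langle \Pi - \Pi_N, E_\Omega[u^N]\rangle$, and then interpolates via mollification between a bounded-Lipschitz pairing bound that blows up in $\delta$ (obtained by mollifying $E_\Omega[u^N]$ and using $\|E_\Omega[u^N]\ast\chi_\delta\|_{H^3} \lesssim \delta^{-2}\|\nabla E_\Omega[u^N]\|_2$ together with $H^3(\Omega)\hookrightarrow \mathcal{C}^{0,1}(\bar\Omega)$) and a Lebesgue-norm remainder bound decaying like $\delta$, finally choosing $\delta = \big(\|\rho-\rho^N\|_{(\mathcal{C}^{0,1}(\bar\Omega))^*} + N^{-1/3}\big)^{1/3}$. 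Your minor deviations---mollifying the scalar $h = w\cdot E_\Omega[u^N]$ instead of the field $E_\Omega[u^N]$, using an $L^1$ rather than $L^2$ remainder estimate, and leaving the blow-up exponent $a$ unspecified---are immaterial, since the paper's choice corresponds to $a=2$ (and the direct estimate $\|\nabla h\ast\eta_\delta\|_\infty \lesssim \delta^{-3/2}\|\nabla h\|_2$ even gives $a=3/2$, hence the better exponent $2/5$), so the optimization yields an exponent $1/(1+a)\geq 1/3$, which implies the stated cube-root rate because the quantity in parentheses is bounded.
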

\begin{proof}
Using the same notations $\Pi_N$ and $\Pi$ as in the previous proof , we write:
$$
 \int_\Omega \rho  E_{\Omega}[u^N] \cdot w - \underset{j=1}{\overset{N}{\sum}}  F_{j}^{N} \cdot \bar{u}^N_j =
\underset{k=1}{\overset{N}{\sum}} \left(\frac{6\pi}{N} r_k^N w(x_k^N)-F_k^N\right) \cdot \bar{u}_k^N 
+\langle \Pi - \Pi_N,E_{\Omega}[u^N] \rangle,
$$
where the first quantity on the right-hand side is treated as in Proposition \ref{prop2}:
$$\left | \underset{k=1}{\overset{N}{\sum}} (F_k^N -\frac{6\pi}{N} r_k^Nw(x_k^N)) \cdot \bar{u}_k^N \right | \lesssim \frac{E_0}{N^{2/3}} \|w\|_{\infty}.$$
To compute the second term $| \langle \Pi_N-\Pi,u^N \rangle |$, we remark that for arbitrary smooth test function $\phi$ there holds:
\begin{align*}
\langle \Pi_N-\Pi, \phi \rangle  & = \frac{6\pi}{N} \underset{j=1}{\overset{N}{\sum}} r_j^N w(x_j^N) \cdot \oint_{A_j^N} \phi(x) dx -  \langle \rho, \phi \cdot  w \rangle  \\
&= \langle \rho^N-\rho, \phi \cdot w \rangle + \frac{6\pi}{N} \underset{j=1}{\overset{N}{\sum}} r_j^N w(x_j^N) \oint_{A_j^N} (\phi(x)-\phi(x_j^N))dx, 
\end{align*} 
and consequently,
\begin{align*}
|\langle \Pi_N-\Pi, \phi \rangle| & \lesssim |\langle \rho^N-\rho, \phi \cdot w \rangle |+\dfrac{ 1} {N^{1/3}}  \|w\|_{\infty}   \|\phi\|_{\mathcal{C}^{0,1}(\bar{\Omega})} \\
& \lesssim \left ( \|\rho-\rho^N\|_{(\mathcal{C}^{0,1}(\bar{\Omega}))^*} + \frac{1}{N^{1/3}} \right )  \|\phi\|_{\mathcal{C}^{0,1}(\bar{\Omega})}  \|w\|_{\mathcal{C}^{0,1}(\bar{\Omega})}.
\end{align*}
On the other hand for all $\phi \in L^2(\Omega),$ we have that:
\begin{align*}
| \langle \Pi_N-\Pi, \phi \rangle | & =\Big | \frac{6\pi}{N}  \int_\Omega \underset{j=1}{\overset{N}{\sum}}  \frac{1_{A_j^N}(x)}{|A_j^N|}w(x_j^N) \cdot  \phi(x) dx - \int_{\Omega}\rho \phi \cdot w \Big |\\
& \lesssim  \|w\|_{\infty}  \underset{j=1}{\overset{N}{\sum}}\int_{A_j^N} |\phi| + \|w\|_{\infty} \|\rho\|_{2} \|\phi\|_{2}\\
& \lesssim \|w\|_{\infty}  \|\phi\|_{2}.
\end{align*}
We now propose to interpolate the results above as we want to apply the previous inequalities with $\phi = E_{\Omega}[u^N] \in H^1_0(\Omega)$. So, let $\chi$ be a mollifier having support in $B(0,1)$. We construct then the approximation of unity$$\chi_\delta(\cdot)=\frac{1}{\delta^3}\chi\left (\frac{\cdot}{\delta}\right ), \quad 
\forall \, \delta >0.$$

\medskip

Thanks to the previous computations, we have that:
\begin{align*}
 \left | \langle \Pi_N-\Pi, E_{\Omega}[u^N] \rangle \right | & \leq \left |\langle \Pi_N-\Pi, E_{\Omega}[u^N]*\chi_\delta \rangle \right | + \left | \langle \Pi_N-\Pi, E_{\Omega}[u^N]-E_{\Omega}[u^N]*\chi_\delta \rangle \right | \\
& \lesssim  \left ( \|\rho-\rho^N\|_{(\mathcal{C}^{0,1}(\bar{\Omega}))^*} + \frac{1}{N^{1/3}} \right )  \|E_{\Omega}[u^N]*\chi_\delta \|_{\mathcal{C}^{0,1}(\bar{\Omega})}  \|w\|_{\mathcal{C}^{0,1}(\bar{\Omega})} \\
& + \|w\|_{\infty}  \|E_{\Omega}[u^N]-E_{\Omega}[u^N]*\chi_\delta\|_{2}.
\end{align*}
At this point we remark that $E_{\Omega}[u^N]*\chi_\delta \in H^3(\Omega) \hookrightarrow \mathcal{C}^{0,1}(\bar{\Omega})$ with continuous embedding. Furthermore, straightforward computations show that:
\begin{eqnarray*} 
\|E_{\Omega}[u^N]-E_{\Omega}[u^N]*\chi_\delta \|_{L^2(\mathbb{R}^3)} &\lesssim& \delta  \|u\|_{H^1_0(\Omega)},
\\
\|E_{\Omega}[u^N]*\chi_\delta\|_{H^3(\mathbb{R}^3)} &\lesssim& \frac{1}{\delta^2} \|u\|_{H^1_0(\Omega)}.
\end{eqnarray*}
Plugging these estimates in the previous inequality yields that:
\begin{multline*}
|\langle \Pi_N-\Pi,E_{\Omega}[u^N] \rangle | \\
\leq \left ( \|\rho-\rho^N\|_{(\mathcal{C}^{0,1}(\bar{\Omega}))^*}+\frac{1}{N^{1/3}} \right ) \frac{1}{\delta^2} \|\nabla E_{\Omega}[u^N]\|_{2}\|w\|_{\mathcal{C}^{0,1}}+\delta \|w\|_{L^\infty} \|\nabla E_{\Omega}[u^N]\|_{L^2(\Omega)}.
\end{multline*}
We may then set $\delta= \left ( \|\rho-\rho^N\|_{(\mathcal{C}^{0,1}(\bar{\Omega}))^*}+\frac{1}{N^{1/3}}\right )^{1/3}$ and again apply that 
$W^{2,p'}(\Omega) \subset \mathcal C^{0,1}(\bar{\Omega})$ with the uniform control on $\|\nabla E_{\Omega}[u^N]\|_{2}$ to get that
$$
|\langle \Pi_N-\Pi,E_{\Omega}[u^N] \rangle | \lesssim \left ( \|\rho-\rho^N\|_{(\mathcal{C}^{0,1}(\bar{\Omega}))^*}+\frac{1}{N^{1/3}} \right )^{1/3} E_0 \|w\|_{W^{2,p'}(\Omega)}.
$$
\end{proof}
Similarly as in the proof of the previous theorem, we complete the proof of Theorem \ref{theorem2} by applying propositions  \ref{prop2}, \ref{prop4}, \ref{prop42} and \ref{prop5} to control the right-hand side of \eqref{eq_brinkmanfinale} and refering to Lemma \ref{lemma_secondaire} to conclude.

\section{Final remarks}
In the main theorems of this paper, we measure the distance $E_{\Omega}[u^N]-\bar{u}$ in $L^p$-spaces with respect to the distances between $(\rho^N,j^N)$ and $(\rho,j)$ in the bounded-Lipschitz norms. 
With the same method, we may prove similar estimates when considering some Zolotarev-like distances of the data:
$$
\|\rho^N - \rho\|_{\mathcal C^{0,\alpha}(\bar{\Omega})} + \|j^N - j\|_{\mathcal C^{0,\alpha}(\bar{\Omega})}, \quad  \alpha \in (0,1).
$$ 

Precisely, reproducing the computations of the paper and introducing the remarks \ref{rem_prop2},\ref{rem_R1^N} and \ref{rem_prop4}, we may prove:
\begin{thm}\label{theorem1_bis}
Let $\alpha \in (0,1)$ and $(p,q) \in (1,3/(1+\alpha)) \times (3/(2-\alpha),\infty).$  Assume that $j \in L^q(\Omega)$ and  $R_0 /C_0^3$ is sufficiently small, there exists a constant $K >0$ depending only on $R_0,C_0,p,q,\Omega$ for which:
$$
 \|E_\Omega[u^N]-\bar{u}\|_{L^p(\Omega)} \leq K\left[
\|j^N-j\|_{(\mathcal{C}^{0,\alpha}(\bar{\Omega}))^*}+\| \rho^N -\rho \|_{(\mathcal{C}^{0,\alpha}(\bar{\Omega}))^* } + \frac{E_0}{N^{\min(1/3,\alpha)}} + \frac{\|{j}\|_{L^q(\Omega)}}{N^{\alpha/3}} \right], 
$$
for $N \geq (4R_0/C_0)^{3/2}.$
\end{thm}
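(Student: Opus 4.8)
The plan is to reproduce the skeleton of the proof of Theorem \ref{theorem1} verbatim, replacing every Lipschitz norm $\mathcal{C}^{0,1}(\bar\Omega)$ by the Hölder norm $\mathcal{C}^{0,\alpha}(\bar\Omega)$ and invoking, at each step, the $\mathcal{C}^{0,\alpha}$-adapted estimates already announced in Remarks \ref{rem_prop2}, \ref{rem_R1^N} and \ref{rem_prop4}. Concretely I would fix $p\in(1,3/(1+\alpha))$ and, since $p<3/(1+\alpha)<3<6$, apply Lemma \ref{lemma_principal} to reduce the control of $\|E_\Omega[u^N]-\bar u\|_p$ to bounding $|\int_\Omega \nabla v^N:\nabla w|$ for an arbitrary divergence-free $w\in W^{2,p'}(\Omega)\cap W^{1,p'}_0(\Omega)$ with $\|w\|_{2,p'}=1$, then expand this pairing through the master identity \eqref{egalite_finale}.

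The conceptual heart is that the two exponent constraints are exactly the thresholds putting both $w$ and $\bar u$ into $\mathcal{C}^{0,\alpha}(\bar\Omega)$. Indeed $p<3/(1+\alpha)$ is equivalent to $\alpha_p:=3/p-1>\alpha$, whence $w\in W^{2,p'}(\Omega)\hookrightarrow \mathcal{C}^{0,\alpha_p}(\bar\Omega)\subset\mathcal{C}^{0,\alpha}(\bar\Omega)$; symmetrically $q>3/(2-\alpha)$ is equivalent to $\alpha_q:=2-3/q>\alpha$, and Theorem \ref{reg_brinkman} then yields $\bar u\in W^{2,q}(\Omega)\hookrightarrow\mathcal{C}^{0,\alpha_q}(\bar\Omega)\subset\mathcal{C}^{0,\alpha}(\bar\Omega)$ with $\|\bar u\|_{2,q}\lesssim\|j\|_q$. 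Since the products $w\cdot\bar u$ and all fields paired against $\rho^N-\rho$ and $j^N-j$ therefore lie in $\mathcal{C}^{0,\alpha}(\bar\Omega)$, the bounded-Lipschitz pairings in Propositions \ref{prop2} and \ref{prop3} may be replaced by $(\mathcal{C}^{0,\alpha}(\bar\Omega))^*$ pairings, the only cost being that the Lipschitz oscillation estimate $|x-x_k^N|\le C_0/2N^{1/3}$ for $\bar u$ on $A_k^N$ becomes its Hölder counterpart $(C_0/2N^{1/3})^{\alpha}$, i.e.\ a factor $1/N^{\alpha/3}$.

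I would then assemble the four contributions exactly as in the closing argument of Theorem \ref{theorem1}. The force-versus-flux term supplies $\|j^N-j\|_{(\mathcal{C}^{0,\alpha})^*}+E_0/N^{2/3}$; the remainder $R2^N$ is left untouched, as Proposition \ref{prop42} uses only $\|w\|_\infty\lesssim\|w\|_{2,p'}$, and still contributes $E_0/N^{1/3}$; the second line of \eqref{egalite_finale} yields $\|\rho^N-\rho\|_{(\mathcal{C}^{0,\alpha})^*}\|\bar u\|_{2,q}+\|\bar u\|_{2,q}/N^{\alpha/3}$ together with the critical product $K_{p,\Omega}(R_0/C_0^3)\|u^N-\bar u\|_p$. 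Because $\alpha_p>\alpha$ and $\alpha_q>\alpha$, every exponent $\alpha_p,\alpha_q$ occurring in the remainders may be lowered to $\alpha$ (or capped at $1/3$ where $R2^N$ enters): a short bookkeeping shows the combined $E_0$-remainder decays like $N^{-\min(1/3,\alpha)}$ and the $\|j\|_q$-remainder like $N^{-\alpha/3}$, which are exactly the stated powers. Absorbing $K_{p,\Omega}(R_0/C_0^3)\|u^N-\bar u\|_p$ into the left-hand side under the smallness hypothesis on $R_0/C_0^3$, and using $\|\bar u\|_{2,q}\lesssim\|j\|_q$, closes the estimate through Lemma \ref{lemma_principal}.

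The step I expect to demand the most care is the bound on $R1^N$ (Remark \ref{rem_R1^N}) in the regime $\alpha<1$, where $p'\le 3$ is possible and the embedding $W^{2,p'}(\Omega)\hookrightarrow W^{1,\infty}(\Omega)$ used in Proposition \ref{prop4} fails. There the lifting $v=v_1+v_2$ of the boundary datum $w-w(x_j^N)$ must be controlled with only $w\in\mathcal{C}^{0,\alpha_p}(\bar\Omega)$ and $\nabla w\in W^{1,p'}(\Omega)\hookrightarrow L^{3p'/(3-p')}(\Omega)$ available. The Hölder-oscillation part $N^2\int_{A(x_j^N,R_0/N,2R_0/N)}|w-w(x_j^N)|^2$ is estimated by $(R_0/N)^{2\alpha_p}$ times the annulus volume $\sim N^{-3}$, while the genuine gradient part $\int_A|\nabla w|^2$ is handled by Hölder's inequality against $\|\nabla w\|_{L^{3p'/(3-p')}}$ on the thin disjoint annuli. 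The delicate point is to check that, after summation over the $N$ annuli and the Bogovskii and Poincaré--Wirtinger bounds of \cite{hillairet}, the oscillation term dominates the gradient term precisely because $p'<3$, so that $|R1^N|\lesssim E_0\|w\|_{2,p'}/N^{\alpha_p}$, hence $\lesssim E_0/N^{\min(1/3,\alpha)}$ after lowering $\alpha_p$ to $\alpha$; for $p'>3$ the original argument applies unchanged and gives the sharper rate $1/N$.
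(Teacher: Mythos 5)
Your proposal is correct and follows exactly the route the paper intends: Theorem \ref{theorem1_bis} is justified there precisely by rerunning the proof of Theorem \ref{theorem1} with the $\mathcal{C}^{0,\alpha}$-adapted estimates announced in Remarks \ref{rem_prop2}, \ref{rem_R1^N} and \ref{rem_prop4}, which is what you do. Your exponent bookkeeping (the equivalences $p<3/(1+\alpha)\Leftrightarrow\alpha_p>\alpha$ and $q>3/(2-\alpha)\Leftrightarrow\alpha_q>\alpha$, the unchanged $N^{-1/3}$ contribution of $R2^N$, and the oscillation-versus-gradient comparison for $R1^N$ when $p'<3$) is accurate and in fact supplies details that the paper delegates to the remarks without proof.
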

\begin{thm}\label{theorem2_bis}
Let $\alpha \in (0,1)$ and $p \in (1,3/(1+\alpha)).$ There exists a constant $K >0$ depending only on $R_0,C_0,p,\|\rho\|_{L^{\infty}(\Omega)},\Omega$ for which:
$$
\|E_\Omega[u^N]-\bar{u}\|_{L^p(\Omega)}\leq K \left[ \|j-j^N\|_{(\mathcal{C}^{0,\alpha}(\bar{\Omega}))^*}+ \left ( \|\rho-\rho^N \|_{(\mathcal{C}^{0,\alpha}(\bar{\Omega}))^*} + \frac{1}{N^{1/3}}  \right )^{1/3}E_0 \right],
$$
for $N \geq (4R_0/C_0)^{3/2}.$
\end{thm}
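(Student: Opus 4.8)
The plan is to repeat \emph{verbatim} the proof of Theorem \ref{theorem2}, replacing the embedding $W^{2,p'}(\Omega)\hookrightarrow \mathcal{C}^{0,1}(\bar{\Omega})$ by its H\"older analogue. The hypothesis on $p$ enters only through this embedding, so I would first record that for $p\in(1,3/(1+\alpha))$ one has $p'>3/(2-\alpha)$, hence $2-\tfrac{3}{p'}>\alpha$; since $\Omega$ is bounded this yields $W^{2,p'}(\Omega)\hookrightarrow \mathcal{C}^{0,\alpha}(\bar{\Omega})$ with $\|w\|_{\mathcal{C}^{0,\alpha}(\bar{\Omega})}\lesssim \|w\|_{2,p'}$. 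This single substitution propagates through every estimate. Invoking Lemma \ref{lemma_secondaire}, it then suffices to bound $|\int_\Omega \nabla v^N:\nabla w+\int_\Omega \rho\, v^N\cdot w|$ uniformly over divergence-free $w\in W^{2,p'}(\Omega)\cap W^{1,p'}_0(\Omega)$ with $\|w\|_{2,p'}=1$, where $v^N=E_\Omega[u^N]-\bar u$. For such $w$ the algebraic identity \eqref{eq_brinkmanfinale} holds unchanged.

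Next I would estimate the pieces of \eqref{eq_brinkmanfinale} with the $\alpha$-adapted propositions. The flux term is handled by Proposition \ref{prop2} through Remark \ref{rem_prop2}, which now produces $\|j-j^N\|_{(\mathcal{C}^{0,\alpha}(\bar{\Omega}))^*}$; the term $R2^N$ by Proposition \ref{prop42} unchanged, since its proof only uses $\|w\|_\infty\lesssim\|w\|_{2,p'}$ and gives the rate $E_0N^{-1/3}$; and $R1^N$ by Proposition \ref{prop4} through Remark \ref{rem_R1^N}, where the $\mathcal{C}^{0,\alpha}$-modulus of $w$ on the annuli of size $N^{-1}$ replaces the Lipschitz modulus and yields a rate $E_0N^{-\alpha_p}$ with $\alpha_p=\min(1,2-3/p')>\alpha$.

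The heart of the matter is the analogue of Proposition \ref{prop5}, which I expect to be the main obstacle. I would reproduce its two complementary bounds on $\langle \Pi_N-\Pi,\phi\rangle$: a regular one, in which $\rho^N-\rho$ is paired against the product $\phi\,w$ measured in $(\mathcal{C}^{0,\alpha}(\bar{\Omega}))^*$ — using that a product of two $\mathcal{C}^{0,\alpha}$ functions is again $\mathcal{C}^{0,\alpha}$ — while the averaging remainder $\oint_{A_j^N}(\phi-\phi(x_j^N))$ is controlled by the \emph{Lipschitz} norm of $\phi$; and an $L^2$ one, $|\langle\Pi_N-\Pi,\phi\rangle|\lesssim\|w\|_\infty\|\phi\|_2$, relying on $\|\rho\|_2\lesssim\|\rho\|_\infty$. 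Applying these with $\phi=E_\Omega[u^N]*\chi_\delta$ and $\phi=E_\Omega[u^N]-E_\Omega[u^N]*\chi_\delta$ respectively, and inserting $\|E_\Omega[u^N]*\chi_\delta\|_{\mathcal{C}^{0,1}}\lesssim\|E_\Omega[u^N]*\chi_\delta\|_{H^3}\lesssim\delta^{-2}E_0$ together with $\|E_\Omega[u^N]-E_\Omega[u^N]*\chi_\delta\|_2\lesssim\delta E_0$, I would optimise over $\delta$. The delicate point is that the averaging remainder is estimated through the Lipschitz norm of the smooth field $E_\Omega[u^N]*\chi_\delta$, so the displacement $|x-x_j^N|\lesssim N^{-1/3}$ on $A_j^N$ contributes $N^{-1/3}$ rather than $N^{-\alpha/3}$: this is precisely why the quantity to be cube-rooted is $\|\rho-\rho^N\|_{(\mathcal{C}^{0,\alpha}(\bar{\Omega}))^*}+N^{-1/3}$, and the choice $\delta=(\|\rho-\rho^N\|_{(\mathcal{C}^{0,\alpha}(\bar{\Omega}))^*}+N^{-1/3})^{1/3}$ produces the factor $(\cdots)^{1/3}E_0$.

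Finally I would collect the bounds: the flux contribution gives $\|j-j^N\|_{(\mathcal{C}^{0,\alpha}(\bar{\Omega}))^*}$, the Brinkman term gives the cube-root factor, and the purely-$N$ remainders from Propositions \ref{prop2}, \ref{prop4} and \ref{prop42} combine, for large $N$, into the same bracket. Dividing by $\|w\|_{2,p'}=1$, taking the supremum and applying Lemma \ref{lemma_secondaire} delivers the claimed estimate with a constant depending only on $R_0,C_0,p,\|\rho\|_{L^\infty(\Omega)},\Omega$.
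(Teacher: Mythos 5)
Your proposal is correct and coincides with the paper's own (only sketched) proof: Theorem \ref{theorem2_bis} is obtained there precisely by rerunning the proof of Theorem \ref{theorem2} with the embedding $W^{2,p'}(\Omega)\hookrightarrow\mathcal{C}^{0,\alpha}(\bar{\Omega})$ and Remarks \ref{rem_prop2}, \ref{rem_R1^N} and \ref{rem_prop4}, and you correctly isolate the one genuinely $\alpha$-sensitive point, namely that the mollification/interpolation argument of Proposition \ref{prop5} still produces the rate $N^{-1/3}$ (not $N^{-\alpha/3}$) inside the cube root, since the averaging remainder is paid for with the Lipschitz norm of the mollified field. The only caveat — shared with the paper, so not counted against you — is the final absorption of the remainder $E_0N^{-\alpha_p}$ from Remark \ref{rem_R1^N} into the bracket $\left(\cdots+N^{-1/3}\right)^{1/3}E_0\gtrsim E_0N^{-1/9}$: this requires $\alpha_p=3/p-1\geq 1/9$, which is automatic whenever $\alpha\geq 1/9$ but fails for $\alpha<1/9$ and $p$ close to $3/(1+\alpha)$, in which regime both your argument and the paper's yield the slightly weaker estimate containing an additional $E_0N^{-\alpha_p}$ term.
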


\appendix
\section{Fluid/Solid interaction}\label{force}
In this part, we assume that $\Omega=\mathbb{R}^3\setminus B(0,r)$ where $r>0$. Let $V\in \mathbb{R}^3$ be fixed in what follows.
We consider the unique pair $(u,\pi)$ solution to the Stokes problem:
\begin{equation} 
\left\{
\begin{array}{rcl}
- \Delta u + \nabla \pi &=& 0,  \\
{\div} u &= & 0 ,
\end{array}
\right.
\quad \text{ on $\Omega $},
\end{equation}
completed with boundary conditions:
\begin{equation}
u(x) = V  ,   \text{on $\partial B(0,r),$} \quad \underset{|x| \to \infty}{\lim} u(x)=0.
\end{equation}
We denote by $F$ the reaction force applied by the obstacle $B(0,r)$ on the fluid, it is defined as:
\begin{equation}
\label{force1}
F=\int_{\partial \Omega}(\nabla u + \nabla u^{\top} -\pi \mathbb{I}) \cdot n d\sigma. 
\end{equation}
The following lemma provides us an equivalent definition of $F$:
\begin{lemme}\label{fluide/solide}
Let $R_0 \geq r$, there holds:
$$F=\int_{\partial B(0,R_0)} [\nabla u -\pi\mathbb{I}] \cdot n d \sigma.$$
\end{lemme}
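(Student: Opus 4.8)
The plan is to exploit two structural facts about Stokes flows: that the \emph{non-symmetric} tensor $T := \nabla u - \pi\mathbb I$ is divergence free, and that the transpose contribution $\nabla u^{\top}n$ appearing in the physical stress vanishes \emph{pointwise} on the obstacle boundary because the data there is constant and the flow is incompressible. Granting these, the identity follows simply by transporting the flux of $T$ from the small sphere $\partial B(0,r)$ to the large one $\partial B(0,R_0)$ via the divergence theorem.

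First I would record the divergence-free property. With the convention $(\div M)_i = \sum_j \partial_j M_{ij}$ one computes $\div(\nabla u) = \Delta u$ and $\div(\pi\mathbb I) = \nabla\pi$, so that $\div T = \Delta u - \nabla\pi = 0$ by the first Stokes equation in \eqref{eq_stokesunbounded}. Since $(u,\pi)$ is smooth up to the boundary on the bounded annulus $A(0,r,R_0)$, the divergence theorem applies there without difficulty. Orienting $n = x/|x|$ radially on both spheres, and noting that the outward normal of the annulus equals $n$ on $\partial B(0,R_0)$ and $-n$ on $\partial B(0,r)$, I obtain $\int_{\partial B(0,R_0)} T n\,{\rm d}\sigma = \int_{\partial B(0,r)} T n\,{\rm d}\sigma$.

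The crux is then to compare $Tn$ with the full stress on $\partial B(0,r)$, that is, to show $\nabla u^{\top} n = 0$ there. Because $u = V$ is constant on $\partial B(0,r)$, every component $u_j$ has vanishing tangential gradient, so $\partial_k u_j = (\partial_n u_j)\,n_k$ on the sphere. Hence $(\nabla u^{\top} n)_i = \sum_j \partial_i u_j\, n_j = n_i \sum_j (\partial_n u_j)\, n_j = n_i\,(n\cdot\partial_n u)$, while the same normal-gradient structure gives $n\cdot\partial_n u = \sum_k (\partial_n u_k)\, n_k = \div u = 0$. Thus $\nabla u^{\top} n \equiv 0$ on $\partial B(0,r)$, so that on this sphere the physical stress and $T$ share the same normal trace, whence $\int_{\partial B(0,r)}(\nabla u + \nabla u^{\top} - \pi\mathbb I)n\,{\rm d}\sigma = \int_{\partial B(0,r)} T n\,{\rm d}\sigma = F$. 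Combining this with the flux identity of the previous step yields $F = \int_{\partial B(0,R_0)}(\nabla u - \pi\mathbb I)n\,{\rm d}\sigma$, which is the claim.

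I expect the only delicate point to be the bookkeeping: fixing a single orientation convention for $n$ across both spheres so that no stray sign survives, and invoking the divergence theorem, which is immediate since the solution is smooth on the closed annulus. The genuinely informative step is the pointwise vanishing of $\nabla u^{\top} n$ on the obstacle, where the constant boundary condition and $\div u = 0$ conspire; this is precisely what allows one to discard the symmetric part of the stress at the cost of moving the integration surface.
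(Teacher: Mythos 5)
Your proof is correct, and it takes a genuinely different route from the paper's. The paper argues by duality, in the same spirit as Lemmas \ref{lemma_principal} and \ref{lemma_secondaire}: it fixes $W\in\mathbb{R}^3$, chooses a compactly supported divergence-free test field $w$ with $w\equiv W$ on $B(0,R_0)$, extends $u$ by the constant $V$ inside the obstacle, and integrates by parts twice --- once to rewrite $F\cdot W$ as the bulk integral $\int_{\Omega}\nabla u:\nabla w+\nabla u:\nabla w^{\top}$, in which the transpose term is cancelled \emph{globally} using $\div w=0$, and once more on $\mathbb{R}^3\setminus B(0,R_0)$ to recover the flux through $\partial B(0,R_0)$. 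You instead establish two local facts: the tensor $T=\nabla u-\pi\mathbb{I}$ is divergence-free, so its flux is transported from $\partial B(0,r)$ to $\partial B(0,R_0)$ by the divergence theorem on the closed annulus (where $(u,\pi)$ is smooth); and $\nabla u^{\top}n\equiv 0$ \emph{pointwise} on $\partial B(0,r)$, since the constant Dirichlet datum forces each $\nabla u_j$ to be parallel to $n$ there, while incompressibility kills the remaining normal-normal component. This pointwise cancellation is a local refinement of the integrated cancellation the paper obtains by testing, and it makes the proof more elementary: no construction of the test field $w$, no extension of $u$, and a transparent explanation of why the symmetric part of the stress may be discarded at the price of moving the integration surface. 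What the paper's route buys is that it manipulates only integral identities, consistent with the variational framework used throughout the paper, whereas your argument invokes smoothness of $(u,\pi)$ up to the boundary to speak of pointwise traces of $\nabla u$ --- harmless here, since explicit formulas for the solution are available. Finally, both arguments require the two normals in the statement to be oriented consistently (the paper implicitly takes both pointing toward the origin, you take both pointing radially outward, which changes both sides by the same sign); your explicit handling of this bookkeeping is correct.
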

\begin{proof}
The aim is to prove that for arbitrary $W \in \mathbb{R}^3$:
$$F\cdot W =\int_{\partial B(0,R_0)} [(\nabla u -\pi\mathbb{I}) \cdot n] \cdot
 W d \sigma $$

Fix a vector-field $ w \in \mathcal{C}^\infty_c(\mathbb{R}^3)$ such that $ \div w=0$, $w=W$ on $B(0,R_0)$, extend $u$ by the value $V$ on $B(0,r)$ and still denote $u$ the extension for simplicity.
After integration by parts we obtain:
\begin{align*}
F \cdot W & = \int_{\mathbb{R}^3\setminus B(0,r)} \nabla u: {\nabla w} +\nabla u: {\nabla w}^{\top} \\
& = \int_{\mathbb{R}^3} \nabla  u: {\nabla w} \\
&= \int_{\mathbb{R}^3 \setminus B(0,R_0)} \nabla  u: {\nabla w}  \\
&=\int_{\partial B(0,R_0)} [(\nabla u - \pi \mathbb{I})\cdot n ] \cdot W{\rm d}\sigma \\
\end{align*}
As $\div u =0$ and $w=W$ on $B(0,r)$. 
\end{proof}
\medskip 



\begin{thebibliography}{100}

\bibitem{Allaire}
G.~Allaire.
\newblock {Homogenization of the {N}avier-{S}tokes equations in open sets
  perforated with tiny holes. {I}. {A}bstract framework, a volume distribution
  of holes.}
\newblock {\em Arch. Rational Mech. Anal.}, 113(3):209--259, 1990.

\bibitem{CioranescuMurat}
D.~Cioranescu and F.~Murat.
\newblock Un terme \'etrange venu d'ailleurs.
\newblock In {\em Nonlinear partial differential equations and their
  applications. {C}oll\`ege de {F}rance {S}eminar, {V}ol. {II} ({P}aris,
  1979/1980)}, volume~60 of {\em Res. Notes in Math.}, pages 98--138, 389--390.
  Pitman, Boston, Mass.-London, 1982.

\bibitem{BeliaevKozlov96}
A.~Yu. Beliaev and S.~M. Kozlov.
\newblock Darcy equation for random porous media.
\newblock {\em Comm. Pure Appl. Math.}, 49(1):1--34, 1996.



\bibitem{Bernardetalpp}
E. Bernard, L. Desvillettes, F. Golse, and V. Ricci.
\newblock A derivation of the vlasov-navier-stokes model for aerosol flows from
  kinetic theory.
\newblock \url{https://hal-polytechnique.archives-ouvertes.fr/hal-01350730}, August 2016.

\bibitem{BDMG}
L. Boudin, L. Desvillettes, C. Grandmont, and A. Moussa.
\newblock Global existence of solutions for the coupled {V}lasov and
  {N}avier-{S}tokes equations.
\newblock {\em Differential Integral Equations}, 22(11-12):1247--1271, 2009.




\bibitem{Desvillettes}
L.~Desvillettes, F.~Golse \and V.~Ricci.
\newblock {The mean field limit for solid particles in a Navier-Stokes flow}.
\newblock {\em J. Stat. Phys.} 131: 941-967, 2008.


\bibitem{Galdi}
G.~P. Galdi.
\newblock {An introduction to the mathematical theory of the
  {N}avier-{S}tokes equations}.
\newblock Springer Monographs in Mathematics. Springer, New York, second
  edition, 2011.

\bibitem{Hamdache98}
K.~Hamdache.
\newblock Global existence and large time behaviour of solutions for the
  {V}lasov-{S}tokes equations.
\newblock {\em Japan J. Indust. Appl. Math.}, 15(1):51--74, 1998.

\bibitem{HaurayJabin15}
M. Hauray and P.-E. Jabin.
\newblock Particle approximation of {V}lasov equations with singular forces:
  propagation of chaos.
\newblock {\em Ann. Sci. \'Ec. Norm. Sup\'er. (4)}, 48(4):891--940, 2015.

\bibitem{Morris}
E.~Guazzelli \and J.~F.~Morris
\newblock {A Physical Introduction To Suspension Dynamics}.
\newblock Cambridge Texts In Applied Mathematics. 2012

\bibitem{hillairet}
M.~Hillairet.
\newblock {\em On the homogenization of the Stokes problem in a perforated domain}.
\newblock \url{https://hal.archives-ouvertes.fr/hal-01302560}, August 2016.

\bibitem{LL}
L.~D. Landau and E.~M. Lifshitz.
\newblock {\em Fluid mechanics}.
\newblock Translated from the Russian by J. B. Sykes and W. H. Reid. Course of
  Theoretical Physics, Vol. 6. Pergamon Press, London, 1959.

\bibitem{Raymond07}
J.-P. Raymond.
\newblock Stokes and {N}avier-{S}tokes equations with nonhomogeneous boundary
  conditions.
\newblock {\em Ann. Inst. H. Poincar\'e Anal. Non Lin\'eaire}, 24(6):921--951,
  2007.

\bibitem{Rubinstein}
J. Rubinstein.
\newblock On the macroscopic description of slow viscous flow past a random
  array of spheres.
\newblock {\em J. Stat. Phys.}, 44(5-6):849--863, 1986.


\bibitem{Villani}
C. Villani.
\newblock {\em Optimal transport}, volume 338 of {\em Grundlehren der
  Mathematischen Wissenschaften [Fundamental Principles of Mathematical
  Sciences]}.
\newblock Springer-Verlag, Berlin, 2009.
\newblock Old and new.


\end{thebibliography}
\end{document}